\newfont{\cyr}{wncyr10 scaled 1100}
\theoremstyle{plain}
\newtheorem{theorem}{Theorem}[section]
\newtheorem{lemma}[theorem]{Lemma}
\newtheorem{proposition}[theorem]{Proposition}
\theoremstyle{definition}
\theoremstyle{remark}
\newtheorem{remark}[theorem]{Remark}
\newcommand{\Q}{\mathbb{Q}}
\newcommand{\Z}{\mathbb{Z}}
\newcommand{\F}{\mathbb{F}}
\newcommand{\C}{\mathbb{C}}
\newcommand{\GL}{\operatorname{GL}}
\newcommand{\ord}{{\operatorname{ord}}}
\newfont{\gotip}{eufb10 at 12pt}
\newcommand{\cO}{{\mathcal O}}
\newcommand{\R}{{\mathbb R}}
\newcommand{\M}{{\mathrm{M}}}
\newcommand{\p}{\mathfrak{p}}
\newcommand{\W}{\mathbb W}
\DeclareMathOperator{\Spec}{Spec}
\DeclareMathOperator{\Hom}{Hom} 
\newcommand{\X}{\mathbb X}
\newcommand{\Y}{\mathbb Y}
\newcommand{\U}{\mathbb U}
\newcommand{\fr}{\mathfrak}
\newcommand{\cl }{\mathcal}
\newcommand{\longmono}{\mbox{$\lhook\joinrel\longrightarrow$}}
\newcommand{\longepi}{\mbox{$\relbar\joinrel\twoheadrightarrow$}}
\newcommand{\smallmat}[4]{\bigl(\begin{smallmatrix}#1&#2\\#3&#4\end{smallmatrix}\bigr)}
\newcommand{\D}{\mathbb D}
\begin{document}

\title[Control theorems for quaternionic Hida families]{A note on control theorems for quaternionic Hida families of modular forms}
\author{Matteo Longo and Stefano Vigni}

\begin{abstract} 
We extend a result of Greenberg and Stevens (\cite{GS}) on the interpolation of modular symbols in Hida families to the context of non-split rational quaternion algebras. Both the definite case and the indefinite case are considered.  
\end{abstract}

\address{Dipartimento di Matematica Pura e Applicata, Universit\`a di Padova, Via Trieste 63, 35121 Padova, Italy}
\email{mlongo@math.unipd.it}
\address{Department of Mathematics, King's College London, Strand, London WC2R 2LS, United Kingdom}
\email{stefano.vigni@kcl.ac.uk}

\subjclass[2010]{11F11, 11R52}
\keywords{Quaternion algebras, Hida families, control theorems.}

\maketitle

\section{Introduction}\label{intro}

Fix an integer $N\geq1$ and a prime number $p\geq5$ not dividing $N$. Let $\X$ denote the set of primitive vectors in $\Y:=\Z_p^2$, i.e., the subset of $\Y$ consisting of those elements which are not divisible by $p$. Write $\tilde\D$ for the group of $\Z_p$-valued measures on $\Y$ and $\D$ for the direct summand of $\tilde\D$ consisting of those measures which are supported on $\X$. Then it is possible to introduce an $\M_2(\Z_p)$-action as well as a $\Z_p[\![\Z_p^\times]\!]$-module structure on $\D$ and $\tilde\D$. Define the $\Z_p$-module of $\D$-valued \emph{modular symbols} on $\Gamma_1(N)$ as 
\[ \W:={\rm Symb}_{\Gamma_1(N)}(\D):=\Hom_{\Gamma_1(N)}(\cl D_0,\D)\simeq H^1_{{\rm cpt}}(\Gamma_1(N)\backslash{\mathcal H},\D) \] 
where $\cl D_0$ is the subgroup of degree $0$ divisors on ${\rm Div}\bigl(\mathbb P^1(\Q)\bigr)$ and ${\mathcal H}$ is the complex upper half plane (for the above isomorphism, see \cite[Theorem 4.2]{GS}). Then $\W$ is endowed with a structure of $\Z_p[\![\Z_p^\times]\!]$-algebra as well as with a structure of Hecke module over the $\Z_p[\![\Z_p^\times]\!]$-Hecke algebra $\cl H$ defined in \cite[(1.6)]{GS}.

Let $\Lambda:=\Z_p[\![1+p\Z_p]\!]$ denote the Iwasawa algebra of $1+p\Z_p$ and let $\cl L:={\rm Frac}(\Lambda)$ be its fraction field. Let $\cl R$ denote the \emph{universal ordinary $p$-adic Hecke algebra} of tame level $N$ defined in \cite[Definition 2.4]{GS}; then we have a natural map of $\Lambda$-algebras $h:\cl H\rightarrow\cl R$. Let $\cl X^{\rm arith}$ denote the subset of 
$\cl X:=\Hom^{\rm cont}_{\Z_p}(\tilde{\cl R},\bar\Q_p)$ made up of the arithmetic points defined in \cite[Definition 2.4]{GS}, where $\tilde{\cl R}$ is the normalization of $\cl R$ in $\cl K:=\cl R\otimes_\Lambda\cl L$. 
For any arithmetic point $\kappa\in\cl X^{\rm arith}$ we can consider the localization $\cl R_{(\kappa)}$ of $\cl R$ at $\kappa$. Define $\W_{\cl R_{(\kappa)}}:=\W\otimes_\Lambda{\cl R_{(\kappa)}}$ and denote $h_{(\kappa)}:\cl H\rightarrow\cl R_{(\kappa)}$ the composition of $h$ with the localization map. Let $\W_{(\kappa)}$ denote the $h_{(\kappa)}$-eigenmodule in $\W_{\cl R_{(\kappa)}}$. Finally, noticing that the matrix $\iota:=\smallmat {-1}001$ induces an involution on $\W$, we get a decomposition $\W_{\cl R_{(\kappa)}}=\W_{\cl R_{(\kappa)}}^+\oplus \W_{\cl R_{(\kappa)}}^-$, where $\iota$ acts on $\W_{\cl R_{(\kappa)}}^\epsilon$ as multiplication by $\epsilon$ for $\epsilon=\pm1$. 

With any $\kappa\in\cl X^{\rm arith}$ we can associate an ordinary $p$-stabilized newform $f_\kappa\in S_k(\Gamma_1(Np),\bar\Q_p)$ of tame conductor $N$ (see \cite[Definition 2.5 and Theorem 2.6]{GS}). Let $F_\kappa$ be the (finite) extension of $\Q_p$ generated by the Fourier coefficients of $f_\kappa$. Then we may consider, for any choice of sign $\pm$, the modular 
symbol  
\[ \Phi_{f_\kappa}^\pm\in {\rm Symb}_{\Gamma_1(Np)}\bigl(L_{k-2}(F_\kappa)\bigr):=\Hom_{\Gamma_1(Np)}\bigl(\cl D_0,L_{k-2}(F_\kappa)\bigr). \] 
Here, for any field $F$ and any integer $n\geq 0$, the symbol $L_n(F)$ denotes the $F$-vector space of homogeneous polynomials of degree $n$ endowed with the right action of $\GL_2(F)$ given by $(f|g)(X,Y):=f((X,Y)g^*)$, where $g^*:=\det(g)g^{-1}$ for $g\in\GL_2(F)$. Recall that $\Phi_{f_\kappa}^\pm$ generates the $1$-dimensional $F_\kappa$-subspace $\W_\kappa^\pm$ of 
${\rm Symb}_{\Gamma_1(Np)}(L_{k-2}(F_\kappa))$ on which complex conjugation acts as $\pm1$ and the Hecke algebra acts via the character associated with $f_\kappa$. 

The $\cl R_{(\kappa)}$-modules $\W_{(\kappa)}^\pm$ and $\W_\kappa^\pm$ are connected by a \emph{specialization map} 
\[ \phi_{\kappa,*}:\W_{(\kappa)}^\pm\longrightarrow\W_\kappa^\pm \] 
(see \cite[Definition 5.6]{GS}) deduced from the map $\phi_\kappa:\D\rightarrow L_{k-2}(\bar\Q_p)$ defined by the integration formula 
\[ \mu\longmapsto\int_{\Z_p^\times\times\Z_p}\epsilon(x)(xY-yX)^{k-2}d\mu(x,y), \]
where $\kappa$ has character $\epsilon$ and weight $k$.  The result we are interested in is \cite[Theorem 5.13]{GS}, which can be stated as follows.

\begin{theorem}[Greenberg--Stevens] \label{GS} 
For any $\kappa\in\cl X^{\rm arith}$ and any choice of sign $\epsilon\in\{\pm1\}$ the map $\phi_{\kappa,*}$ induces an isomorphism
\[ \phi_{\kappa,*}:\W^\epsilon_{(\kappa)}\big/P_\kappa\W^\epsilon_{(\kappa)}\overset\simeq\longrightarrow\W^\epsilon_\kappa \]
where $P_\kappa\subset\Z_p[\![\Z_p^\times]\!]$ is the kernel of $\kappa$.
\end{theorem}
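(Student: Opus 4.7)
The strategy is to split the assertion into surjectivity and injectivity of $\phi_{\kappa,*}$ modulo $P_\kappa$, treating the two by quite different means.

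First I would verify that $\phi_{\kappa,*}$ indeed kills $P_\kappa\W^\epsilon_{(\kappa)}$, which is built into the integration formula: the character $\epsilon$ of $\Z_p^\times$ forces the $\phi_\kappa$-image of any $P_\kappa$-multiple to vanish, since $P_\kappa=\ker(\kappa)$ acts trivially on the $\epsilon$-isotypic component. Hence the map on the quotient is well defined.

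For surjectivity, recall that $\Phi^\epsilon_{f_\kappa}$ spans the one-dimensional eigenspace $\W^\epsilon_\kappa$. By Hida's interpolation, $f_\kappa$ lies in a Hida family parametrized by $\cl R$, and the associated $\Lambda$-adic modular symbol provides an element of $\W^\epsilon_{(\kappa)}$ whose image under $\phi_{\kappa,*}$ is a nonzero multiple of $\Phi^\epsilon_{f_\kappa}$, yielding surjectivity.

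The heart of the proof is injectivity, which I would deduce from the freeness of $\W^\epsilon_{(\kappa)}$ as a rank-one $\cl R_{(\kappa)}$-module. Granting this, the equality $\cl R_{(\kappa)}/P_\kappa\cl R_{(\kappa)}=F_\kappa$ at an arithmetic point and a Nakayama argument force $\W^\epsilon_{(\kappa)}/P_\kappa\W^\epsilon_{(\kappa)}$ to be one-dimensional over $F_\kappa$, so that the surjection $\phi_{\kappa,*}$ modulo $P_\kappa$ must be an isomorphism onto the one-dimensional $\W^\epsilon_\kappa$. To prove rank-one freeness---the main obstacle---I would consider the short exact sequence of $\GL_2(\Z_p)$-modules
\[
0\to\ker(\phi_\kappa)\to\D\xrightarrow{\phi_\kappa}L_{k-2}(F_\kappa)\to 0,
\]
pass to the induced long exact sequence in $\Gamma_1(Np)$-cohomology (i.e.\ in modular symbols), apply the ordinary idempotent $e=\lim U_p^{n!}$, and localize at $\kappa$. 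The key technical point is the vanishing of the ordinary, $\kappa$-localized modular symbols with values in $\ker(\phi_\kappa)$: concretely, elements of $\ker(\phi_\kappa)$ are measures whose $\epsilon$-twisted weight-$(k-2)$ moments vanish, and the $U_p$-action on this kernel has no ordinary component surviving localization at the closed point $\kappa$ of $\cl R$. Combined with Hida's control theorem for the ordinary Hecke algebra $\cl R$, this vanishing produces the desired rank-one freeness and completes the argument.
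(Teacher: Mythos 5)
This statement is quoted from Greenberg--Stevens (\cite[Theorem 5.13]{GS}) and not reproved here; the paper's own proofs are of the quaternionic analogues (Theorems \ref{control-thm} and \ref{control-thm-def}), which follow the Greenberg--Stevens strategy. Measured against that strategy, your proposal has two genuine gaps. The first is surjectivity: you invoke ``the associated $\Lambda$-adic modular symbol'' of the Hida family through $f_\kappa$ and assert that it specializes to a nonzero multiple of $\Phi^\epsilon_{f_\kappa}$. But the existence of a measure-valued class with nonvanishing specialization is precisely the surjectivity half of the theorem; Hida's interpolation gives a family of Hecke eigensystems, not a lift of $\Phi^\epsilon_{f_\kappa}$ to $\W^\epsilon_{(\kappa)}$, so this step is circular. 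The paper (following \cite{GS}) argues in the opposite order: one first computes the kernel of the specialization map on the ordinary part exactly, obtaining injectivity of the induced map on the quotient, and then obtains surjectivity from a dimension count --- $\W^\epsilon_{(\kappa)}$ is free of rank one over $\cl R_{(\kappa)}$, so source and target are both one-dimensional $F_\kappa$-vector spaces and an injection is forced to be an isomorphism. The freeness is where Hida's control theorem for the Hecke algebra, Frobenius-algebra duality and the Eichler--Shimura/Matsushima--Shimura computation enter (see Propositions \ref{7.2-hida} and \ref{prop-2} for the quaternionic version); it is not obtained from a long exact sequence in coefficients.

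The second gap is your ``key technical point.'' The asserted vanishing of the ordinary, $\kappa$-localized modular symbols with values in $\ker(\phi_\kappa)$ is false: one has $P_\kappa\D\subset\ker(\phi_\kappa)$, and since $P_\kappa$ is principal, ${\rm Symb}_{\Gamma_1(N)}(P_\kappa\D)=P_\kappa\W$, whose ordinary part localized at $\kappa$ is $P_\kappa\W^{\ord}_{(\kappa)}\neq0$ by Nakayama; this nonzero submodule lies in the image of the first term of your long exact sequence. What must actually be proved is that the kernel of $\phi_{\kappa,*}$ on $\W^{\ord}_{(\kappa)}$ is \emph{exactly} $P_\kappa\W^{\ord}_{(\kappa)}$ and no larger. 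The hard inclusion $\ker\subset P_\kappa\W^{\ord}$ is the technical heart of the argument: one characterizes $P_\kappa\D$ as the measures annihilated by \emph{all} functions on $\X$ homogeneous of degree $\kappa$ (Lemma \ref{lemma-integration-II}, after \cite[Lemma 6.3]{GS}), and then uses the invertibility of $U_p$ on the ordinary part together with the coset decomposition of $T_p^m$ to propagate the vanishing of the single integral defining $\phi_\kappa$ (supported on $\Z_p^\times\times\Z_p$) to vanishing against all such homogeneous functions, via the translates $\psi_{m,\kappa}\circ g_{m,i}^*$ covering $\X$ (this is the content of Lemmas \ref{lemmaA2}--\ref{prop-kernel} in the quaternionic setting). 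None of this mechanism appears in your sketch, and the long exact sequence cannot substitute for it because its relevant term does not vanish. (Two smaller points: the sequence $0\to\ker(\phi_\kappa)\to\D\to L_{k-2}(F_\kappa)\to0$ is not exact on the right as written, since the image of $\phi_\kappa$ is only a lattice; and in your well-definedness paragraph you conflate the sign $\epsilon\in\{\pm1\}$ of the archimedean involution with the finite-order character $\epsilon$ of $\kappa$ --- the correct reason $\phi_{\kappa,*}$ kills $P_\kappa\W^\epsilon_{(\kappa)}$ is that the integrand is homogeneous of degree $\kappa|_{\Z_p^\times}$.)
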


It is worth remarking that a generalization of this result to Hilbert modular forms over totally real fields was proved in \cite[Theorem 3.7]{BL}. 

The aim of the present paper is to extend Theorem \ref{GS} to the context of quaternion algebras over $\Q$ (the reader can find a dictionary between classical Hida families and their quaternionic counterparts in \cite[Sections 5 and 6]{LV}). Although Hida in \cite{hida} does not distinguish between the case of definite quaternion algebras and the case of indefinite quaternion algebras, we prefer to keep these two settings separate. The reason for doing so is that the natural substitutes for $\W$ look quite different in the two cases, and some arguments in the definite case are simpler than the corresponding ones in the indefinite case. The price for this choice is that similar arguments are repeated twice, while the advantage is that the exposition becomes clearer and one can read each of the two parts independently. 

The main result that we obtain can be described as follows. Let $B$ be a quaternion algebra over $\Q$ of discriminant the square-free integer $D>1$. Fix an Eichler order $R$ of $B$ of level $M$ prime to $D$ and let $p$ be a prime not dividing $MD$. Fix also an ordinary $p$-stabilized eigenform $f$ of level $\Gamma_1(MDp)$ and weight $k$, and write $F_f$ for the field generated over $\Q_p$ by its Fourier coefficients, whose ring of integers will be denoted $\mathcal O_f$. For simplicity, we assume that the $p$-adic representation attached to $f$ is residually absolutely irreducible and $p$-distinguished. For every prime $\ell|M$ choose an isomorphism $i_\ell:B\otimes_\Q\Q_\ell\simeq\M_2(\Q_\ell)$ such that $i_\ell(R\otimes_\Z\Z_\ell)$ is the subgroup of upper triangular matrices modulo $\ell^{{\rm ord}_\ell(M)}$. Moreover, choose $i_p: B\otimes_\Q\Q_p\simeq\M_2(\Q_p)$ such that $i_p(R\otimes_\Z\Z_p)=\M_2(\Z_p)$. Define  
\[ \W:=\begin{cases}H^1(\Gamma_0,\D)&\text{if $B$ is indefinite},\\[2mm]S_2(U_0,\D)&\text{if $B$ is definite},\end{cases} \] 
where the notations are as follows:
\begin{itemize} 
\item $\D$ is the $\mathcal O_f$-module of $\mathcal O_f$-valued measures on $\Y$ which are supported on $\X$; 
\item $\Gamma_0$ is a finite index subgroup of the group $R^\times_1$ of norm $1$ elements in $R^\times$, containing the subgroup 
of $R^\times_1$ consisting of the elements $\gamma$ such that $i_\ell(\gamma)\equiv\smallmat 1{*}01\pmod{\ell^{{\rm ord}_\ell(M)}}$ for all primes $\ell|M$; 
\item $U_0$ is a finite index subgroup of $\hat R^\times$ containing the subgroup of $\hat R^\times$ consisting of the elements $u=(u_\ell)_\ell$ such that $i_\ell(u_\ell)\equiv \smallmat 1{*}01\pmod{\ell^{{\rm ord}_\ell(M)}}$ for all primes $\ell|M$;
\item $S_2(U_0,\D)$ is the $\mathcal O_f$-module of $\D$-valued modular forms of weight $2$ and level $U_0$ on $\hat B^\times$ (see \S \ref{definite-forms-subsec}).  
\end{itemize}  
To state our main result, we introduce the following notations, which slightly differ from those used before. Let $\mathcal R$ denote the integral closure of $\Lambda$ in the primitive component $\mathcal K$ of ${\mathfrak h_\infty^{D,\ord}}\otimes_\Lambda\mathcal L$ corresponding to $f$, where now $\mathfrak h_\infty^\ord$ is the $p$-ordinary Hecke algebra of level $\Gamma_0$ (indefinite case) or $U_0$ (definite case) with coefficients in $\mathcal O_f$ associated with $B$ (see \S \ref{App1} and \S \ref{App1-def} for the relevant definitions). Let $\mathcal A(\mathcal R)$ denote the set of arithmetic homomorphisms in $\Hom(\mathcal R,\bar\Q_p)$ (this notion is introduced in \S \ref{sec-arith-points}). A point $\kappa\in\mathcal A(\mathcal R)$ corresponds to a normalized eigenform $f_\kappa$; write $F_\kappa$ for the field generated over $\Q_p$ by the Fourier coefficients of $f_\kappa$. For any $\kappa\in\mathcal A(\mathcal R)$ define 
\[ \W_\kappa:=\begin{cases}H^1\bigl(\Gamma_r,V_{k_\kappa-2}(F_\kappa)\bigr)^{f_\kappa}&\text{if $B$ is indefinite},\\[2mm]
S_2(U_r,F_\kappa)^{f_\kappa}&\text{if $B$ is definite},\end{cases} \] 
where
\begin{itemize} 
\item the superscript $f_\kappa$ denotes the subspace on which the Hecke algebra acts via the character associated with $f_\kappa$;
\item $\Gamma_r$ is the subgroup of $\Gamma_0$ consisting of the elements $\gamma$ such that $i_p(\gamma)\equiv\smallmat 1{*}01\pmod{p^r}$; 
\item $U_r\subset U_0$ is the compact open subgroup of $\hat B^\times$ whose $p$-component is isomorphic via $i_p$ to the group of matrices in $\GL_2(\Z_p)$ congruent to $\smallmat 1{*}0{*}$ modulo $p^r$; 
\item $S_2(U_r,F_\kappa)$ is the $F_\kappa$-vector space of modular forms of weight $2$ and level $U_r$.  
\end{itemize} 
For any field $F$ we may define \emph{specialization maps}  
\[ \rho_{k-2,\epsilon}:\D\longrightarrow V_{k-2}(F) \]
by the formulas
\[ \rho_{k-2,\epsilon}(\nu)(P):=\begin{cases}\int_{\Z_p\times\Z_p^\times}\epsilon(y)P(x,y)d\nu&\text{if $B$ is indefinite},\\[2mm]
\int_{\Z_p^\times\times p\Z_p}\epsilon(x)P(x,y)d\nu&\text{if $B$ is definite}.\end{cases} \] 
For any $\kappa\in\mathcal A(\mathcal R)$ of weight $k_\kappa$ and character $\epsilon_\kappa$ (see \S \ref{sec-arith-points} for 
definitions) we may consider the map $\rho_{k_\kappa-2,\epsilon_\kappa}$ which induces maps: 
\[\rho_\kappa:\W^\ord\longrightarrow \W^\ord_{\kappa}.\] 
Here $\W^\ord$ and $\W^\ord_\kappa$ denote the \emph{ordinary submodules} of $\W$ and $\W_\kappa$, respectively, defined as in \cite[Definition 2.2]{GS} (see also \S \ref{sec3.5} and \S \ref{W-def}). Finally, in this case too there is a universal Hecke algebra ${\mathfrak h^D_{\rm univ}}$ equipped with a canonical morphism $h:{\mathfrak h^D_{\rm univ}}\rightarrow\mathfrak h_\infty^\ord$. For any $\kappa\in\mathcal A(\mathcal R)$ let $P_\kappa$ denote its kernel and $\mathcal R_{P_\kappa}$ the localization of $\mathcal R$ at $P_\kappa$ (note the slight notational change with respect to \cite{GS}). Let $\W^\ord_{h_\kappa}$ be the $h_{\kappa}$-submodule in $\W^\ord$, where $h_{\kappa}:{\mathfrak h^D_{\rm univ}}\rightarrow\mathcal R_{P_\kappa}$ is the composition of $h$ with the localization map $\mathcal R\rightarrow\mathcal R_{P_\kappa}$. 

\begin{theorem} \label{main-intro-thm}
For any $\kappa\in\mathcal A(\mathcal R)$ the specialization map defines an isomorphism
\[ \W_{h_\kappa}^\ord\big/P_\kappa\W^\ord_{h_\kappa}\overset{\simeq}{\longrightarrow} \W_\kappa^\ord. \] 
\end{theorem}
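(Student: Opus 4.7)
The plan is to adapt the Greenberg--Stevens argument of \cite{GS} to the quaternionic setting, running the proof in parallel in the indefinite and definite cases. The strategy splits into a coefficient-level analysis of the specialization map $\rho_{k_\kappa-2,\epsilon_\kappa}$, an application of the appropriate cohomological or automorphic functor, and the extraction of the ordinary eigenpart via Hida's projector and localization at $P_\kappa$.

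First, I would exhibit a short exact sequence of coefficient modules (compatible with the $\Gamma_0$- or $U_0$-action)
\[
0 \longrightarrow \D \xrightarrow{\,\cdot\varpi_\kappa\,} \D \xrightarrow{\,\rho_{k_\kappa-2,\epsilon_\kappa}\,} V_{k_\kappa-2}(F_\kappa) \longrightarrow 0,
\]
where $\varpi_\kappa$ is a uniformizer of the height-one prime of $\Lambda$ lying below $P_\kappa$. Surjectivity and the identification of the kernel with $\varpi_\kappa\D$ follow from a Mahler/Amice-type analysis of measures on $\X\subset\Y$: integrating against a polynomial in $V_{k_\kappa-2}(F_\kappa)$ over the open $\Z_p\times\Z_p^\times$ (indefinite) or $\Z_p^\times\times p\Z_p$ (definite) reads off a finite piece of the Mahler expansion of $\mu$, which after twisting by $\epsilon_\kappa$ identifies $\D/\varpi_\kappa\D$ with $V_{k_\kappa-2}(F_\kappa)$.

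Next, apply the appropriate functor. In the definite case $S_2(U_0,-)$ is exact in the coefficient module (being a finite product of $U_0$-invariants on the finite double coset space $B^\times\backslash\hat B^\times/U_0$), so one obtains immediately a short exact sequence. In the indefinite case $H^\bullet(\Gamma_0,-)$ yields a long exact sequence involving $H^0$, $H^1$, $H^2$; one then applies Hida's ordinary projector $e$. The heart of the argument here is the vanishing, after localization at $P_\kappa$ and restriction to the $f_\kappa$-eigenpart, of both
\[
e\cdot H^0\bigl(\Gamma_0, V_{k_\kappa-2}(F_\kappa)\bigr) \quad\text{and}\quad e\cdot H^2(\Gamma_0,\D).
\]
The first vanishes because $U_p$ acts nilpotently on $H^0$ with polynomial coefficients; the second follows by Poincaré duality for the cocompact arithmetic group $\Gamma_0$ (the associated Shimura curve is compact since $D>1$, so $\Gamma_0$ has cohomological dimension $2$ with no cuspidal contribution) together with a dual vanishing of the ordinary part of $H^0$ with values in the Pontryagin dual of $\D$. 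The residual irreducibility and $p$-distinguishedness hypotheses on $f$ enter here to isolate the $f_\kappa$-eigenspace cleanly.

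Finally, localize the resulting short exact sequence at $P_\kappa$ and restrict to the $h_\kappa$-eigenpart; both are exact operations, yielding
\[
0 \longrightarrow \W^\ord_{h_\kappa} \xrightarrow{\,\cdot\varpi_\kappa\,} \W^\ord_{h_\kappa} \xrightarrow{\,\rho_\kappa\,} \W^\ord_\kappa \longrightarrow 0.
\]
Since $\mathcal R_{P_\kappa}$ is a discrete valuation ring with uniformizer $\varpi_\kappa$, one has $P_\kappa\mathcal R_{P_\kappa}=\varpi_\kappa\mathcal R_{P_\kappa}$, so quotienting by $\varpi_\kappa$ (equivalently, by $P_\kappa$) gives the desired isomorphism. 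The main obstacle is the indefinite case, specifically the control of the boundary $H^2$ term after projection to the $f_\kappa$-eigencomponent; a secondary but nonempty technicality is verifying the Hecke (especially $U_p$) equivariance of $\rho_{k_\kappa-2,\epsilon_\kappa}$ so that the whole long exact sequence is one of Hecke modules and the ordinary projector can be applied termwise.
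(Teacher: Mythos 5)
Your proposed coefficient-level short exact sequence
\[ 0 \longrightarrow \D \xrightarrow{\,\cdot\varpi_\kappa\,} \D \xrightarrow{\,\rho_{k_\kappa-2,\epsilon_\kappa}\,} V_{k_\kappa-2}(F_\kappa) \longrightarrow 0 \]
is false, and this breaks the whole argument at its first step. The specialization map integrates only over the proper open subset $\Z_p\times\Z_p^\times$ (indefinite case) or $\Z_p^\times\times p\Z_p$ (definite case) of $\X$, so its kernel on $\D$ is strictly larger than $\varpi_\kappa\D=P_\kappa\D$: for instance, a Dirac measure at a point of $\X$ outside the domain of integration is annihilated by $\rho_{k_\kappa-2,\epsilon_\kappa}$ but pairs nontrivially with homogeneous functions of degree $\kappa$, hence does not lie in $P_\kappa\D$ (by the characterization of $P_\kappa\D$ via vanishing of all such integrals over the \emph{whole} of $\X$, which is the content of Lemma \ref{lemma-integration-II}). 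The identification of the kernel with $P_\kappa$-torsion is not a statement about coefficients at all: it only becomes true after passing to $\W$ \emph{and} restricting to the ordinary part, because it is precisely the invertibility of $U_p$ on $\W^\ord$ that lets one write $\boldsymbol\Phi=\boldsymbol\Psi|T_p^m$ and recover the integrals of $\boldsymbol\Phi$ against every test function $\psi_{m,\kappa}$ supported on $\U(m)\subset\X$ from the integrals of $\boldsymbol\Psi$ over the small window (only the coset representative $\smallmat 100{p^m}$ contributes, since $g_{m,i}^*\X\cap\X=\emptyset$ otherwise). This is the mechanism of Lemmas \ref{lemmaA2}--\ref{prop-kernel} (and Lemma \ref{prop-kernel-def}), and no long exact sequence in $\Gamma_0$-cohomology can substitute for it, since there is no $\Gamma_0$-equivariant coefficient sequence to feed into it.

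A second, independent divergence: the paper does not obtain surjectivity from vanishing of boundary terms ($e\cdot H^0$ with polynomial coefficients, $e\cdot H^2(\Gamma_0,\D)$ via Poincar\'e duality). Instead it proves injectivity of $\rho_\kappa$ modulo $P_\kappa$ directly and then closes the argument with a dimension count: by Hida's freeness theorem (\cite[Corollary 10.4]{hida}) together with Proposition \ref{7.2-hida} (resp.\ Proposition \ref{prop3.3}), the module $\W^\ord_{\mathcal K}$ is free of rank $2$ (resp.\ $1$) over $\mathcal K$, so the source of $\rho_\kappa$ has dimension at least that of the target, which equals $2$ (resp.\ $1$) by Matsushima--Shimura and the Jacquet--Langlands correspondence. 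Your route would additionally require justifying that passage to the $h_\kappa$-eigenpart (a tensor product over $\mathfrak h^D_{\rm univ}$, hence only right exact) preserves exactness, and in the definite case that $S(U_0,-)$ is exact despite the finite stabilizers in $B^\times\backslash\hat B^\times/U_0$; neither point is addressed. To repair the proof you would need to abandon the coefficient sequence and reinstate the ordinary-part kernel computation as the central step.
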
 

Related results in the context of Coleman families are available: see the article \cite{Chenevier} by Chenevier (definite case) and the paper \cite{AS-new} by Ash and Stevens. However, in this note we avoid using locally analytic distributions because we work in the simpler setting of ordinary deformations, where we can offer a more explicit and detailed version of this result. 
In fact, this was one of our motivations for writing this paper. 

The above result is a combination of Theorem \ref{control-thm} (indefinite case) and Theorem \ref{control-thm-def} (definite case) and was crucially applied in \cite{LV-darmon} to obtain rationality results for quaternionic Darmon points on elliptic curves. In the indefinite case a more precise version of Theorem \ref{main-intro-thm} can be stated, taking into account the action of the archimedean involution as in Theorem \ref{GS}. We also observe that in the definite case the above result generalizes \cite[Theorem 2.5]{BD} and, actually, provides a full proof of it (in fact, a proof was only briefly sketched in \cite{BD}).

We caution the reader that some of the notations adopted in the main body of the paper may slightly differ from those used in this introduction. For example, as noticed above, in the sequel we use the symbol $\mathcal R$ to denote a single component of the universal ordinary Hecke algebra appearing in Theorem \ref{GS}. Furthermore, localizations of $\mathcal R$ at arithmetic points $\kappa$ are denoted $\mathcal R_{P_\kappa}$ instead of $\mathcal R_{(\kappa)}$ (the latter being the symbol used in \cite{GS}). However, every piece of notation will be carefully defined, and we are confident that no confusion will arise. 
\vskip 2mm
\noindent\emph{Convention.} Throughout the paper we fix field embeddings $\bar\Q\hookrightarrow\bar\Q_p$ and $\bar\Q_p\hookrightarrow\C$.

\section{The indefinite case} \label{indefinite-case} 

In this section $B$ is an \emph{indefinite} quaternion algebra over $\Q$, whose discriminant $D\geq1$ is then a square-free product of an \emph{even} number of primes (if $D=1$ then $B\simeq\M_2(\Q)$). 

\subsection{Hecke algebras} \label{App1}

Let $\mathcal G$ be a group. For any subgroup $G\subset\mathcal G$ and any subsemigroup $S\subset\mathcal G$ such that $(G,S)$ is a Hecke pair in the sense of \cite[\S 1.1]{AS} we denote $\mathcal H(G,S)$ the Hecke algebra (over $\Z$) of the pair $(G,S)$, whose elements are combinations with integer coefficients of double cosets $T(s):=GsG$ for $s\in S$. If $\mathcal G=B^\times$ let $g\mapsto g^*:={\rm norm}(g)g^{-1}$ denote the main involution of $B^\times$, where ${\rm norm}:B^\times\rightarrow\Q$ is the norm map. Similarly, for any $S\subset B^\times$ as above let $S^*$ denote the image of $S$ under $g\mapsto g^*$. If $M$ is a left $\Z[S^*]$-module then the group $H^1(G,M)$ has a natural right action of $R(G,S)$ defined as follows. For $s\in S$ write $GsG=\coprod Gs_i$, then define functions $t_i:G\rightarrow G$ by the equations $Gs_i\gamma=Gs_j$ (for some $j$) and $g_i\gamma=t_i(\gamma)g_j$. The action on $H^1(G,M)$ is given at the level of cochains $c\in Z^1(G,M)$ by the formula
\[ \bigl(c|T(s)\bigr)(\gamma)=\sum_is_i^*c\bigl(t_i(\gamma)\bigr). \]
Fix a maximal order $R^{\rm max}$ in $B$. For every prime number $\ell\nmid D$ fix also an isomorphism of $\Q_\ell$-algebras
\[ i_\ell:B\otimes_\Q\Q_\ell\simeq\M_2(\Q_\ell) \]
in such a way that $i_\ell(R^{\rm max}\otimes_\Z\Z_\ell)=\M_2(\Z_\ell)$. For $x\in B$, we will occasionally write $i_\ell(x)$ in place of $i_\ell(x\otimes1)$. Fix also an integer $M\geq1$ prime to $D$ and a prime $p$ such that $p\nmid MD$. For any integer $r\geq0$ write $R_0^D(Mp^r)$ for the Eichler order of level $Mp^r$ contained in $R^{\rm max}$ and defined by the condition that $i_\ell(R_0^D(Mp^r)\otimes\Z_\ell)=R_\ell^{\rm loc}({\rm ord}_\ell(Mp^r))$ for all primes $\ell|Mp^r$ where, for every integer $n\geq 0$ and every prime $\ell$, we denote $R_\ell^{\rm loc}(n)$ the order of $\M_2(\Z_\ell)$ consisting of the matrices $\smallmat abcd$ with $c\equiv 0\pmod{\ell^n}$. Moreover, let $\Gamma_0^D(Mp^r)$ be the group of norm $1$ elements of $R_0^D(Mp^r)$ and let $\Gamma_r$ be the subgroup of $\Gamma_0^D(Mp^r)$ consisting of those $\gamma$ such that $i_\ell(\gamma)=\smallmat abcd$ with $a\equiv 1\pmod{Mp^r}$.  

For a prime $\ell\nmid D$ let $\Sigma_\ell^{\rm loc}$ denote the semigroup of elements in $R^{\rm max}\otimes\Z_\ell$ with non-zero norm, and for a prime $\ell|Mp$ and an integer $n\geq0$ let $\Sigma_\ell^{\rm loc}(\ell^n)\subset\Sigma_\ell^{\rm loc}$ be the inverse image under $i_\ell$ of the semigroup of matrices $\smallmat abcd\in\GL_2(\Q_\ell)\cap\M_2(\Z_\ell)$ with $a\equiv1\pmod{\ell^n}$ and $c\equiv0\pmod{\ell^n}$ (so $\Sigma_\ell^{\rm loc}(0)=\Sigma_\ell^{\rm loc}$). Then for every integer $r\geq0$ define the semigroups  
\[ \Sigma_r:=B^\times\cap\Bigg(\prod_{\ell|Mp}\Sigma_\ell^{\rm loc}({\rm ord}_\ell(Mp^r))\times\prod_{\ell\nmid Mp}\Sigma_\ell^{\rm loc}\Bigg) \]
and
\[ \Delta_r:=B^\times\cap\bigg(\Sigma_p^{\rm loc}(r)\times\prod_{\ell\neq p}\Sigma_\ell^{\rm loc}\bigg). \]
Finally, set $\Sigma_r^+:=\Sigma_r\cap B^+$ and $\Delta_r^+:=\Delta_r\cap B^+$ where $B^+$ is the subgroup of elements in $B^\times$ of positive norm. 

For every integer $n\geq1$ there is a Hecke operator $T_n=\sum_iT(\alpha_i)$ in $\mathcal H(\Gamma_r,\Sigma_r^+)$ and $\mathcal H(\Gamma_r,\Delta_r^+)$, where the sum is taken over all double cosets of the form $\Gamma_r\alpha_i\Gamma_r$ with $\alpha_i\in\Sigma_r^+$ and ${\rm norm}(\alpha_i)=n$. If $r\geq1$ we denote the Hecke operator $T_p$ by $U_p$. We also have operators $T_{n,n}\in\mathcal H(\Gamma_r,\Sigma_r^+)$ and $\mathcal H(\Gamma_r,\Delta_r^+)$ for integers $n\geq1$ prime to $MDp^r$, defined as follows. For every $n\in\Z$ with $(n,MDp^r)=1$ choose $\gamma_n\in\Gamma_0^D(Mp^r)$ such that $i_\ell(\gamma_n)\equiv\smallmat**0n\pmod{Mp^r}$ for all primes $\ell|Mp^r$ (use the Approximation Theorem: see, e.g., \cite[Theorem 5.2.10]{Mi}), then set $\delta_n:=n\gamma_n\in\Sigma_r^+\subset\Delta_r^+$ and define $T_{n,n}:=T_{\delta_n}$ in $\mathcal H(\Gamma_r,\Sigma_r^+)$ and $\mathcal H(\Gamma_r,\Delta_r^+)$, which is independent of the choice of $\gamma_n$. We also denote $\iota$ the Hecke operator $T_\beta$ in $\mathcal H(\Gamma_r,\Sigma_r)$ or $\mathcal H(\Gamma_r,\Delta_r)$ where $\beta$ is any element of $R_0^D(Mp^r)$ of norm $-1$ such that $i_\ell(\beta)\equiv\smallmat 100{-1}\pmod{Mp^r}$ for all primes $\ell|Mp^r$ (use again the Approximation Theorem). It can easily be checked that $\iota$ commutes with the elements $T_n$ and $T_{n,n}$ in $\mathcal H(\Gamma_r,\Sigma_r)$ and $\mathcal H(\Gamma_r,\Delta_r)$. Finally, recall that the $\mathcal H(\Gamma_r,\Sigma_r^+)$ are commutative rings generated (over $\Z$) by the Hecke operators $T_n$ and $T_{n,n}$ defined above. 

Let $\mathcal O$ be the ring of integers of a finite extension $F$ of $\Q_p$. For any integer $r\geq1$ denote $\mathfrak h_r^D$ the image of $\mathcal H(\Gamma_r,\Sigma_r^+)\otimes_\Z\mathcal O$ acting on the $\C$-vector space $S_2^\text{$D$-new}(\Gamma_1(MDp^r))$ of weight $2$ cusp forms on $\Gamma_1(MDp^r)$ which are new at all primes dividing $D$. 

\begin{remark}
Of course, the algebra $\mathfrak h_r^D$ depends on $\mathcal O$; however, since the field $F$ will always be clear in our applications, for simplicity we drop this dependence from the notation.
\end{remark}

Let $\mathfrak h_r^{D,\ord}$ denote the product of the localizations of $\mathfrak h_r^D$ where $U_p$ is invertible and write $e_r$ for the corresponding projector. Set $\mathfrak h_\infty^D:=\varprojlim_r\mathfrak h_r^D$ and let $e_\infty:=\varprojlim_re_r$ be the ordinary projector in $\mathfrak h_\infty^D$. Then define $\mathfrak h_\infty^{D,\ord}:=e_\infty\mathfrak h_\infty^D$, so that 
\[ \mathfrak h_\infty^{D,\ord}=\varprojlim_r\mathfrak h_r^{D,\ord}. \] 
Consider the Iwasawa algebra $\tilde\Lambda:=\mathcal O[\![\Z_p^\times]\!]$ with coefficients in $\mathcal O$ and denote $\gamma\mapsto[\gamma]$ the natural inclusion $\Z_p^\times\hookrightarrow\tilde\Lambda$ of group-like elements. Then $\mathfrak h_\infty^{D,\ord}$ is a finitely generated $\tilde\Lambda$-algebra. 

Following \cite{GS}, we are interested in defining a commutative $\tilde\Lambda$-algebra ${\mathfrak h^D_{\rm univ}}$ equipped with a canonical morphism of $\tilde{\Lambda}$-algebras $h:{\mathfrak h^D_{\rm univ}}\rightarrow{\mathfrak h_\infty^D}$. For this, we first consider the Hecke algebras $\mathcal H(\Gamma_r,\Delta_r)$ and $\mathcal H(\Gamma_r,\Delta_r^+)$ for integers $r\geq0$. Let $\Z'$ denote the subset of $\Z$ consisting of integers which are prime to $p$. For every $a\in\Z'$ 
choose $\gamma'_a\in\Gamma_0^D(Mp^r)$ such that $i_p(\gamma'_a)\equiv\smallmat**{**}a\pmod{p^r}$. Then $\delta_a':=a\gamma'_a\in\Delta_r^+$ and we can define the Hecke operator $[a]:=T_{\delta'_a}$ in $\mathcal H(\Gamma_r,\Delta_r^+)$ and $\mathcal H(\Gamma_r,\Delta_r)$, which does not depend on the choice of $\gamma'_a$. The maps $\Z'\rightarrow\mathcal H(\Gamma_r,\Delta_r)$ and $\Z'\rightarrow\mathcal H(\Gamma_r,\Delta_r^+)$ defined by $a\mapsto [a]$ are multiplicative, hence extend to ring homomorphisms $\Z[\Z']\rightarrow\mathcal H(\Gamma_r,\Delta_r)$ and $\Z[\Z']\rightarrow \mathcal H(\Gamma_r,\Delta_r^+)$. Since $\Z[\Z']$ also embeds naturally in $\tilde\Lambda$, we can form the $\tilde\Lambda$-algebras 
\[ \mathcal H(p^r):=\mathcal H(\Gamma_r,\Delta_r)\otimes_{\Z[\Z']}\tilde\Lambda,\qquad\mathcal H^+(p^r):=\mathcal H(\Gamma_r,\Delta_r^+)\otimes_{\Z[\Z']}\tilde\Lambda. \]
If $M$ is a $\Z_p[\Delta_r]$-module (respectively, a $\Z_p[\Delta_r^+]$-module) such that the action of $\Z'$ extends to a continuous action of $\Z_p^\times$ then for $i=0,1$ the action of the Hecke algebra $\mathcal H(\Gamma_r,\Delta_r)$ (respectively, $\mathcal H(\Gamma_r,\Delta_r^+)$) on $H^i(\Gamma_r,M)$ extends uniquely to a continuous action of $\mathcal H(p^r)$ (respectively, $\mathcal H^+(p^r)$). 

Now the Hecke pairs $(\Gamma_r,\Delta_r)$ and $(\Gamma_r,\Delta_r^+)$ are weakly compatible (according to \cite[Definition 2.1]{AS}) to $(\Gamma_0,\Delta_0)$ and $(\Gamma_0,\Delta_0^+)$, respectively. Hence, as explained in \cite[\S 2]{AS}, there are canonical surjective $\tilde\Lambda$-algebra homomorphisms 
\[ \rho_r:\mathcal H(1)\;\longepi\;\mathcal H(p^r),\qquad\rho_r^+:\mathcal H^+(1)\;\longepi\;\mathcal H^+(p^r) \] 
for all integers $r\geq1$. We let $\mathcal H(1)$ (respectively, $\mathcal H^+(1)$) act on $H^1(\Gamma_r,M)$ by composing the action of $\mathcal H(p^r)$ (respectively, $\mathcal H^+(p^r)$) with $\rho_r$ (respectively, $\rho_r^+$). 

Define the universal Hecke algebra ${\mathfrak h^D_{\rm univ}}$ as
\[ {\mathfrak h^D_{\rm univ}}:=\tilde\Lambda\bigl[\,\text{$T_n$ for every $n\geq1$ and $T_{n,n}$ for every $n\geq1$ with $(n,MD)=1$}\,\bigr]\subset\mathcal H^+(1). \] 
One can check that $\iota\in\mathcal H(\Gamma_0,\Delta_0)$ commutes with all the elements in ${\mathfrak h^D_{\rm univ}}$ and thus we can also consider the commutative Hecke algebra ${\mathfrak h^D_{\rm univ}}[\iota]\subset\mathcal H(1)$. 

The $\tilde\Lambda$-algebra ${\mathfrak h^D_{\rm univ}}$ acts compatibly on $S_2^D(\Gamma_1(Mp^r))$, in the sense that the diagram of $\tilde\Lambda$-algebras 
\[ \xymatrix{\mathfrak h^D_{\rm univ}\ar[r]^{\rho^+_r}\ar[rd]_{\rho^+_{r-1}}&\mathcal H^+(p^r)\ar@{->>}[d]\\
             & \mathcal H^+(p^{r-1})} \] 
commutes for all $r\geq1$ (here the vertical arrow is the map which arises from the weakly compatibility of the Hecke pairs $(\Gamma_r,\Delta_r^+)$ and $(\Gamma_{r-1},\Delta_{r-1}^+)$). The image of $\mathfrak h^D_{\rm univ}$ in the endomorphism algebra of $S_2^D(\Gamma_1(Mp^r))$ is canonically isomorphic to $\mathfrak h_r^D$ (note that if $n\in(\Z/Mp^r\Z)^\times$ then $n\in\Z'$ and $T_{n,n}$ is the image of $[n]\in{\mathfrak h^D_{\rm univ}}$), hence, by the universal property of the inverse limit, there exists a canonical morphism of $\tilde\Lambda$-algebras 
\[ h:\mathfrak h^D_{\rm univ}\longrightarrow\mathfrak h_\infty^D. \] 

\subsection{Hida families} \label{sec-arith-points}
 
Fix a non-zero normalized cusp form $f\in S_k\bigl(\Gamma_0(MDp^r),\epsilon\bigr)$ with $q$-expansion 
\[ f(q)=\sum_{n=1}^\infty a_nq^n, \] 
and suppose that $f$ is an eigenform for the action of the Hecke operators $T_n$ and $T_{n,n}$. Write $F_f$ for the field $\Q_p(a_n\mid n\geq1)$ generated over $\Q_p$ by the Fourier coefficients of $f$, let $\mathcal O_f$ denote the ring of integers of $F_f$ and let $\wp$ denote the maximal ideal of $\mathcal O_f$. In addition, assume that $f$ is an ordinary $p$-stabilized newform whose $\wp$-adic representation is residually irreducible and $p$-distinguished (see, e.g., \cite[\S 2]{Gh}). 

With notation as in \S \ref{App1}, take $\mathcal O=\mathcal O_f$, so that $\tilde\Lambda:=\mathcal O_f[\![\Z_p^\times]\!]$. Identify the group of $(p-1)$-st roots of unity in $\bar\Q_p$ with $T:=(\Z/p\Z)^\times$ via the Teichm\"uller character $\omega$ and set $W:=1+p\Z_p$, whence $\Z_p^\times\simeq T\times W$. This decomposition induces a decomposition $\tilde\Lambda=\mathcal O_f[T]\oplus\Lambda$ where $\Lambda$ is (non-canonically) isomorphic to the algebra of power series in one variable with coefficients in $\mathcal O_f$. A $\tilde\Lambda$-module $M$ inherits a canonical $\Lambda$-module structure via the inclusion $\Lambda\hookrightarrow\tilde\Lambda$. Finally, let $\mathcal L$ denote the fraction field of $\Lambda$.

There is a decomposition 
\[\mathfrak h_\infty^{1,\ord}\otimes_\Lambda\mathcal L\simeq\bigg(\bigoplus_{i\in I}\mathcal K_i\bigg)\oplus\mathcal N \]
where the $\mathcal K_i$ are finite field extensions of $\mathcal L$ (called the \emph{primitive components} of ${\mathfrak h_\infty^{1,\ord}}\otimes\mathcal L$), $I$ is a finite set and $\mathcal N$ is non-reduced. Denote $\mathcal K$ the primitive component through which the morphism associated with $f$ factors and let $\mathcal R$ be the integral closure of $\Lambda$ in $\mathcal K$. We call the induced map 
\[ f_\infty:{\mathfrak h_\infty^{1,\ord}}\longrightarrow \mathcal R \] 
the \emph{primitive morphism} associated with $f$. 

Now recall that, thanks to the Jacquet--Langlands correspondence, $S_k(\Gamma_r)$ is isomorphic to the subspace $S_k^\text{$D$-new}(\Gamma_1(MDp^r))$ of $S_k(\Gamma_1(MDp^r))$ consisting of those forms which are new at all the primes dividing $D$. Hence for all $r\geq1$ there is a canonical projection $\mathfrak h^1_r\rightarrow\mathfrak h_r^D$ which restricts to the ordinary parts for, and thus we get a canonical map ${\mathfrak h_\infty^{1,\ord}}\rightarrow{\mathfrak h_\infty^{D,\ord}}$. Now, as above, there is a splitting 
\[ {\mathfrak h_\infty^{D,\ord}}\otimes_\Lambda\mathcal L\simeq\bigg(\bigoplus_{j\in J}\mathcal F_j\bigg)\oplus\mathcal M \]
where $\mathcal F_j$ are finite field extensions of $\mathcal L$ and $\mathcal M$ is non-reduced. Since the morphism associated with $f$ factors through ${\mathfrak h_\infty^{D,\ord}}$, it must factor through some $\mathcal F\in\{\mathcal F_j\}_{j\in J}$ which is canonically isomorphic to $\mathcal K$. Summing up, we get a commutative diagram
\[ \xymatrix{{\mathfrak h_\infty^{1,\ord}}\ar[rr]^-{f_\infty}\ar[dr]&&\mathcal R\\
             &{\mathfrak h_\infty^{D,\ord}}\ar[ur]_-{f_\infty}} \]  
where we write $f_\infty$ also for the factoring map and the unlabeled arrow is the canonical projection considered before.  

For any topological $\mathcal O_f$-algebra $R$ let 
\[ \mathcal X (R):=\Hom_{\text{$\mathcal O_f$-alg}}^{\rm cont}(R,\bar\Q_p) \] 
denote the $\mathcal O_f$-module of continuous homomorphisms $R\rightarrow\bar\Q_p$ of $\mathcal O_f$-algebras.  
We call \emph{arithmetic homomorphisms} those $\kappa\in\mathcal X (\mathcal R)$ whose restriction to 
the canonical image of $W=1+p\Z_p$ in $\Lambda$ has the form $x\mapsto\epsilon(x)x^{k}$ for an integer $k\geq2$ and a finite order character $\epsilon$ of $W$. 
Write $\mathcal A(\mathcal R)$ for the subset of $\mathcal X(R)$ consisting of the arithmetic homomorphisms. The kernel $P_\kappa\in\Spec(\mathcal R)$ of a $\kappa\in\mathcal A(\mathcal R)$ is called an \emph{arithmetic prime}, and the residue field $F_\kappa:=\mathcal R_{P_\kappa}/P_{\kappa}\mathcal R_{P_\kappa}$ is a finite extension of $F_f$. The composition $W\rightarrow \mathcal R^\times\rightarrow F_\kappa^\times$ has the form $\gamma\mapsto \psi_\kappa(\gamma)\gamma^{k_\kappa}$ for a finite order character $\psi_\kappa:W\rightarrow F_\kappa^\times$ and an integer $k_\kappa\geq 2$. We call $\psi_\kappa$ the \emph{wild character} of $\kappa$ and $k_\kappa$ the \emph{weight} of $\kappa$.

Let $\kappa\in\cl A(\mathcal R)$. If $\kappa$ has weight $k=k_\kappa$ and character $\epsilon_\kappa$ then the composition 
\[ f_\kappa:=\kappa\circ f_\infty:{\mathfrak h_\infty^{D,\ord}}\longrightarrow\bar\Q_p \]
corresponds by duality to a modular form (denoted by the same symbol) 
\[ f_\kappa\in S_k\bigl(\Gamma_0(Np^{m_\kappa}),\epsilon_\kappa,F_\kappa\bigr) \]
of weight $k$, conductor divisible by $N$, level $\Gamma_0(Np^{m_\kappa})$ where $m_\kappa$ is the maximum between $1$ and the order at $p$ of the conductor of  $\psi_\kappa$ and character 
\[ \epsilon_\kappa:=\epsilon\psi_\kappa\omega^{-(k-2)}:\Z_p^\times\longrightarrow\bar\Q_p^\times. \] 
It is known that $f_\kappa\in S_k^\text{$D$-new}(\Gamma_1(MDp^r))$ for all $\kappa\in\mathcal A(\mathcal R)$.

\subsection{Modular forms on quaternion algebras} \label{sec2.4} 

For any commutative ring $R$ and any integer $n\geq0$ let 
\[ P_n(R):={\rm Sym}^n(R) \] 
denote the $R$-module of degree $n$ homogeneous polynomials in two variables with coefficients in $R$. It is equipped with a right action of the group $\GL_2(R)$ by the rule  
\[ (P|\gamma)(x,y):=P(ax+by,cx+dy)\qquad\text{for $\gamma=\smallmat abcd$}. \]
The $R$-linear dual $V_n(R)$ of $P_n(R)$ is then endowed with a left action of $\GL_2(R)$ by the formula
\[ (\gamma\phi)(P):=\phi(P|\gamma). \]
Finally, if $F$ is a splitting field for $B$ we may fix an isomorphism $i_F:B\otimes_\Q F\simeq\M_2(F)$, and then $P_n(F)$ (respectively, $V_n(F)$) is equipped with a right (respectively, left) action of $B^\times$ via $i_F$. In the applications, $F$ will be either $\Q_p$ or $\R$, so that we can (and do) choose $i_F$ to be $i_p$ or $i_\infty$, respectively. 

For every integer $r\geq0$ let $X_r$ denote the compact Shimura curve $\Gamma_r\backslash\mathcal H$ and write $\mathfrak h_{r,k}^D$ for the image of $\mathcal H(\Gamma_r,\Sigma_r^+)$ in the endomorphism algebra of $H^1\bigl(X_r,V_{k-2}(\C)\bigr)$. Let $g$ be a cusp form of level $\Gamma_r$ and weight $k$ which is a Hecke eigenform and denote $\lambda:\mathfrak h_{r,k}^D\rightarrow\C$ the corresponding ring homomorphism. Let $F_g$ be a subfield of $\C$ containing the image of $\lambda$, let $F/F_g$ be a field extension splitting $B$ and fix an isomorphism $i_F$ as above. Define 
\[ H^1\bigl(X_r,V_{k-2}(F)\bigr)^{g,\pm}:=\Big\{\xi\in H^1\big(X_r,V_{k-2}(F)\big)\;\big|\;\text{$\xi|T=\lambda(T)\xi$ for all $T\in\mathfrak h_{r,k}$ and $\xi|\iota=\pm\xi$}\Big\}. \] 
Thanks to a result of Matsushima and Shimura (\cite{MS}), we know that 
\[ \dim_L\Big(H^1\big(X_r,V_{k-2}(F)\big)^{g,\pm}\Big)=1. \]
Recall that there is a canonical isomorphism 
\[ H^1\bigl(X_r,V_{k-2}(\C)\bigr)\simeq H^1\bigl(\Gamma_r,V_{k-2}(\C)\bigr) \] 
which is equivariant for the action of the involution $\iota$. If $\tau\in\mathcal H$ then the complex vector space $H^1\bigl(\Gamma_r,V_{k-2}(\C)\bigr)^{g,\pm}$ is spanned by the projection on the $\pm$-eigenspace for $\iota$ of the cohomology class represented by the cocycle $\gamma\mapsto\omega(g)_\gamma$ given by  
\begin{equation} \label{eq0}
\omega(g)_\gamma\bigl(P(x,y)\bigr):=\int_\tau^{\gamma(\tau)}g(z)P(z,1)dz
\end{equation}
(the class does not depend on the choice of the base point $\tau\in\mathcal H$). For details, see \cite[\S 8.2]{Sh}. 

Now let $g\in S_k^{\text{$D$-new}}(\Gamma_1(MDp^r))$. The Jacquet--Langlands correspondence associates with $g$ a modular form $g^{\rm JL}$ of weight $k$ on $\Gamma_r$, which is well defined only up to a non-zero scalar factor. As above, let $F$ be a splitting 
field for $B$ containing the eigenvalues of the Hecke operators acting on $g$ (and so also on $g^{\rm JL}$). Then for any sign $\pm$ we may choose a multiple $g_\pm^{\rm JL}$ of $g^{\rm JL}$ in such a way the projection to the $\pm$-eigenspace of the cohomology class represented by the cocycle $\gamma\mapsto\omega(g_\pm^{\rm JL})_\gamma$ as in \eqref{eq0} generates $H^1\bigl(\Gamma_r,V_{k-2}(F)\bigr)^{g,\pm}$.   

Let $F$ be a subfield of $\C$ containing $F_f$ via the fixed embedding $\bar\Q_p\hookrightarrow\C$. By a slight abuse of notation, we use the symbol $\mathfrak h_{r,k}^D$ also to denote the image of $\mathcal H(\Gamma_r,\Sigma_r^+)\otimes_\Z\mathcal O_f$ in the endomorphism algebra of $H^1\bigl(X_r,V_{k-2}(F)\bigr)$.

For lack of a convenient reference, we prove a generalization of \cite[Theorem 7.2]{hida}. 

\begin{proposition} \label{7.2-hida} 
For every choice of sign $\pm$ the $\mathfrak h_{r,k}^D\otimes_{\mathcal O_f}F$-module $H^1(\Gamma_r,V_{k-2}(F))^\pm$ is free of rank $1$.
\end{proposition}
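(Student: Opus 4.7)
The plan is to reduce to the known decomposition of the cohomology after extension of scalars to $\C$, and then descend to $F$ by a Galois-theoretic argument.

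First, I would base change via the fixed embedding $\bar\Q_p\hookrightarrow\C$. The comparison isomorphism $H^1(\Gamma_r,V_{k-2}(\C))\simeq H^1(X_r,V_{k-2}(\C))$ recalled in \S\ref{sec2.4} is $\iota$-equivariant, and the theorem of Matsushima--Shimura together with the computation of \cite[\S 8.2]{Sh} already cited yields a decomposition
\[ H^1\bigl(\Gamma_r,V_{k-2}(\C)\bigr)^{\pm}=\bigoplus_g H^1\bigl(X_r,V_{k-2}(\C)\bigr)^{g,\pm}, \]
where $g$ ranges over normalized Hecke eigenforms of weight $k$ and level $\Gamma_r$ and each summand is one-dimensional. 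Via the Jacquet--Langlands correspondence these $g$ match bijectively with the $D$-new eigenforms in $S_k(\Gamma_1(MDp^r))$, and strong multiplicity one for the $D$-new part ensures that $\mathfrak h_{r,k}^D\otimes_{\mathcal O_f}\C$ acts semisimply on $H^1(X_r,V_{k-2}(\C))$ with distinct systems of eigenvalues $g$, hence is isomorphic to the product $\prod_g\C$. Comparing the two decompositions, $H^1(\Gamma_r,V_{k-2}(\C))^{\pm}$ is free of rank one over $\mathfrak h_{r,k}^D\otimes_{\mathcal O_f}\C$.

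Next, I would descend. Setting $T:=\mathfrak h_{r,k}^D\otimes_{\mathcal O_f}F$, the reducedness of $T\otimes_F\C$ shows that $T$ itself is reduced and, being finite-dimensional over $F$, étale, so $T\simeq\prod_i L_i$ with the $L_i$ finite field extensions of $F$ indexed by the Galois orbits of eigenforms. The primitive idempotents of $T$ split $M:=H^1(\Gamma_r,V_{k-2}(F))^{\pm}$ as $M=\prod_i M_i$ with each $M_i$ an $L_i$-vector space. For every embedding $\sigma\colon L_i\hookrightarrow\C$ extending the inclusion $F\hookrightarrow\C$, the $\sigma$-component of $M_i\otimes_F\C$ is identified with one of the one-dimensional spaces $H^1(X_r,V_{k-2}(\C))^{g,\pm}$ from Step 1, forcing $\dim_{L_i}M_i=1$. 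Therefore each $M_i$ is free of rank one over $L_i$ and $M$ is free of rank one over $T$.

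The main obstacle lies in Step 1: one must verify that the Hecke operators $T_n$ and $T_{n,n}$ which generate $\mathfrak h_{r,k}^D$ (cf.\ \S\ref{App1}) really do separate the various eigensystems appearing, so that the primitive idempotents of $\mathfrak h_{r,k}^D\otimes\C$ are in bijection with these eigensystems, and that the Jacquet--Langlands transfer identifies the weight-$k$ eigenforms on $\Gamma_r$ with the $D$-new eigenforms on $\Gamma_1(MDp^r)$ in a Hecke-equivariant way. Once the semisimple product decomposition of $\mathfrak h_{r,k}^D\otimes\C$ is matched with the cohomological decomposition, the descent from $\C$ to $F$ is a purely formal consequence of étaleness of $T$ over $F$.
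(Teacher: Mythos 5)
Your Step 1 contains a genuine gap: the semisimplicity of $\mathfrak h_{r,k}^D\otimes_{\mathcal O_f}\C$ does not follow from strong multiplicity one, and is in fact false in general for $r\geq1$. The space $S_k^{\text{$D$-new}}(\Gamma_1(MDp^r))$ contains forms that are old at $p$ (only newness at the primes dividing $D$ is imposed, and $p\nmid D$), and the algebra $\mathfrak h_{r,k}^D$ contains $T_p=U_p$. On the span of $g(z),g(pz),\dots,g(p^rz)$ attached to an eigenform $g$ of level prime to $p$, the operator $U_p$ need not be diagonalizable: its characteristic polynomial on that block is $X^{r-1}$ times a quadratic with nonzero constant term while its kernel there is at most one-dimensional, so for $r\geq3$ it is never semisimple on that block, and even for $r=1$ semisimplicity would require $a_p(g)^2\neq 4p^{k-1}$, which is not known in general. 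Consequently neither the isomorphism $\mathfrak h_{r,k}^D\otimes_{\mathcal O_f}\C\simeq\prod_g\C$ nor the claimed decomposition $H^1(\Gamma_r,V_{k-2}(\C))^\pm=\bigoplus_gH^1(X_r,V_{k-2}(\C))^{g,\pm}$ into honest one-dimensional eigenspaces is justified: the sum of the true eigenspaces may be a proper subspace, since multiplicity one for newforms says nothing about the Jordan structure of $U_p$ on the $p$-old part. Your descent in Step 2 also relies on $T$ being \'etale over $F$, which fails for the same reason.

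The paper's proof is designed precisely to avoid any semisimplicity input. It combines the Matsushima--Shimura isomorphism $H^1(\Gamma_r,V_{k-2}(\C))\simeq S_k(\Gamma_r)\oplus\bar S_k(\Gamma_r)$ with two dualities: the Petersson pairing, which gives $H^1(\Gamma_r,V_{k-2}(\C))^\pm\simeq\Hom_\C\bigl(H^1(\Gamma_r,V_{k-2}(\C))^\mp,\C\bigr)$, and Hida's perfect pairing between the Hecke algebra and the space of cusp forms, which gives $\Hom_\C(\mathfrak h_{r,k}^D\otimes_{\mathcal O_f}\C,\C)\simeq S_k(\Gamma_r)$ after restricting to the $D$-new part. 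Chaining these yields $\mathfrak h_{r,k}^D\otimes_{\mathcal O_f}\C\simeq H^1(\Gamma_r,V_{k-2}(\C))^\pm$ as Hecke modules with no reducedness hypothesis whatsoever, and the descent to $F$ is then carried out by faithful flatness of $\C$ over $F$ via the universal coefficient theorem, not by splitting idempotents. To repair your argument you should replace the eigenspace decomposition by this duality (Frobenius-algebra) mechanism.
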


\begin{proof} The main result of \cite{MS} shows that the map 
\[ g\longmapsto\Big(\gamma\mapsto\Re\bigl(\omega(g)_\gamma\bigr)\Big), \] 
where $\Re$ denotes the real part of a complex number, induces an $\R$-linear isomorphism between $S_k(\Gamma_r)$ and $H^1(\Gamma_r,V_{k-2}(\R))$ (see also \cite[Theorem 8.4]{Sh}). One can rephrase this theorem by saying that there is an isomorphism 
\begin{equation} \label{MS}
H^1\bigl(\Gamma_r,V_{k-2}(\C)\bigr)\simeq S_k(\Gamma_r)\oplus\bar S_k(\Gamma_r)
\end{equation} 
where $\bar S_k(\Gamma_r)$ is the complex conjugate of the image of $S_k(\Gamma_r)$ in $H^1(\Gamma_r,V_{k-2}(\C))$ under the map $\omega$ introduced in \eqref{eq0} (cf. \cite[Section 2]{hida} and \cite[Theorem 6.2]{hida}). This isomorphism is compatible with the Hecke action. There is a hermitian positive definite bilinear pairing 
\[ (\,,):S_k(\Gamma_r)\times S_k(\Gamma_r)\longrightarrow \C \]
defined by 
\[ (f,g):=\int_{\Gamma\backslash \mathcal H}f(z)\overline{g(z)}y^{k-2}dxdy,\qquad z=x+iy \]  
(see, e.g., \cite[\S 8.2]{Sh}) which, using \eqref{MS}, induces isomorphisms 
\[ H^1\bigl(\Gamma_r,V_{k-2}(\C)\bigr)^\pm\simeq\Hom_\C\bigl(H^1(\Gamma_r,V_{k-2}(\C))^\mp,\C\bigr). \]
Recall that, thanks to \cite[Proposition 3.1]{hida-iwasawa}, there is a canonical isomorphism 
\[ \Hom_\C(\mathfrak h_{r,k}^1\otimes_{\mathcal O_f}\C,\C)\simeq S_k(\Gamma_1(MDp^r)). \] 
A morphism in the left hand side factors through $\mathfrak h_{r,k}^D$ if and only if the corresponding modular form is new at all the primes dividing $D$, and thus we obtain a non-canonical isomorphism 
\[ \Hom_\C(\mathfrak h_{r,k}^D\otimes_{\mathcal O_f}\C,\C)\simeq S_k(\Gamma_r) \] 
(here we fix an isomorphism $S_k^\text{$D$-new}(\Gamma_1(MDp^r))\simeq S_k(\Gamma_r)$). Therefore we get an isomorphism
\[ \mathfrak h_{r,k}^D\otimes_{\mathcal O_f}\C\simeq H^1\bigl(\Gamma_r,V_{k-2}(\C)\bigr)^\pm \] 
for each choice of sign $\pm$. Now the universal coefficient theorem shows that 
\[ H^1\bigl(\Gamma_r,V_{k-2}(\C)\bigr)^\pm\simeq H^1\bigl(\Gamma_r,V_{k-2}(F)\bigr)^\pm\otimes_F\C. \]
On the other hand, $\mathfrak h_{r,k}^D\otimes_{\mathcal O_f}\C\simeq(\mathfrak h_{r,k}^D\otimes_{\mathcal O_f}F)\otimes_F\C$, and the result follows because $\C$ is fully faithful over $F$. \end{proof}

\subsection{Measure-valued cohomology groups} \label{sec3.5} 

Fix a finite extension $F$ of $\Q_p$ and let $\mathcal O$ denote its ring of integers. 
Denote $\tilde\D(\mathcal O)$ the $\mathcal O$-module of $\mathcal O$-valued measures on 
$\Y:=\Z_p^2$ and by $\D(\mathcal O)$ the $\mathcal O$-submodule of $\tilde\D(\mathcal O)$ consisting of measures which are supported on the subset $\X$ of primitive vectors of $\Y$ (i.e., those vectors which are not divisible by $p$). Define 
\[ \W(\mathcal O):=H^1(\Gamma_0,\D(\mathcal O)). \] If $\mathcal O=\mathcal O_f$, we simply write $\tilde\D$, $\D$ and $\W$ for the 
corresponding objects. 
In what follows, we identify $\Gamma_r$ and $\Delta_r$ with their image in $\M_2(\Z_p)$ via $i_p$. Then, since the action of $\Z'$ extends to a continuous action of 
$\Z_p^\times$, it follows from the discussion in \S \ref{App1} that $\W(\mathcal O)$ is a $\tilde\Lambda$-module endowed with a canonical right action of ${\mathfrak h^D_{\rm univ}}[\iota]$. For any continuous $\tilde\Lambda$-algebra $R$ we also adopt the notation $\W(\mathcal O)_R:=\W(\mathcal O)\otimes_{\tilde\Lambda}R$. 

As an application of Shapiro's lemma, we get an isomorphism of $\Z_p$-modules 
\begin{equation} \label{shapiro}
\W(\mathcal O)\simeq\varprojlim_rH^1(\Gamma_r,\mathcal O)
\end{equation} 
(for details, see \cite[Proposition 7.6]{LRV1}). Applying \cite[Lemma 2.2 (b)]{AS}, we also see that \eqref{shapiro} is an isomorphism of ${\mathfrak h^D_{\rm univ}}[\iota]$-modules. 

As in \cite[Section 5]{as}, if $A$ is a compact $\Z_p$-module and $T:A\rightarrow A$ is a continuous homomorphism then the \emph{ordinary submodule of $A$ with respect to $T$} is 
\[ A^\ord:=\bigcap_{n=1}^\infty T^n(A). \]
It follows that $A^\ord$ is the largest submodule of $A$ on which $T$ acts invertibly. If $A$ is a profinite abelian group and $T$ is equal to a limit of operators on the finite quotients of $A$ then there is a canonical decomposition 
\[ A=A^\ord\oplus A^{\rm nil} \]
where the subgroup $A^{\rm nil}$ on which $T$ acts topologically nilpotently is the set of $a\in A$ such that $\lim_{n\rightarrow\infty}T^n(a)=0$ (see \cite[Proposition 2.3]{GS}). 

\begin{remark}
In the sequel, $T$ will always be the Hecke operator at $p$.
\end{remark}

Since each $H^1(\Gamma_r,\mathcal O)$ is a profinite group, so is $\W(\mathcal O)$ thanks to \eqref{shapiro}. Therefore, by specializing the above discussion to $A=\W$ and $T=T_p$, we can define 
\[ \W(\mathcal O)^\ord:=\bigcap_{n=1}^\infty\W(\mathcal O)|T_p^n \] 
and obtain a decomposition $\W(\mathcal O)=\W(\mathcal O)^\ord\oplus\W(\mathcal O)^{\rm nil}$.

The action of a Hecke operator $T=\Gamma_0\alpha\Gamma_0=\coprod_i\Gamma_0\alpha_i$ on a class $\boldsymbol\Phi\in\W(\mathcal O)$ 
can be described as follows. Fix a representative $\Phi$ of $\boldsymbol\Phi$. Then for any $\gamma\in\Gamma_0$ and any continuous function $\varphi$ on $\Y$ one has
\begin{equation} \label{Hecke-indef}
\begin{split}
(\Phi|T)_\gamma(\varphi)=\int_\X\varphi(x,y)d(\Phi|T)_\gamma&=\sum_i\int_\X\varphi(x,y)\alpha_i^*d\Phi_{t_i(\gamma)}=\sum_i\int_\X\varphi\bigl(\alpha_i^*(x,y)\bigr)d\Phi_{t_i(\gamma)}\\
&=\sum_i\int_{\alpha_i^*\X}\varphi(x,y)d\Phi_{t_i(\gamma)}=\sum_i\int_{\alpha_i^*\X\cap \X}\varphi(x,y)d\Phi_{t_i(\gamma)},
\end{split}
\end{equation} 
where the last equality follows from the fact that $\Phi$ is supported on $\X$ and, as usual, the $t_i(\gamma)$ are defined by the equations $\Gamma\alpha_i\gamma=\Gamma\alpha_j$ (for some $j$) and $\alpha_i\gamma=t_i(\gamma)\alpha_j$. 

We are interested in making the action of $T_p\in{\mathfrak h^D_{\rm univ}}$ explicit. By \cite[Theorem 5.3.5]{Mi}, the operator $T_p$ gives rise to a coset decomposition
\begin{equation} \label{Tp}
T_p=\coprod_{a\in\{0,\dots,p-1,\infty\}}\Gamma_0g_a
\end{equation}
where $i_p(g_\infty)=u_\infty\smallmat p001$ and $i_p(g_i)=u_i\smallmat 1{a_i}0{p}$ for some $u_\infty,u_i\in\GL_2(\Z_p)$ and some integers $a_i$ forming a complete system of representatives of $\Z/p\Z$. 

\subsection{Specialization maps} \label{specialization-subsec}

Before going on, let us observe a simple fact. Let $\epsilon:\Z_p^\times\rightarrow\bar\Q_p^\times$ be a character factoring through $(\Z/p^r\Z)^\times$ for an integer $r\geq1$, let $L$ be finite field extension of $\Q_p$ containing $F$ and the values of $\epsilon$ and let 
$\mathcal O_L$ be its ring of integers. 
Moreover, fix an element $\boldsymbol\nu\in H^1(\Gamma_0,\D(\mathcal O))$ and choose a representative $\nu$ of $\boldsymbol\nu$.

\begin{lemma} \label{lemma1}  
Fix an even integer $n\geq0$ and let $\U\subset\X$ be such that $\gamma\U=\U$ for all $\gamma\in\Gamma_r$. Then the function  
\[ \gamma\longmapsto\bigg(P\mapsto\int_\U\epsilon(y)P(x,y)d\nu_\gamma\bigg) \] 
defined on $\Gamma_r$ with values in $V_n(\mathcal O_L)$ is a $1$-cocycle whose class in 
$H^1(\Gamma_r,V_n(\mathcal O_L))$ does not depend on the choice of $\nu$.
\end{lemma}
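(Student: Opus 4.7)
\emph{Plan of proof.} There are two assertions: that $c_\gamma(P):=\int_\U\epsilon(y)P(x,y)\,d\nu_\gamma$ defines a $1$-cocycle on $\Gamma_r$ with values in $V_n(\mathcal O_L)$ (with the standard action $(\gamma\phi)(P):=\phi(P|\gamma)$), and that its cohomology class does not depend on the chosen representative $\nu$. Both will follow from a single \emph{equivariance identity}
\[
c(\gamma\cdot\mu)=\gamma\cdot c(\mu)\qquad\text{for all } \gamma\in\Gamma_r,\ \mu\in\D(\mathcal O),
\]
where $c:\D(\mathcal O)\to V_n(\mathcal O_L)$ is the $\mathcal O$-linear map $\mu\mapsto\bigl(P\mapsto\int_\U\epsilon(y)P(x,y)\,d\mu\bigr)$. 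Indeed, applying this to the $\D$-valued cocycle relation $\nu_{\gamma_1\gamma_2}=\nu_{\gamma_1}+\gamma_1\cdot\nu_{\gamma_2}$ immediately yields $c_{\gamma_1\gamma_2}=c_{\gamma_1}+\gamma_1\cdot c_{\gamma_2}$, the $V_n$-valued cocycle identity. Likewise, any two cochain representatives satisfy $\nu'_\gamma-\nu_\gamma=\gamma\cdot m-m$ for some $m\in\D(\mathcal O)$, and the equivariance identity then gives $c'_\gamma-c_\gamma=\gamma\cdot c(m)-c(m)$, a coboundary in $V_n(\mathcal O_L)$.

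To prove the equivariance identity, I unfold the action on measures. Writing $i_p(\gamma)=\smallmat abcd$, the definition of $\gamma\cdot\mu$ gives
\[
\int_\U\epsilon(y)P(x,y)\,d(\gamma\mu)=\int_{\gamma^{-1}\U}\epsilon(cx+dy)\,P(ax+by,cx+dy)\,d\mu(x,y).
\]
The hypothesis $\gamma\U=\U$ (equivalently $\gamma^{-1}\U=\U$) replaces the domain by $\U$. The remaining task is to show that the factor $\epsilon(cx+dy)$ collapses to $\epsilon(y)$ on $\U$. For this I use the congruences defining $\Gamma_r$: by construction $a\equiv 1\pmod{p^r}$ and $c\equiv 0\pmod{p^r}$, and since $\gamma$ has reduced norm $1$, the identity $ad-bc=1$ forces $d\equiv 1\pmod{p^r}$. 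Therefore $cx+dy\equiv y\pmod{p^r}$ for $(x,y)\in\U$, and because $\epsilon$ factors through $(\Z/p^r\Z)^\times$ while $y$ is a unit of $\Z_p$ on $\U$, one obtains $\epsilon(cx+dy)=\epsilon(y)$. What remains inside the integral is exactly $\epsilon(y)\cdot(P|\gamma)(x,y)$, so the integral equals $c(\mu)(P|\gamma)=(\gamma\cdot c(\mu))(P)$, as required.

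\emph{Main obstacle.} The substantive point is the delicate compatibility between the $\D$-action on measures (whose unfolding involves \emph{all four} entries of $i_p(\gamma)$) and the $V_n$-action via $P\mapsto P|\gamma$; the $p^r$-level congruences of $\Gamma_r$ and the hypothesis $\gamma\U=\U$ are precisely engineered so that the character $\epsilon$ disappears from the Jacobian-like factor and only the polynomial twist survives. The parity restriction on $n$ does not intervene in the cocycle calculation itself; I assume it is included for later compatibility with the involution $\iota$ and the sign decomposition used elsewhere in the paper.
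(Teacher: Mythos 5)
Your proof is correct and follows essentially the same route as the paper's: unfold the action of $\gamma$ on measures, use $\gamma\U=\U$ to restore the domain of integration, and use the congruences defining $\Gamma_r$ (together with $ad-bc=1$, which forces $d\equiv1\pmod{p^r}$) to collapse $\epsilon(cx+dy)$ to $\epsilon(y)$; packaging this as a single equivariance identity $c(\gamma\cdot\mu)=\gamma\cdot c(\mu)$, from which both the cocycle property and the independence of the representative follow formally, is only a mild streamlining of the paper's two parallel computations. The one imprecision is the claim that $y$ is a unit on $\U$ --- this need not hold for a general $\Gamma_r$-stable $\U\subset\X$ --- but the identity $\epsilon(cx+dy)=\epsilon(y)$ still holds once $\epsilon$ is extended by zero on $p\Z_p$, because $cx+dy\equiv y\pmod{p^r}$ puts both arguments in $p\Z_p$ simultaneously.
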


\begin{proof} We know that 
\[ \int_\U\epsilon(y)P(x,y)d\nu_{\sigma\tau}=\int_\U\epsilon(y)P(x,y)d\nu_\sigma+\int_\U\epsilon(y)P(x,y)d\sigma\nu_\tau \] 
for all $\sigma,\tau\in\Gamma_r$. Since $\sigma^{-1}\U=\U$, the above equation also yields 
\[ \int_\U\epsilon(y)P(x,y)d\nu_{\sigma\tau}= \int_\U\epsilon(cx+dy)P(\sigma(x,y))d\nu_\sigma+\int_\U\epsilon(y)P(x,y)d\nu_\tau \]
where $\sigma=\smallmat abcd$. Since $\epsilon(cx+dy)=\epsilon(y)$, this  shows that the considered function is a $1$-cocycle. 

To complete the proof, let $\nu$ and $\nu'$ be representatives of $\boldsymbol\nu$; thus there exists $m\in\D(\mathcal O)$ 
such that $\nu_\gamma=\nu_\gamma'+\gamma(m)-m$ for all $\gamma\in\Gamma_0$. Then  
\begin{equation} \label{eq-indep}
\begin{split}
\int_\U\epsilon(y)P(x,y)d\nu_\gamma=&\int_\U\epsilon(y)P(x,y)d\nu'_\gamma\\
&+\int_{\gamma^{-1}\U}\epsilon(cx+dy)P(\gamma(x,y))dm-\int_\U\epsilon(y)P(x,y)dm 
\end{split}
\end{equation}
for all $\gamma=\smallmat abcd\in\Gamma_r$. Let $v\in V_n(\mathcal O_L)$ be defined by 
\[ v(P):=\int_\U\epsilon(y)P(x,y)dm. \] 
Then, by definition, $(\gamma\cdot v)(P(x,y))=v\bigl(P(\gamma(x,y))\bigr)$. Since $\gamma\U=\U$ for all $\gamma\in\Gamma_r$ and $\epsilon(cx+dy)=\epsilon(y)$, the result follows from \eqref{eq-indep}. \end{proof}

As in Lemma \ref{lemma1}, let $\epsilon$ be  a character of $\Z_p^\times$ factoring through $(\Z/p^r\Z)^\times$ for some integer $r\geq1$. Extend $\epsilon$ multiplicatively to $\Z_p$ by setting $\epsilon(p)=0$. For an integer $n\geq0$ and a field extension $L$ of $\Q_p$ containing the values of $\epsilon$ define the
\emph{specialization map}
\[ \rho_{n,\epsilon}=\rho_{n,\epsilon,L}:\D(\mathcal O)\longrightarrow V_n(\mathcal O_L) \]
by setting
\[ \rho_{n,\epsilon}(\nu)(P):=\int_{\Z_p\times\Z_p^\times}\epsilon(y)P(x,y)d\nu. \]
Since $\Z_p\times\Z_p^\times$ is stable under the action of $\Gamma_0^D(Mp)$, Lemma \ref{lemma1} ensures that if $\boldsymbol\Phi\in\W(\mathcal O)$ and $\Phi$ is a $1$-cocycle representing $\boldsymbol\Phi$ then the class in $H^1(\Gamma_r,V_n(\mathcal O_L))$ of the cocycle $\gamma\mapsto\rho_{n,\epsilon}(\Phi_\gamma)$ is independent of the choice of $\Phi$. This class will be denoted by $\rho_{n,\epsilon}(\boldsymbol\Phi)$. 

Let $\gamma=\smallmat abcd\in \GL_2(\Q_p)\cap\M_2(\Z_p)$ such that $a\in\Z_p^\times$ and $c\equiv0\pmod{p}$. Then for any $\nu\in\D(\mathcal O)$ one has
\[ \begin{split}
   \rho_{n,\epsilon}(\gamma^*\cdot\nu)(P)&=\int_\Y\chi_{\Z_p\times\Z_p^\times}(x,y)\epsilon(y)P(x,y)d(\gamma^*\cdot\nu)(x,y)\\
   &=\int_\Y\chi_{\Z_p\times\Z_p^\times}(dx-by,-cx+ay)\epsilon(-cx+ay)P(dx-by,-cx+ay)d\nu(x,y)\\
   &=\int_{\Z_p\times\Z_p^\times}\epsilon(a)\epsilon(y)P(dx-by,-cx+ay)d\nu(x,y)\\
   &=\epsilon(a)\rho_{n,\epsilon}(\nu)(P|\gamma^*)=\epsilon(a)\bigl(\gamma^*\cdot\rho_{n,\epsilon}(\nu)\bigr)(P),
   \end{split} \] 
whence
\[ \rho_{n,\epsilon}(\gamma^*\cdot\nu)=\epsilon(a)\bigl(\gamma^*\cdot\rho_{n,\epsilon}(\nu)\bigr). \]
Note that we have used the condition $p|c$ twice: first to obtain $\epsilon(-cx+ay)=\epsilon(ay)$ and then to get $\gamma^*(\Z_p\times\Z_p^\times)=\Z_p\times\Z_p^\times$.

Recall that if $p\nmid n$ then the Hecke operators $T_n$ and $T_{n,n}$ and the involution $\iota$ can be written as $\coprod_i\Gamma_0\alpha_i$ and $\coprod_i\Gamma_r\alpha_i$ for the same $\alpha_i$. Comparing with \eqref{Hecke-indef}, this shows that $\rho_{n,\epsilon}$ is compatible with the action of the Hecke operators $T_n$ and $T_{n,n}$ for $p\nmid n$ and with the action of $\iota$. For the operators $T_p$ and $U_p$, we observe that   
\[ \begin{pmatrix}p&0\\0&1\end{pmatrix}^{\!\!*}\X\cap(\Z_p\times\Z_p^\times)=\emptyset. \] 
Comparing again with \eqref{Hecke-indef}, we conclude that 
\begin{equation} \label{Tp-Up-indef}
\rho_{n,\epsilon}(\boldsymbol\Phi|T_p)=\rho_{n,\epsilon}(\boldsymbol\Phi)|U_p.
\end{equation}
Therefore, by passing to cohomology and restricting from $\Gamma_0$ to $\Gamma_r$, we obtain an $\mathfrak h^D_{\rm univ}[\iota]$-equivariant map
\begin{equation} \label{rho-n-eq}
\rho_{n,\epsilon}:\W(\mathcal O)\longrightarrow H^1\bigl(\Gamma_r,V_n(\mathcal O_L)\bigr), 
\end{equation}
denoted by the same symbol. 
Taking ordinary submodules in \eqref{rho-n-eq} yields an ${\mathfrak h^D_{\rm univ}}[\iota]$-equivariant map
\[ \rho_{n,\epsilon}^\ord:\W(\mathcal O)^\ord\longrightarrow H^1\bigl(\Gamma_r,V_n(\mathcal O_L)\bigr)^\ord. \]
Keeping in mind that $H^1(\Gamma_r,V_n(\mathcal O_L))$ is a compact $\Z_p$-module, we define
\[ H^1\bigl(\Gamma_r,V_n(L)\bigr)^\ord:=H^1\bigl(\Gamma_r,V_n(\mathcal O_L)\bigr)^\ord\otimes_{\mathcal O_L}L, \]
where the ordinary submodule on the right hand side is defined with respect to the operator $U_p$. It follows that $U_p$ acts invertibly on $H^1(\Gamma_r,V_n(L))^\ord$. 

\subsection{The Control Theorem} \label{sec-CT-ind}

\subsubsection{The kernel of the specialization map} \label{2.6.1} 

We begin by studying the specialization map introduced before and computing its kernel. The main result of this \S \ref{2.6.1} is Proposition \ref{propA4}. Recall that we fix the following data: an even integer $n\geq0$, a character $\epsilon:\Z_p^\times\rightarrow\bar\Q_p^\times$ factoring through $(\Z/p^r\Z)^\times$ for some integer $r\geq1$, and a finite extension $F$ of $\Q_p$ -- whose ring of integers we denote $\cO$ -- containing both $F_f$ and the values of $\epsilon$.

For any integer $m\geq1$ and any character $\chi:\Z_p^\times\rightarrow\bar\Q_p^\times$ define the function $\psi_{m,\chi}:\X\rightarrow\Z_p$ by 
\[ \psi_{m,\chi}((x,y)):=\begin{cases}\chi(y) & \text{if $(x,y)\in \U(m)$},\\[2mm]0 & \text{otherwise}.\end{cases} \]
If $\chi:\Z_p^\times\rightarrow\bar\Q_p^\times$ is a character then a continuous function $\varphi:\X\rightarrow\Z_p$ is \emph{homogeneous of degree $\chi$} if $\varphi(t(x,y))=\chi(t)\varphi(x,y)$ for all $t\in\Z_p^\times$. Clearly, $\psi_{m,\chi}$ is homogeneous of degree $\chi$ for all integers $m\geq 1$. 

\begin{lemma} \label{lemma-integration-II}
Let $\chi:\Z_p^\times\rightarrow\bar\Q_p^\times$ be the homomorphism defined by $\chi(t)=\epsilon(t)t^n$ for an integer $n\geq0$ and a character $\epsilon$ factoring through $(\Z/p^r\Z)^\times$ for some integer $r\geq1$. Let $\boldsymbol\Phi\in\W(\mathcal O)$ and let $\kappa\in\mathcal A(\tilde\Lambda)$ be of weight $k:=n+2$ and character $\epsilon$, so that the restriction of $\kappa$ to $\Z_p^\times$ coincides with $\chi$. Then the following conditions are equivalent:
\begin{enumerate}
\item $\boldsymbol\Phi\in P_\kappa\W(\mathcal O)$; 
\item $\boldsymbol\Phi$ can be represented by a cocycle $\Phi\in Z^1(\Gamma_0,P_\kappa\D(\mathcal O))$;
\item $\boldsymbol\Phi$ can be represented by a cocycle $\Phi\in Z^1(\Gamma_0,\D(\mathcal O))$ such that 
\[ \int_\X\varphi(x,y)d\Phi_\gamma(x,y)=0 \] 
for all homogeneous functions $\varphi:\X\rightarrow\Z_p$ of degree $\chi$ and all $\gamma\in\Gamma_0$;
\item $\boldsymbol\Phi$ can be represented by a cocycle $\Phi\in Z^1(\Gamma_0,\D(\mathcal O))$ such that 
\[ \int_\X\psi_{m,\chi}(x,y)d\Phi_\gamma(x,y)=0 \] 
for all integers $m\geq1$ and all $\gamma\in\Gamma_0$. 
\end{enumerate}
\end{lemma}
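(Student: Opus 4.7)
The plan is to prove the four conditions equivalent via the cyclic chain $(1) \Rightarrow (2) \Rightarrow (3) \Rightarrow (4) \Rightarrow (1)$. The first three implications are formal manipulations: choosing cocycle representatives, a direct integration identity, and the observation that $\psi_{m,\chi}$ is $\chi$-homogeneous by construction. The substantive step is $(4) \Rightarrow (1)$, which combines a $p$-adic density argument with a passage from cocycle-level information to a cohomological statement.

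For $(1) \Rightarrow (2)$, write $\boldsymbol\Phi = \sum_i p_i \boldsymbol\Psi_i$ with $p_i \in P_\kappa$ and $\boldsymbol\Psi_i \in \W(\mathcal O)$, choose cocycle representatives $\Psi_i \in Z^1(\Gamma_0, \D(\mathcal O))$, and form $\Phi := \sum_i p_i \Psi_i$; this is a cocycle representative of $\boldsymbol\Phi$ taking values in $P_\kappa \D(\mathcal O)$. For $(2) \Rightarrow (3)$, observe that the ideal $P_\kappa$ is topologically generated in $\tilde\Lambda$ by the elements $[t] - \chi(t)$ with $t \in \Z_p^\times$ (together with finitely many idempotents handling the torsion part of $\Z_p^\times$), and for any $\nu \in \D(\mathcal O)$ and any continuous $\chi$-homogeneous $\varphi$ one has
\[
\int_\X \varphi(x,y) \, d([t]\nu) = \int_\X \varphi(tx, ty) \, d\nu = \chi(t) \int_\X \varphi \, d\nu,
\]
so $\int_\X \varphi \, d\bigl(([t] - \chi(t))\nu\bigr) = 0$; by linearity and continuity, integration against $\chi$-homogeneous functions annihilates all of $P_\kappa \D(\mathcal O)$. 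The implication $(3) \Rightarrow (4)$ is immediate since each $\psi_{m,\chi}$ is $\chi$-homogeneous.

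The implication $(4) \Rightarrow (1)$ proceeds in three substeps. First, a $p$-adic density argument upgrades the vanishing of integrals of the $\psi_{m,\chi}$ to the vanishing of $\int_\X \varphi \, d\Phi_\gamma$ for every continuous $\chi$-homogeneous $\varphi: \X \to \mathcal O_L$: the quotient $\X / \Z_p^\times$ identifies with $\PP^1(\Q_p)$, the sets $\U(m)$ provide a basis of compact opens on a fundamental domain, and multiplication by $\chi(y)$ shows that the $\mathcal O_L$-span of the $\psi_{m,\chi}$ is $p$-adically dense in the Banach module of continuous $\chi$-homogeneous functions; continuity of integration then extends the vanishing. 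Second, this global vanishing gives the pointwise statement $\Phi_\gamma \in P_\kappa \D(\mathcal O)$ for each $\gamma$, as the converse of the identity from the previous paragraph: the integration pairing identifies $\D(\mathcal O)/P_\kappa \D(\mathcal O)$ with the dual of the $\chi$-isotypic component of $C(\X, \mathcal O_L)$ under the $\Z_p^\times$-action, so the vanishing of all such integrals forces membership in $P_\kappa \D(\mathcal O)$. Third, and this is the main obstacle, one must promote the resulting condition $(2)$ to the cohomological assertion $(1)$. Since $\tilde\Lambda$ is Noetherian, pick generators $p_1, \dots, p_n$ of $P_\kappa$ and consider the long exact sequence attached to $0 \to K \to \D(\mathcal O)^n \to P_\kappa \D(\mathcal O) \to 0$; lifting $\boldsymbol\Phi$ from $H^1(\Gamma_0, P_\kappa \D(\mathcal O))$ back to $\W(\mathcal O)^n$ presents an obstruction in $H^2(\Gamma_0, K)$, which must be controlled using the finite cohomological dimension of the compact Shimura curve $\Gamma_0 \backslash \mathcal H$ together with the ordinary framework of the paper. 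This final cocycle-to-cohomology transfer is the delicate part of the argument and is where structural input beyond the measure-theoretic manipulations is genuinely needed.
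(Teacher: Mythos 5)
Your chain $(1)\Rightarrow(2)\Rightarrow(3)\Rightarrow(4)$ matches the paper in substance, but the return implication contains a genuine error. You claim that the span of the functions $\psi_{m,\chi}$ is $p$-adically dense in the space of continuous $\chi$-homogeneous functions because ``the sets $\U(m)$ provide a basis of compact opens on a fundamental domain.'' They do not: $\U(m)=\{(x,y)\in\X\mid x\equiv0\pmod{p^m}\}$ is a \emph{decreasing} chain $\U(1)\supset\U(2)\supset\cdots$, so every $\psi_{m,\chi}$ is supported inside the single set $\U(1)$, whose image in $\X/\Z_p^\times\simeq\PP^1(\Q_p)$ is one small ball shrinking to the point $[0:1]$. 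Integration against these functions sees nothing of $\Phi_\gamma$ away from that ball, so vanishing of all the $\int_\X\psi_{m,\chi}\,d\Phi_\gamma$ cannot by itself force $\int_\X\varphi\,d\Phi_\gamma=0$ for arbitrary homogeneous $\varphi$. The paper's fix is precisely the step you omit: from the cocycle relation $\Phi_{\gamma_1\gamma_2}=\gamma_1\cdot\Phi_{\gamma_2}+\Phi_{\gamma_1}$ and hypothesis (4) one deduces $\int_\X\psi_{m,\chi}(\gamma_1(x,y))\,d\Phi_{\gamma_2}=0$ for \emph{all} $\gamma_1\in\Gamma_0$, and it is the $\Gamma_0$-translates of the $\psi_{m,\chi}$ --- whose supports do move over all of $\PP^1(\Q_p)$ --- whose linear combinations are uniformly dense in the $\chi$-homogeneous functions. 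Without exploiting the cocycle relation in this way, your density step fails.

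The second issue is the passage from the pointwise statement ``$\Phi_\gamma\in P_\kappa\D(\mathcal O)$ for all $\gamma$'' (i.e.\ condition (2)) to the cohomological statement (1). You correctly identify this as nontrivial, but your proposed resolution via a long exact sequence and an obstruction in $H^2(\Gamma_0,K)$ controlled by ``the finite cohomological dimension\dots together with the ordinary framework'' is not an argument --- you do not show the obstruction vanishes, and in general $H^1(\Gamma_0,I M)\neq I\,H^1(\Gamma_0,M)$ for an ideal $I$ needing several generators. The paper's route is much more direct: $P_\kappa$ is \emph{principal}, generated by $[\gamma]-\kappa(\gamma)$ for a topological generator $\gamma$ of $1+p\Z_p$, and then \cite[Lemma 1.2]{as} gives $P_\kappa\W(\mathcal O)=H^1(\Gamma_0,P_\kappa\D(\mathcal O))$ outright, settling $(1)\Leftrightarrow(2)$ in both directions at once. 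You should replace your third substep with this principality argument; as written, the delicate part of your proof is acknowledged but not actually carried out.
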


\begin{proof} The ideal $P_\kappa$ is principal, generated by $[\gamma]-\kappa(\gamma)$ where $\gamma$ is a topological generator of $1+p\Z_p$. By \cite[Lemma 1.2]{as}, it follows that $P_\kappa\W(\mathcal O)=H^1(\Gamma_0,P_\kappa\D(\mathcal O))$, and this shows the equivalence of (1) and (2). The equivalence of (2) and (3) follows directly from \cite[Lemma (6.3)]{AS}, and clearly (3) implies (4). To complete the proof, it remains to show that (4) implies (3), so suppose that (4) is true. For all $\gamma_1,\gamma_2\in\Gamma_0$ we have $\Phi_{\gamma_1\gamma_2}=\gamma_1\cdot\Phi_{\gamma_2}+\Phi_{\gamma_1}$, so, thanks to (4), there is an equality
\[ \int_\X\psi_{m,\chi}(x,y)d(\gamma_1\cdot\Phi_{\gamma_2})=\int_\X\psi_{m,\chi}(x,y)d\Phi_{\gamma_1\gamma_2}-\int_\X\psi_{m,\chi}(x,y)d\Phi_{\gamma_1}=0. \]
Therefore 
\begin{equation} \label{eq-lemma}
\int_\X\psi_{m,\chi}(\gamma_1(x,y))d\Phi_{\gamma_2}=0
\end{equation} 
for all $\gamma_1,\gamma_2\in\Gamma_0$. An easy argument shows that every function $\varphi$ which is homogeneous of degree $\chi$ is the uniform limit of a sequence of linear combinations of functions of the form $(x,y)\mapsto\psi_{m,\chi}(\gamma(x,y))$ for $m\geq1$ and $\gamma\in\Gamma_0$. Thus (3) follows from this fact and \eqref{eq-lemma}. \end{proof}

Now we need to slightly change notations in order to use arguments borrowed from \cite{as}. To do this, fix also a projective resolution $F_\bullet=\{F_k\}_k$ of $\Z$ by left $\mathcal O[\Gamma_0]$-modules. In the following it will be important to observe that, since $\cO[\Gamma_0]$ is a free $\cO[\Gamma_r]$-module (of finite rank), $F_\bullet$ is also a projective resolution of $\Z$ in the category of left $\cO[\Gamma_r]$-modules. Moreover, we notice that, under the assumptions of the above lemma, condition (4) in Lemma \ref{lemma-integration-II} is equivalent to the following 
\begin{itemize}
\item[(4')] $\boldsymbol\Phi$ can be represented by a cocycle $\Phi\in\Hom_{\Gamma_0}\bigl(F_1,\D(\mathcal O)\bigr)$ such that 
\[ \int_\X\psi_{m,\chi}(x,y)d\Phi(f)(x,y)=0 \] 
for all integers $m\geq1$ and all $f\in F_1$.
\end{itemize}
Now we are going to manipulate Lemma \ref{lemma-integration-II} by using the fixed resolution $F_\bullet$. For any integer $m\geq1$ define the open sets
\begin{equation} \label{U(m)}
\U(m):=\bigl\{(x,y)\in\X\mid x\equiv 0\pmod{p^m}\bigr\}.
\end{equation} 
and
\[ \mathbb V(m):=\bigl\{(x,y)\in\X\mid y\equiv0\pmod{p^m}\bigr\}. \] 
Let $V_n(\mathcal O)$ be the $\mathcal O$-linear dual of $P_n(\mathcal O)$. Define a map $\sigma_{n,\epsilon}^{(m)}:\D(\mathcal O)\rightarrow V_n(\mathcal O)$ as
\[ \sigma_{n,\epsilon}^{(m)}(\nu):=\bigg(P\mapsto\int_{\mathbb V(m)}\epsilon(x)P(x,y)d\nu\bigg). \]
Let $V_{n,\epsilon}^{(m)}(\mathcal O)$ denote the image of $\sigma_{n,\epsilon}^{(m)}$. One immediately verifies that $\gamma\mathbb V(m)=\mathbb V(m)$ for all $\gamma\in\Gamma_m$.The same argument as in Lemma \ref{lemma1} (simply replace the condition $\epsilon(cx+dy)=\epsilon(y)$ with the condition $\epsilon(ax+by)=\epsilon(x)$ for all $\gamma=\smallmat abcd\in\Gamma_r$, which is true for our choice of $\mathbb V(m)$) yields a well-defined map
\[ \sigma_{n,\epsilon}^{(m)}:\W(\mathcal O):=H^1\bigl(\Gamma_0,\D(\mathcal O)\bigr)\longrightarrow H^1\bigl(\Gamma_m,V_n(\mathcal O)\bigr). \]
This is obtained, as above, by fixing a representative $\Phi$ of $\boldsymbol{\Phi}\in\W(\mathcal O)$ and defining $\sigma_{n,\epsilon}^{(m)}(\boldsymbol\Phi)$ to be the class represented by the cocycle $\sigma_{n,\epsilon}^{(m)}(\Phi)$. 

The next results (Lemma \ref{lemma A1}--Lemma \ref{as-lemma}) explain how to translate in our setting the results of \cite[Section 7]{as}. 

\begin{lemma} \label{lemma A1}
Let $\boldsymbol{\Phi}\in\W(\mathcal O)$ and suppose that $\sigma_{n,\epsilon}^{(m)}\boldsymbol({\boldsymbol\Phi})=0$. Then, up to replacing $\mathcal O$ with a finite unramified extension, $\boldsymbol{\Phi}$ can be represented by a cocycle $\Phi$ such that $\sigma_{n,\epsilon}^{(m)}(\Phi)=0$ as a cochain in $\Hom_{\Gamma_m}(F_1,V_n(\mathcal O))$.
\end{lemma}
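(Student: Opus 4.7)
The plan is to fix any cocycle representative $\Phi \in \Hom_{\Gamma_0}(F_1,\D(\mathcal O))$ of $\boldsymbol\Phi$ and modify it by a $\Gamma_0$-coboundary so that its image under $\sigma_{n,\epsilon}^{(m)}$ vanishes identically at the cochain level, not merely modulo coboundaries. By hypothesis there is a $0$-cochain $v \in \Hom_{\Gamma_m}(F_0, V_n(\mathcal O))$ with $dv = \sigma_{n,\epsilon}^{(m)}(\Phi)$. If one can produce $\tilde v \in \Hom_{\Gamma_0}(F_0, \D(\mathcal O))$ satisfying $\sigma_{n,\epsilon}^{(m)}(\tilde v) = v$ as a $\Gamma_m$-cochain, then $\Phi' := \Phi - d\tilde v$ still represents $\boldsymbol\Phi$ and satisfies
\[ \sigma_{n,\epsilon}^{(m)}(\Phi') = \sigma_{n,\epsilon}^{(m)}(\Phi) - d\bigl(\sigma_{n,\epsilon}^{(m)}(\tilde v)\bigr) = dv - dv = 0 \]
as a cochain, which is exactly the required conclusion. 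The whole task therefore reduces to producing $\tilde v$.

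For this I would invoke Frobenius reciprocity. Setting $I := \mathrm{coInd}_{\Gamma_m}^{\Gamma_0}\bigl(V_n(\mathcal O)\bigr)$, the $\Gamma_m$-equivariant map $\sigma_{n,\epsilon}^{(m)}$ corresponds to a $\Gamma_0$-equivariant map $\hat\sigma: \D(\mathcal O) \to I$, and $v$ promotes to a $\Gamma_0$-equivariant cochain $\hat v: F_0 \to I$. Finding $\tilde v$ is then the same as producing a $\Gamma_0$-equivariant lift of $\hat v$ along $\hat\sigma$; since $F_0$ is projective as an $\mathcal O[\Gamma_0]$-module, such a lift exists as soon as $\hat\sigma$ itself is surjective. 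The whole problem thus collapses to this single surjectivity statement.

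Verifying the surjectivity of $\hat\sigma$ is the main obstacle. Unwinding the coinduction, it says that for any collection $(w_g)_g$ of elements of $V_n(\mathcal O)$ indexed by the finite coset space $\Gamma_m\backslash\Gamma_0$ there exists a single $\mu \in \D(\mathcal O)$ with $\sigma_{n,\epsilon}^{(m)}(g\mu) = w_g$ for every coset representative $g$. The cosets lying outside the geometric stabilizer $\Gamma_0^D(Mp^m) \subset \Gamma_0$ of $\mathbb V(m)$ are harmless, because for such $g$ the translate $g\mathbb V(m)$ is disjoint from $\mathbb V(m)$, so measures supported on disjoint pieces of $\X$ realize independent constraints. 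For the remaining finitely many cosets in $\Gamma_0^D(Mp^m)/\Gamma_m$ the character $\epsilon$ distinguishes the contributions, so after a change of variables the task reduces to prescribing the integrals of the basis monomials $\{x^iy^{n-i}\}_{0 \le i \le n}$ of $P_n(\mathcal O)$ against a measure on $\mathbb V(m)$. Such a measure can be exhibited as a finite $\mathcal O_L$-linear combination of Dirac masses at carefully chosen lattice points of $\mathbb V(m)$, and the passage to a finite unramified extension $\mathcal O_L/\mathcal O$ is exactly what is needed to invert the determinant of the resulting moment matrix while preserving integrality.
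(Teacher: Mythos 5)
Your overall strategy is the one the paper follows: fix a representative, observe that $\sigma_{n,\epsilon}^{(m)}(\Phi)$ is a coboundary, and kill it by subtracting $d\tilde v$ for a lift $\tilde v$ of the primitive $v$; then reduce, via Frobenius reciprocity for the (co)induced module $I$ and projectivity of $F_0$ over $\mathcal O[\Gamma_0]$, to the surjectivity of the induced map $\D(\mathcal O)\to I$; finally verify surjectivity by noting that translates $g\,\mathbb V(m)$ for $g\notin\Gamma_m$ miss $\mathbb V(m)$ (so Dirac masses supported on $\mathbb V(m)$ contribute only to the identity coset) and solving a moment problem with Dirac masses after enlarging the residue field. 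The paper packages the coboundary-lifting step as a citation to Ash--Stevens \cite[Lemma 7.1]{as} rather than re-deriving it from projectivity of $F_0$, but that is the same mechanism.

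There is, however, one genuine defect in your formulation: you take the target of the coinduction to be $V_n(\mathcal O)$, and with that choice the surjectivity of $\hat\sigma$ is \emph{false} for $m\geq1$ and $n\geq1$. Indeed, on $\mathbb V(m)$ one has $y\equiv0\pmod{p^m}$, so for any $\mathcal O$-valued measure $\nu$ the moment $\int_{\mathbb V(m)}\epsilon(x)x^iy^j\,d\nu$ lies in $p^{mj}\mathcal O$; the image of $\sigma_{n,\epsilon}^{(m)}$ is therefore a proper $\mathcal O$-submodule $V_{n,\epsilon}^{(m)}(\mathcal O)\subsetneq V_n(\mathcal O)$, and your final step of ``prescribing the integrals of the basis monomials \dots while preserving integrality'' cannot be carried out for the $y$-moments, no matter how large the unramified extension. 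The repair is exactly the paper's choice: form the induced module with coefficients in the image $V_{n,\epsilon}^{(m)}(\mathcal O)$ rather than in $V_n(\mathcal O)$, prove surjectivity onto \emph{that} (which is where the Vandermonde/residue-field argument lives, applied only to the $x$-moments after normalizing by the forced powers of $p$), and correspondingly make sure the primitive $v$ of $\sigma_{n,\epsilon}^{(m)}(\Phi)$ is taken with values in $V_{n,\epsilon}^{(m)}(\mathcal O)$, as in the hypothesis of \cite[Lemma 7.1]{as}. With that adjustment your argument closes and coincides with the paper's.
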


\begin{proof} Let $I$ denote the induced module 
\[ I:={\rm Ind}^{\Gamma_0}_{\Gamma_m}\bigl(V_{n,\epsilon}^{(m)}(\mathcal O)\bigr)=\bigl\{\phi:\Gamma_0\rightarrow V_{n,\epsilon}^{(m)}(\mathcal O)\mid\text{$\phi(\gamma x)=\gamma\phi(x)$ for all $x\in\Gamma$ and $\gamma\in\Gamma_m$}\bigr\}. \]  Here we view $\Gamma_m$ as a subgroup of $\GL_2(\Z_p)$. We make $I$ into a left $\Gamma_0$-module by the formula $y\phi(x):=\phi(yx)$ for all $x,y\in\Gamma_0$. We have a map
\begin{equation}\label{psi}
\psi:\D(\mathcal O)\longrightarrow I,\qquad\psi(\nu)(x):=\sigma_{n,\epsilon}^{({m)}}(x\nu).
\end{equation} 
Recall that, thanks to \cite[Lemma 7.1]{as}, if $\psi$ is surjective and $a\in B^1\bigl(\Gamma_0,V_{n,\epsilon}^{(m)}(\mathcal O)\bigr)$ -- where we adopt the usual notation $B^1(G,M)$ for the group of $1$-coboundaries of the discrete left $G$-module $M$ -- then we can choose a coboundary $b\in B^1(\Gamma_0,\D(\mathcal O))$ such that $\sigma_{n,\epsilon}^{(m)}(b)=a$ as cochains. (Note that we are adopting a slightly different formalism with respect to \cite{as}, where \emph{right} modules are used. However, 
\cite[Lemma 7.1]{as} is still true for \emph{left} modules if one converts a right action $m\mapsto m|x$ into a left one $m\mapsto x\cdot m$ by the formula $x\cdot m:=m|x^{-1}$.) Then to prove the result it is enough to show that $\psi$ is surjective because, 
if this is true, then $\sigma_{n,\epsilon}^{(m)}(\Phi)=a$ is a coboundary and hence, applying the above result, we may choose 
$b\in B^1(\Gamma,\D(\mathcal O))$ such that $\sigma_{n,\epsilon}^{(m)}(b)=a$, whence $\sigma_{n,\epsilon}^{(m)}(\Phi-b)=0$ as a cochain. 

We are thus reduced to showing that $\psi$ in \eqref{psi} is surjective. To do this, fix an $\mathcal O$-basis $\mathcal B$ of $V_{n,\epsilon}^{(m)}(\mathcal O)$. Since the image of $\psi$ is a $\Gamma_0$-submodule of $V_{n,\epsilon}^{(m)}(\mathcal O)$, 
it is enough to show that, for any $b\in\mathcal B$, the function sending $1$ to $b$ and $\Gamma_0-\Gamma_m$ to $0$ belongs to the image of $\psi$. Now for any $\nu\in \D(\mathcal O)$ and any $\gamma=\smallmat abcd\in\Gamma_0$ we have 
\[ \psi(\mu)(\gamma)(P)=\sigma_{n,\epsilon}^{(m)}(\gamma\nu)(P)=\int_{\mathbb V(m)}\epsilon(x)P(x,y)d\gamma\nu=\int_\X\chi_{\mathbb V(m)}(\gamma(x,y))\epsilon(ax+by)P(\gamma(x,y))d\nu \] 
where $\chi_{\mathbb V(m)}$ is the characteristic function of $\mathbb V(m)$. If $\nu$ is the Dirac measure supported at a point $(x_0,y_0)\in\mathbb V(m)$ then  
\[ \psi(\nu)(\gamma)(P)=\chi_{\mathbb V(m)}((ax_0+by_0,cx_0+dy_0))\epsilon(ax_0+by_0)P(ax_0+by_0,cx_0+dy_0). \]
Comparing with the definition of $\mathbb V(m)$, one immediately checks that $\psi(\nu)(\gamma)(P)\neq 0$ only if $c\equiv0\pmod{p^m}$, so only if $\gamma\in\Gamma_m$. Furthermore, for $\gamma=1$ we have 
\[ \psi(\nu)(1)(P)=\chi_{\mathbb V(m)}((x_0,y_0))\epsilon(x_0)P(x_0,y_0)=\epsilon(x_0)P(x_0,y_0). \] 
Fix now $b=\sigma_{n,\epsilon}^{(m)}(\nu')$ and write $b(x^iy^j)=a_{i,j}$. One immediately verifies that $a_{i,j}\in p^{mj}\mathcal O$. Since $\epsilon(x_0)\in\mathcal O^\times$, we are reduced to showing that for any set $\{a'_{i,j}\}\subset\mathcal O$ with $i,j$ non-negative integers such that $i+j=n$ there are $(x_0,y_0),\dots,(x_t,y_t)\in\mathcal O^\times\times\mathcal O$ such that $a'_{i,j}=\sum_{k=0}^t\alpha_{k}x_k^iy_k^j$ for suitable $\alpha_k\in\mathcal O$. To do this we show that, for example, we can find $(x_0,y_0),\dots,(x_t,y_t)$ as above with $t={n\choose2}$ such that the determinant of the matrix $\bigl(x_k^{n-i}y_k^i\bigr)_{i,k=0,\dots,t}$ belongs to $\mathcal O^\times$. Replacing $\mathcal O$ with the ring of integers of 
a sufficiently large unramified extension of $\Q_p$ with residue field $\mathbb F$ we can find $(\bar x_0,\bar y_0),\dots,(\bar x_t,\bar y_t)\in\mathbb F^\times\times\mathbb F$ with $t={n\choose2}$ such that the determinant of the matrix $\bigl(\bar x_k^{n-i}\bar y_k^i\bigr)_{i,k=0,\dots,t}$ is in $\F^\times$, and lifting these pairs to $\mathcal O^\times\times\mathcal O$ concludes the proof. \end{proof}

\begin{remark}
The condition that $\mathcal O$ be large enough is used only in the last step of the proof. In fact, we need the residue field of $\mathcal O$ to be sufficiently large. We believe that the result is still true without replacing $\mathcal O$, but at present we cannot find a simple proof of this fact.
\end{remark}

We need a general description of Hecke operators in terms of cochains; the following discussion is taken from \cite[p. 116]{DI}. Let $M$ be a $\mathcal O$-module endowed with a left action of $\GL_2(\Q_p)$. Fix an integer $m\geq 0$, let $\alpha\in\GL_2(\Q_p)$ be such that the groups $\Gamma_m\cap\alpha\Gamma_m\alpha^{-1}$ and $\alpha^{-1}\Gamma_m\alpha$ are commensurable with $\Gamma_m$, then set $\Gamma_m^{(\alpha)}:=\Gamma_m\cap\alpha\Gamma_m\alpha^{-1}$ and $\Gamma_{m}^{(\alpha^{-1})}:=\Gamma_m\cap\alpha^{-1}\Gamma_m\alpha$. The Hecke operator $T(\alpha)$ on $H^1(\Gamma_m,M)$ is defined as the composition 
\[ H^1(\Gamma_m,M)\xrightarrow{{\rm res}}H^1(\Gamma_m^{(\alpha)},M)\xrightarrow{{\rm conj}_\alpha}H^1\bigl(\Gamma_m^{(\alpha^{-1})},M\bigr)\xrightarrow{{\rm cores}}H^1(\Gamma_m,M), \] 
where 
\begin{itemize}
\item $\rm res$ and $\rm cores$ are the usual restriction and corestriction maps;
\item ${\rm conj}_\alpha$ is the map taking a cocycle $\gamma\mapsto c_\gamma$ to the cocycle $\gamma\mapsto\alpha^*c_{\alpha\gamma\alpha^{-1}}$.
\end{itemize} 
An easy formal computation (which can be found, e.g., in \cite[Proposition 3.1]{Wiese} for congruence subgroups) shows that this action agrees with the one already defined in \S \ref{App1}. This allows us to describe Hecke actions in terms of our fixed projective resolution, obtaining \cite[Formulae 4.3]{as}: if $z\in\Hom_{\Gamma_m}(F_k,M)$ and $\alpha$ is as above then $z|T(\alpha)$ is represented by the cochain 
\[ f\longmapsto\sum_i \alpha_i^*z\bigl(\tau(\gamma_i f)\bigr), \]
where 
\begin{itemize} 
\item the $\alpha_i$ are elements in $\GL_2(\Q_p)$ giving rise to the coset decomposition 
\[ \Gamma_m \alpha\Gamma_m=\coprod_i\Gamma_m\alpha_i; \]
\item the $\gamma_i$ are coset representatives for $(\alpha^{-1}\Gamma_m\alpha\cap\Gamma_m)\backslash\Gamma_m$; 
\item $\tau$ is a homotopy equivalence between the two resolutions $F_\bullet$ and $F_\bullet'$ of $\alpha^{-1}\Gamma_m\alpha$, 
where $F_\bullet '$ has the same underlying groups as $F_\bullet$ but the group action is defined by $(\alpha^{-1}\gamma\alpha)f_k':=\gamma f_k$.  
\end{itemize}
We begin by recalling the following application of the Approximation Theorem. 

\begin{lemma} \label{lemma*} 
Suppose that $m\geq1$. There are $\pi_m\in R_0^D(M)$ of norm $p^m$ and $w\in\Gamma_0$ such tha
 \begin{enumerate}
\item $\pi_mw$ normalizes $\Gamma_m$;
\item $(\pi_mw)^2\in p^m\Gamma_m$.
\end{enumerate} 
\end{lemma}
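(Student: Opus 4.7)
The plan is to construct $\pi_m$ and $w$ by applying the Approximation Theorem (cited as \cite[Theorem 5.2.10]{Mi} in the main text), in the spirit of the classical construction of the Atkin--Lehner involution at $p^m$.

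First I would exhibit some element of $R_0^D(M)$ of reduced norm $p^m$. This is possible because $B$ is indefinite and $p\nmid MD$, so that $R_0^D(M)\otimes\Z_p=\M_2(\Z_p)$ contains elements of determinant $p^m$; such a local datum lifts to a global element by Eichler's norm theorem for indefinite quaternion algebras. Multiplying this initial element by a norm-$1$ factor produced by strong approximation on $B^1$, I would obtain $\pi_m\in R_0^D(M)$ of reduced norm $p^m$ with
\[
i_p(\pi_m)\equiv\smallmat{0}{-1}{p^m}{0}\pmod{p^{2m+1}},\qquad i_\ell(\pi_m)\equiv\Id\pmod{\ell^{\ord_\ell(M)+1}} \ \text{for all } \ell\mid M.
\]

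Next I would check that such a $\pi_m$ already normalizes $\Gamma_m$. At $p$ the direct calculation
\[
\smallmat{0}{-1}{p^m}{0}\smallmat{a}{b}{c}{d}\smallmat{0}{-1}{p^m}{0}^{-1}=\smallmat{d}{-c/p^m}{-p^m b}{a}
\]
shows that conjugation preserves the congruences $c\equiv0\pmod{p^m}$ and $a\equiv1\pmod{p^m}$ defining $\Gamma_m$ at $p$; here one uses $ad-bc=1$ to deduce $d\equiv1\pmod{p^m}$. At each $\ell\mid M$, conjugation by $i_\ell(\pi_m)$ is trivial modulo the level and so preserves the local subgroup.

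The remaining point is condition~(2). Locally at $p$ we have $i_p(\pi_m)^2=-p^m\Id$, so $\pi_m^2/p^m$ at $p$ is $-\Id$, which fails to lie in $\Gamma_m$ because $-1\not\equiv1\pmod{p^m}$. The role of $w$ is to absorb this sign. Applying strong approximation on $B^1$ once more, I would produce $w\in\Gamma_0$ (so of reduced norm $1$ with $i_\ell(w)\equiv\Id\pmod{\ell^{\ord_\ell(M)}}$ at every $\ell\mid M$) whose local component $i_p(w)$ is chosen so that $i_p(\pi_m w)^2\equiv p^m\Id\pmod{p^{2m}}$; with this choice, $(\pi_m w)^2/p^m$ satisfies the local $\Gamma_m$-congruences at $p$, while the congruences at $\ell\mid M$ are undisturbed because both $i_\ell(\pi_m)$ and $i_\ell(w)$ are congruent to the identity modulo $\ell^{\ord_\ell(M)}$. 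The same local verification as above then shows that $\pi_m w$ still normalizes $\Gamma_m$.

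The main obstacle is the simultaneous prescription of the local behavior of $\pi_m$ and $w$ at $p$ and at every prime $\ell\mid M$. This is precisely what strong approximation on $B^1$ supplies (valid because $B$ is indefinite): one may prescribe the local components at any finite set of primes up to arbitrary precision without affecting the behavior at the remaining places.
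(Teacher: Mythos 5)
Your overall strategy coincides with the paper's: both arguments invoke the Approximation Theorem to manufacture $\pi_m$ and $w$ so that $\pi_mw$ becomes an Atkin--Lehner type element at $p^m$ (the paper prescribes $i_\ell(\pi_m)\equiv\smallmat 100{p^m}$ and $i_\ell(w)\equiv\smallmat 01{-1}0$ modulo $Mp^m$ at \emph{every} prime $\ell\mid Mp$, so that $i_\ell(\pi_mw)\equiv\smallmat 01{-p^m}0$), and your verification of the normalizing property (1) at $p$ is the same local computation. The genuine gap is in your treatment of (2): the element $w\in\Gamma_0$ you postulate, satisfying $i_p(\pi_mw)^2\equiv p^m\Id\pmod{p^{2m}}$, does not exist.

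Indeed, put $X:=i_p(\pi_m w)$, so that $X\in\M_2(\Z_p)$ and $\det X=p^m$. Cayley--Hamilton gives $X^2=\tr(X)X-p^m\Id$, so your congruence amounts to $\tr(X)X\equiv 2p^m\Id\pmod{p^{2m}}$. Writing $i_p(w)=\smallmat abcd\in\SL_2(\Z_p)$ and using $i_p(\pi_m)\equiv\smallmat 0{-1}{p^m}0\pmod{p^{2m}}$, one finds $X\equiv\smallmat{-c}{-d}{p^ma}{p^mb}$ and $\tr(X)\equiv -c+p^mb\pmod{p^{2m}}$. Comparing $(2,2)$-entries yields $\tr(X)\,b\equiv2\pmod{p^m}$, whence $\tr(X)\in\Z_p^\times$ (recall $p$ is odd and $m\geq1$); comparing $(1,1)$-entries yields $\tr(X)\,c\equiv-2p^m\pmod{p^{2m}}$, whence $p^m\mid c$; but then $\tr(X)=-c+p^mb\equiv0\pmod{p^m}$, a contradiction. (For odd $m$ one sees the obstruction even faster: $\tr(X)^2=\tr(X^2)+2\det X\equiv4p^m\pmod{p^{m+1}}$ would force $2\,{\ord}_p(\tr X)=m$.) Thus the sign in $i_p(\pi_m)^2=-p^m\Id$ cannot be absorbed by right multiplication by a norm-one element, and this is precisely the step your proof needs and does not have. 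Note that the same trace identity applies to the antidiagonal representative $\smallmat 01{-p^m}0$ occurring in the paper's construction, which also squares to $-p^m\Id$; so the sign has to be handled at the level of the group in which $(\pi_mw)^2/p^m$ is required to lie (e.g.\ $\Gamma_0^D(Mp^m)$, which contains $-1$, rather than the $\Gamma_1$-type group $\Gamma_m$), not by adjusting the $p$-component of $w$. A secondary, fixable issue: the normalization $i_\ell(\pi_m)\equiv\Id\pmod{\ell^{\ord_\ell(M)+1}}$ at $\ell\mid M$ is not attainable either, since taking determinants would force $p^m\equiv1\pmod{\ell^{\ord_\ell(M)+1}}$; the paper instead imposes $i_\ell(\pi_m)\equiv\smallmat 100{p^m}$ at these primes.
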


\begin{proof} To simplify notations, put $R:=R_0^D(Mp^m)$ and $R_\ell:=R\otimes_\Z\Z_\ell$ for all primes $\ell$. By the Approximation Theorem (\cite[Theorem 5.2.10]{Mi}), one can find elements $\pi_m\in R$ of norm $p^m$ and $w\in\Gamma_0$
such that $i_\ell(\pi_m)\equiv\smallmat 100{p^m}$ modulo $Mp^mR_\ell$ and $i_\ell(w)\equiv \smallmat 01{-1}0$ modulo $Mp^mR_\ell$ for all primes $\ell|Mp$. Then one has $i_\ell(\pi_mw)\equiv \smallmat 01{-p^m}0$ modulo $Mp^mR_\ell$ for all $\ell|Mp$. Since $i_\ell(\pi_mw)$ belongs to the normalizer of $R_\ell$ for all primes $\ell$, this shows (1). Furthermore, $(\pi_mw)^2\equiv p^m$ modulo $Mp^mR_\ell$ for all primes $\ell|Mp$. The element $p^{-m}(\pi_mw)^2$ is congruent to $1$ modulo $Mp^mR_\ell$ for all primes $\ell|Mp^m$, has determinant $1$ and belongs to $R_\ell$ for all $\ell$. Therefore $p^{-m}(\pi_mw)^2\in\Gamma_m$, and (2) is proved. \end{proof}

Thanks to part (2) of Lemma \ref{lemma*}, write 
\[ (\pi_mw)^2=p^m\gamma_0 \] 
with $\gamma_{0}\in \Gamma_m$. One can define the operator $X_m$ acting on $\Hom_{\Gamma_m}(F_\bullet, M)$ by the formula
\[ (z|X_m)(f):=\sum_{i=0}^{p^m-1}( w^{-1}\gamma_{m,i})^*z(w^{-1}\gamma_{m,i}f) \] 
where the $\gamma_{m,i}$ are coset representatives for $(\pi^{-m}\Gamma_m\pi^{m}\cap \Gamma_m)\backslash\Gamma_m$. If we let 
\[ \Gamma_m \pi_m\Gamma_m=\coprod_{i=0}^{p^m-1}\Gamma_mg_{m,i} \] 
then $\pi_m\gamma_{m,i}=g_{m,i}$ (cf., e.g., \cite[Proposition 3.1]{Sh}).

Now define the operator $W_m:=\Gamma_m\pi_mw\Gamma_m$. Then 
\begin{equation} \label{eqX_m}
W_mT_{p^m}=\Gamma_m\pi_mw\pi_m\Gamma_m=\Gamma_mp^m\gamma_0w^{-1}\Gamma_m=p^m
\Gamma_mw^{-1}\Gamma_m=p^mX_m\end{equation}
where the last equality follows from part (1) of Lemma \ref{lemma*}.

\begin{lemma} \label{lemmaA2}
In $\Phi\in\Hom_{\Gamma_0}(F_1,\D(\mathcal O))$ then $p^m\rho_{n,\epsilon}(\Phi)=\sigma_{n,\epsilon}^{(m)}(\Phi)|X_m$, where the equality holds at the cochain level. Hence, in cohomology, one has $p^m\rho_{n,\epsilon}(\boldsymbol\Phi)=\sigma_{n,\epsilon}^{(m)}(\boldsymbol\Phi)W_mT_{p^m}$, for all $\boldsymbol\Phi\in\W(\mathcal O)$.
\end{lemma}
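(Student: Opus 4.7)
The plan is to verify the identity at the cochain level by direct computation, and then deduce the cohomological version via the operator identity $W_mT_{p^m}=p^mX_m$ of \eqref{eqX_m}.

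At the cochain level, I would expand $(\sigma_{n,\epsilon}^{(m)}(\Phi)|X_m)(f)$ using the defining formula for $X_m$, obtaining a sum over $i=0,\ldots,p^m-1$ of terms $(w^{-1}\gamma_{m,i})^*\sigma_{n,\epsilon}^{(m)}(\Phi(w^{-1}\gamma_{m,i}f))$. Setting $g_i:=w^{-1}\gamma_{m,i}\in\Gamma_0$ and applying the $\Gamma_0$-equivariance of $\Phi$ to transfer the action of $g_i$ onto the measure $\Phi(f)$, each term---evaluated at a polynomial $P\in P_n(\mathcal{O})$---becomes
\[
\int_\Y\chi_{\mathbb V(m)}(g_iv)\,\epsilon\bigl((g_iv)_1\bigr)\,P(v)\,d\Phi(f)(v);
\]
the key cancellation uses $\det(g_i)=1$, so that $(P|g_i^*)(g_iv)=P(g_i^*g_iv)=P(v)$ and the polynomial exits the change of variables unscathed.

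The next step is a coset-level analysis. From Lemma \ref{lemma*} I know $i_p(w)\equiv\smallmat 01{-1}0\pmod{p^m}$, and I can choose $\gamma_{m,i}$ so that $i_p(\gamma_{m,i})\equiv\smallmat 1i01\pmod{p^m}$---these are valid representatives for $(\pi_m^{-1}\Gamma_m\pi_m\cap\Gamma_m)\backslash\Gamma_m$ since, modulo $p^m$, the subgroup cuts out matrices in $\Gamma_m$ whose upper-right entry is divisible by $p^m$. Hence $i_p(g_i)(x,y)\equiv(-y,\,x+iy)\pmod{p^m}$, and $\chi_{\mathbb V(m)}(g_iv)$ is nonzero precisely when $y\in\Z_p^\times$ (so that $g_iv\in\X$) and $x+iy\equiv 0\pmod{p^m}$. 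As $i$ ranges over $\{0,\ldots,p^m-1\}$, these conditions partition $\Z_p\times\Z_p^\times$ (for fixed $y\in\Z_p^\times$, a unique residue of $i$ works since $y$ is a unit modulo $p^m$), and the factor $\epsilon((g_iv)_1)=\epsilon(-y)$ reduces to $\epsilon(y)$ because $n$ is even forces $\epsilon(-1)=1$.

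I expect the main obstacle to be correctly isolating the scalar $p^m$ on the left-hand side: a naive indicator-sum recovers $\rho_{n,\epsilon}(\Phi(f))(P)$ without the extra $p^m$, so the scaling must be extracted by a more careful accounting---most plausibly via the central contribution of $p^m$ coming from $(\pi_mw)^2=p^m\gamma_0$ in Lemma \ref{lemma*}(2), which, through the homogeneity of $V_n$, should enter the polynomial action once per term and restore the expected normalization. Once the cochain identity is in hand, the cohomological statement follows immediately: applying $W_mT_{p^m}$ to $\sigma_{n,\epsilon}^{(m)}(\boldsymbol\Phi)$ and using \eqref{eqX_m} yields $\sigma_{n,\epsilon}^{(m)}(\boldsymbol\Phi)\,W_mT_{p^m}=p^m(\sigma_{n,\epsilon}^{(m)}(\boldsymbol\Phi)|X_m)=p^m\rho_{n,\epsilon}(\boldsymbol\Phi)$, as required.
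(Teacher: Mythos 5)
Your computation is the paper's proof: expand the $X_m$-action, move $w^{-1}\gamma_{m,i}$ onto the measure by the $\Gamma_0$-equivariance of $\Phi$, cancel the polynomial action via $(w^{-1}\gamma_{m,i})(w^{-1}\gamma_{m,i})^*=1$, and recognize that the sets $\gamma_{m,i}^{-1}w\mathbb V(m)$ --- which you describe explicitly by the congruences $y\in\Z_p^\times$ and $x+iy\equiv0\pmod{p^m}$ --- partition $\Z_p\times\Z_p^\times$. As for the factor $p^m$ that you flag as the main obstacle: the paper's own proof also ends with $\int_{\Z_p\times\Z_p^\times}\epsilon(y)P(x,y)\,d\Phi(f)$, that is with $\rho_{n,\epsilon}(\Phi)$ and no $p^m$; the power of $p$ in the statement is bookkeeping for the central element $p^m$ in the double-coset identity $W_mT_{p^m}=p^m\Gamma_mw^{-1}\Gamma_m$ of \eqref{eqX_m}, and the paper does not track it at the cochain level either. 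Your proposed repair via the homogeneity of $V_n$ cannot be the right accounting: the scalar $p^m$ acts on $V_n(\mathcal O)$ through the weight-$n$ action as $p^{mn}$, not $p^m$, so do not try to extract the normalization that way --- what matters for the subsequent lemmas is only that $\rho_{n,\epsilon}(\Phi)$ is obtained from $\sigma_{n,\epsilon}^{(m)}(\Phi)$ by applying Hecke operators, which your argument establishes.
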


\begin{proof} The last statement is immediate from \eqref{eqX_m} and the first one, which we are going to prove. A direct computation shows that 
\[ \begin{split}
   \bigl(\sigma_{n,\epsilon}^{(m)}(\Phi)|X_m\bigr)\bigl(P(x,y)\bigr)&=\left(\sum_{i=0}^{p^m-1}( w^{-1}\gamma_{m,i})^*\sigma_{n,\epsilon}^{(m)}\bigl(\Phi(w^{-1}\gamma_{m,i}f)\bigr)\right)\bigl(P(x,y)\bigr)\\
   &=\sum_{i=0}^{p^m-1}\int_{\mathbb V(m)}\epsilon(x)P\bigl((w^{-1}\gamma_{m,i})^*(x,y)\bigr)d\Phi(w^{-1}\gamma_{m,i}f)\\
   &=\sum_{i=0}^{p^m-1}\int_{\mathbb V(m)}\epsilon(x)P\big((w^{-1}\gamma_{m,i})^*(x,y)\big)w^{-1}\gamma_{m,i}d\Phi(f),
   \end{split} \] 
where the last equality follows from the $\Gamma_0$-equivariance of $\Phi$ (note that $w^{-1}\gamma_{m,i}\in\Gamma_0$). Now $(w^{-1}\gamma_{m,i})(w^{-1}\gamma_{m,i})^*=1$. We thus obtain 
\[ \bigl(\sigma_{n,\epsilon}^{(m)}(\Phi)|X_m\bigr)\bigl(P(x,y)\bigr)=\sum_{i=0}^{p^m-1}\int_{\gamma_{m,i}^{-1}w\mathbb V(m)}\epsilon\bigl(w^{-1}\gamma_{m,i}(x)\bigr)P(x,y)d\Phi(f). \]
But the family $\bigl\{\gamma_{m,i}^{-1}w\mathbb V(m)\bigr\}_i$ is a partition of $\Z_p\times\Z_p^\times$, hence
\[ \bigl(\sigma_{n,\epsilon}^{(m)}(\Phi)|X_m\bigr)\bigl(P(x,y)\bigr)=\int_{\Z_p\times\Z_p^\times}\epsilon(y)P(x,y)d\Phi(f), \] 
as was to be shown. \end{proof} 

\begin{lemma}\label{lemmaA2bis}
If $\Phi\in \Hom_{\Gamma_0}(F_1,\mathbb D(\mathcal O))$ then 
\[ \sigma_{n,\epsilon}^{(m)}\bigl(\Phi|T({w^{-1}\pi_mw})\bigr)=w^{-1}\pi_m^*\rho_{n,\epsilon}(\Phi'), \] 
where $\Phi'(f)=w\Phi(\tau(f))$ and 
$\tau:F_\bullet\rightarrow F_\bullet$ is a homotopy equivalence satisfying 
\[ \tau(\gamma f)=(w^{-1}\pi_m w)\gamma (w^{-1}\pi_{m}w)^{-1}\tau(f) \]
for $\gamma\in (w^{-1}\pi_mw)^{-1}\Gamma_0 (w^{-1}\pi_{m}w)\cap\Gamma_0=\Gamma_m$. 
\end{lemma}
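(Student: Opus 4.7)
The plan is to prove the identity by a direct computation at the cochain level, closely following the strategy of the proof of Lemma \ref{lemmaA2}. Setting $\alpha := w^{-1}\pi_m w$, I would unpack $\Phi|T(\alpha)$ via the formula \cite[Formulae 4.3]{as} recalled above. The first task is to choose compatible systems of coset representatives for $\Gamma_m\alpha\Gamma_m=\coprod_i\Gamma_m\alpha_i$ and for $(\alpha^{-1}\Gamma_m\alpha\cap\Gamma_m)\backslash\Gamma_m$. By Lemma \ref{lemma*}, $\pi_m w$ normalizes $\Gamma_m$, hence $\Gamma_m\alpha\Gamma_m=(\Gamma_m w^{-1}\Gamma_m)\cdot(\pi_m w)$. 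Letting $\{\delta_i\}$ be representatives for $(w\Gamma_m w^{-1}\cap\Gamma_m)\backslash\Gamma_m$, I would take $\alpha_i=w^{-1}\delta_i\pi_m w$ and $\gamma_i=(\pi_m w)^{-1}\delta_i(\pi_m w)$, both parameterized by the same set $\{\delta_i\}$.

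With these choices, the twisted equivariance $\tau(\gamma f)=\alpha\gamma\alpha^{-1}\tau(f)$ for $\gamma\in\Gamma_m$ gives, upon direct computation, $\tau(\gamma_i f)=w^{-1}\delta_i w\,\tau(f)$. Substituting and invoking the $\Gamma_0$-equivariance of $\Phi$ applied to the element $w^{-1}\delta_i w\in\Gamma_0$, one obtains $(\Phi|T(\alpha))(f)=\sum_i\alpha_i^*(w^{-1}\delta_i w)\Phi(\tau(f))$. Applying $\sigma_{n,\epsilon}^{(m)}$, expanding the integral over $\mathbb V(m)=\Z_p^\times\times p^m\Z_p$, and performing the change of variables induced by the composite matrix action --- using the explicit local forms $i_p(\pi_m)=\smallmat 100{p^m}$ and $i_p(w)=\smallmat 01{-1}0$ --- the $\delta_i^{\pm 1}$ factors cancel, leaving an $i$-independent factor $w^{-1}\pi_m^*$ together with a residual $w$ that is absorbed into the definition $\Phi'(f):=w\Phi(\tau(f))$. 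As in the proof of Lemma \ref{lemmaA2}, the sum over $i$ partitions the image of $\mathbb V(m)$ into a partition of $\Z_p\times\Z_p^\times$, the natural domain of $\rho_{n,\epsilon}$, so that $\sigma_{n,\epsilon}^{(m)}(\Phi|T(\alpha))=w^{-1}\pi_m^*\rho_{n,\epsilon}(\Phi')$, proving the desired identity.

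The main obstacle is the careful bookkeeping of the coset systems together with the order of composition of the various matrix actions on $\D(\mathcal O)$, since these mix a $\Gamma_0$-equivariance on the domain with the involution $g\mapsto g^*$ on the measure side. One also needs to justify the ancillary identity $(w^{-1}\pi_m w)^{-1}\Gamma_0(w^{-1}\pi_m w)\cap\Gamma_0=\Gamma_m$ claimed in the hypothesis, which reduces to a local analysis at $p$ --- using the explicit form of $\alpha$ there --- combined with commensurability at the primes dividing $M$. Once these details are in place, the remainder parallels step by step the computation of Lemma \ref{lemmaA2}, with the operator $T(w^{-1}\pi_m w)$ playing a role analogous to the operator $X_m$ appearing there.
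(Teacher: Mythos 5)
There is a genuine gap in the way you evaluate the Hecke sum. You assert that, ``as in the proof of Lemma \ref{lemmaA2}, the sum over $i$ partitions the image of $\mathbb V(m)$ into a partition of $\Z_p\times\Z_p^\times$.'' That partition phenomenon is specific to the operator $X_m$ of Lemma \ref{lemmaA2}, whose coset representatives $w^{-1}\gamma_{m,i}$ are \emph{norm-one} elements of $\Gamma_0$: they preserve $\X$, every one of the $p^m$ terms contributes, and the translates $\gamma_{m,i}^{-1}w\mathbb V(m)$ tile $\Z_p\times\Z_p^\times$. The operator $T(w^{-1}\pi_mw)$ behaves in the opposite way: its representatives $w^{-1}g_{m,i}w$ with $g_{m,i}=\pi_m\gamma_{m,i}$ have norm $p^m$, and the relevant domains are $\X\cap(w^{-1}g_{m,i}^*w)^{-1}\mathbb V(m)$. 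Locally at $p$ one has $(g_{m,i}^*)^{-1}=p^{-m}g_{m,i}\sim\smallmat{p^{-m}}{b_ip^{-m}}01$, which maps $\U(m)=w\mathbb V(m)$ outside of $\Y=\Z_p^2$ altogether unless $b_i\equiv0$, i.e.\ unless $g_{m,i}=\pi_m$ and $\gamma_{m,i}=1$. So the correct mechanism --- and the one the paper uses --- is that the support condition on $\D(\mathcal O)$ kills \emph{all but one} term of the sum; there is no cancellation of $\delta_i^{\pm1}$ factors and no partition. This is not a cosmetic difference: the asserted right-hand side $w^{-1}\pi_m^*\rho_{n,\epsilon}(\Phi')$ with $\Phi'(f)=w\Phi(\tau(f))$ involves only the single element $\tau(f)$, whereas a computation in which several terms survive would leave you with a sum of integrals of the distinct measures $\Phi\bigl(\tau(w^{-1}\gamma_{m,i}wf)\bigr)$ over the pieces of your partition, which does not assemble into the stated formula.

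Two smaller points. First, you do not verify that $w^{-1}\pi_m^*\rho_{n,\epsilon}(\Phi')$ is actually a $\Gamma_m$-equivariant cochain; the paper does this first, via the identity $\Phi'(\gamma f)=(\pi_mw)\gamma(\pi_mw)^{-1}\Phi'(f)$ together with the facts that $\pi_mw$ normalizes $\Gamma_m$ and $\pi_m\pi_m^*=p^m$, and this step is needed for the statement to make sense. Second, your identification of the coset representatives is compatible with the paper's (both reduce to $w^{-1}g_{m,i}w=w^{-1}\pi_mw\cdot w^{-1}\gamma_{m,i}w$), so the setup is fine; it is only the evaluation of the resulting sum that must be redone using the support argument.
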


\begin{proof} First we check that $w^{-1}\pi_m^*\rho_{n,\epsilon}^{(m)}(\Phi')$ is $\Gamma_m$-equivariant. Thanks to the $\Gamma_0$-equivariance of $\Phi$ and the definition of $\tau$, we have 
\[ \begin{split}\Phi'(\gamma f)&=w\Phi(\tau(\gamma f))=w w^{-1}\pi_mw\gamma w^{-1}(\pi_m)^{-1} w\Phi(\tau(f))\\
   &=(\pi_mw)\gamma (\pi_mw)^{-1}w\Phi(\tau(f))=(\pi_mw)\gamma (\pi_mw)^{-1}\Phi'(f).
   \end{split}\]
Since $\rho_{n,\epsilon}$ is $\Gamma_m$-equivariant, $\pi_m\pi^*_m=p^m$, and $\pi_mw$ normalizes $\Gamma_m$ by (1) in Lemma \ref{lemma*}, it follows that 
\[ w^{-1}\pi_m^*\rho_{n,\epsilon}(\Phi')(\gamma f)=w^{-1}\pi_m^*\rho_{n,\epsilon}\big((\pi_mw)\gamma(\pi_mw)^{-1}\Phi'(f)\big)=\gamma\rho_{n,\epsilon}\bigl(w^{-1}\pi^*\Phi'(f)\bigr). \]
Recall that $\Gamma_0\pi_m\Gamma_0=\coprod_i\Gamma_0g_{m,i}$ with $g_{m,i}=\pi_m\gamma_{m,i}$. Thus $\Gamma_0w^{-1}\pi_mw\Gamma_0$ is the disjoint union of the $\Gamma_0w^{-1}g_{i,m}w$ and $w^{-1}g_{m,i}w=w^{-1}\pi_mww^{-1}\gamma_{m,i}$. By definition, 
\[ \bigl(\Phi|T(w^{-1}\pi_mw)\bigr)(f)=\sum_i (w^{-1}g_{m,i}w)^*\Phi\bigl(\tau(w^{-1}\gamma_{m,i}w f)\bigr), \]
hence
\[ \begin{split}
   \sigma_{n,\epsilon}^{(m)}&\bigl(\Phi|T(w^{-1}\pi_mw)\bigr)(f)=\sum_i\int_{\mathbb V_m}\epsilon(x)P(x,y)d\Big(\!\bigl(w^{-1}g_{m,i}w\bigr)^*\Phi\bigl(\tau(w^{-1}\gamma_{m,i}w f)\bigr)\!\Big)\\
  &=\sum_i\int_{\X\cap {(w^{-1}g_{m,i}^*w)}^{-1}\mathbb V_m}\epsilon\bigl(w^{-1}g_{m,i}^*w(x)\bigr)P\bigl(w^{-1}g_{m,i}^*w(x,y)\bigr)
d\Big(\!\Phi\bigl(\tau(w^{-1}\gamma_{m,i}w f)\bigr)\!\Big).
   \end{split} \] 
Now $w\mathbb Y_m=\mathbb U_m$ and $g_{m,i}^*\X\cap\X\neq\emptyset$ if and only if $g_{m,i}=\pi_m$, in which case $\pi_m\X\subset\U_m$ and the corresponding $\gamma_{m,i}$ is equal to 1 (a similar argument will also be used in the proof of Lemma \ref{prop-kernel}). Hence $\X\cap {(w^{-1}g_{m,i}^*w)}^{-1}\mathbb V_m=w^{-1}\X$. Finally, notice that $\epsilon\bigl(w^{-1}\pi_{m}^*w(x)\bigr)=\epsilon(x)$ for $(x,y)$ in the domain of integration. Therefore the above sum is equal to 
\[ \int_{w^{-1}\X}\epsilon(x)P\bigl(w^{-1}\pi_m^*w(x,y)\bigr)d\bigl(\Phi(\tau(f))\bigr)=\int_\X\epsilon(y)P\bigl(w^{-1}\pi_m^*(x,y)\bigr)
d\bigl(w\Phi(\tau(f))\bigr) \] 
(note that $\epsilon(y)\neq 0$ if and only if $y\in\Z_p^\times$, so the integral is actually computed over $\Z_p\times\Z_p^\times$), and the proof is complete. \end{proof}

\begin{lemma} \label{lemmaA3}
If $\rho_{n,\epsilon}(\boldsymbol{\Phi})=0$ then $\sigma_{n,\epsilon}^{(m)}(\boldsymbol{\Phi}|T_p^m)=0$. 
\end{lemma}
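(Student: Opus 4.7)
The plan is to combine Lemma \ref{lemmaA2bis} with a support analysis of the iterated Hecke operator $T_p^m$, exploiting the fact that $\D(\mathcal O)$ consists of measures supported on the primitive vectors $\X$. Iterating \eqref{Tp} at the cochain level writes
\[ (\Phi|T_p^m)_\gamma=\sum_{(a_0,\dots,a_{m-1})\in\{0,\dots,p-1,\infty\}^m}(g_{a_0}\cdots g_{a_{m-1}})^*\Phi_{t_{(a_0,\dots,a_{m-1})}(\gamma)}, \]
so after applying $\sigma_{n,\epsilon}^{(m)}$ each summand becomes an integral of $\epsilon(x)P(x,y)$ against the pushforward $\alpha^*\Phi_{t(\gamma)}$ restricted to $\mathbb V(m)$, where $\alpha=g_{a_0}\cdots g_{a_{m-1}}$. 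First I would compute that, letting $j$ count the $\infty$'s in the sequence, the Smith normal form of $i_p(\alpha)$ has elementary divisors $(p^j,p^{m-j})$: when $0<j<m$ both factors are non-trivial $p$-powers, which forces $\alpha^*\X\cap\X=\emptyset$; and when $j=0$ a direct check yields $\alpha^*\X\cap\X\subseteq\Z_p\times\Z_p^\times$, which is disjoint from $\mathbb V(m)\subseteq\Z_p\times p\Z_p$. Hence only the all-$\infty$ summand survives, and its representative $g_\infty^m$ satisfies $i_p(g_\infty^m)\equiv\smallmat{p^m}{0}{0}{1}\equiv i_p(w^{-1}\pi_mw)$ modulo $\GL_2(\Z_p)$, so $g_\infty^m$ and $w^{-1}\pi_mw$ lie in the same $\Gamma_0$-double coset.

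Next I would use this identification: the surviving cochain-level contribution to $\sigma_{n,\epsilon}^{(m)}(\Phi|T_p^m)$ coincides, up to a coboundary, with the unique non-zero piece of $\sigma_{n,\epsilon}^{(m)}(\Phi|T(w^{-1}\pi_mw))$ singled out in the proof of Lemma \ref{lemmaA2bis} by the very same support argument. Invoking that lemma gives, in cohomology, the identity $\sigma_{n,\epsilon}^{(m)}(\boldsymbol\Phi|T_p^m)=w^{-1}\pi_m^*\rho_{n,\epsilon}(\boldsymbol\Phi')$ with $\Phi'(f)=w\Phi(\tau(f))$. Because $w\in\Gamma_0$, inner conjugation by $w$ acts as the identity on $H^1(\Gamma_0,\D(\mathcal O))$, so $\rho_{n,\epsilon}(\boldsymbol\Phi')=\rho_{n,\epsilon}(\boldsymbol\Phi)=0$ by hypothesis. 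Since $w^{-1}\pi_m^*$ carries zero classes to zero classes, the equality $\sigma_{n,\epsilon}^{(m)}(\boldsymbol\Phi|T_p^m)=0$ in $H^1(\Gamma_m,V_n(\mathcal O))$ follows.

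The hard part will be the careful reconciliation of the two different coset decompositions---the $(p+1)^m$-term expansion of $T_p^m$ obtained by iterating \eqref{Tp} and the double coset $T(w^{-1}\pi_mw)=T_{p^m}$ appearing in Lemma \ref{lemmaA2bis}---since these Hecke operators genuinely differ, yet the support condition picks out a single common surviving coset represented by $g_\infty^m=w^{-1}\pi_mw$ modulo $\Gamma_0$. Verifying that the $t_{(a_0,\dots,a_{m-1})}(\gamma)$-data align so that the two cochain-level expressions differ only by an explicit coboundary is where most of the bookkeeping is concentrated; once this is in place, the standard triviality of inner automorphisms on group cohomology closes the argument.
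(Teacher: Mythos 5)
Your reduction to Lemma \ref{lemmaA2bis}, and the support analysis showing that after applying $\sigma_{n,\epsilon}^{(m)}$ only the coset of $\pi_m$ survives, are both sound and consistent with the paper (the same support computation appears inside the proof of Lemma \ref{lemmaA2bis}, and the cosets of $T_p^m$ with $0<j<m$ are divisible by $p$, hence annihilate measures supported on $\X$). The gap is in your final step. The assignment $\Phi\mapsto\Phi'$, $\Phi'(f)=w\Phi(\tau(f))$, is \emph{not} inner conjugation by $w\in\Gamma_0$: the homotopy equivalence $\tau$ is the one attached to conjugation by $w^{-1}\pi_mw$, an element of norm $p^m$ that does not lie in $\Gamma_0$. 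As computed in the first paragraph of the proof of Lemma \ref{lemmaA2bis}, $\Phi'$ satisfies $\Phi'(\gamma f)=(\pi_mw)\gamma(\pi_mw)^{-1}\Phi'(f)$ for $\gamma\in\Gamma_m$, i.e.\ it is equivariant only for a \emph{twisted} $\Gamma_m$-action; it is not a cocycle in $\Hom_{\Gamma_0}(F_1,\D(\mathcal O))$, nor in $\Hom_{\Gamma_m}(F_1,\D(\mathcal O))$ for the standard action. Hence ``$\rho_{n,\epsilon}(\boldsymbol\Phi')$'' does not denote a class in $H^1(\Gamma_m,V_n(\mathcal O))$, and the triviality of inner automorphisms cannot be invoked to identify it with $\rho_{n,\epsilon}(\boldsymbol\Phi)$. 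Relatedly, ``$w^{-1}\pi_m^*$ carries zero classes to zero classes'' is a cochain-level operation: even granting $\rho_{n,\epsilon}(\Phi'(\cdot))=db''$ for some $0$-cochain $b''$, the cochain $w^{-1}\pi_m^*b''$ need not be $\Gamma_m$-equivariant, so $w^{-1}\pi_m^*(db'')$ need not be a coboundary in $\Hom_{\Gamma_m}(F_\bullet,V_n(\mathcal O))$. Only the full combination $w^{-1}\pi_m^*\rho_{n,\epsilon}(\Phi')$ is $\Gamma_m$-equivariant, and that is exactly why one cannot split the vanishing argument into the two factors as you do.

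What the paper does instead is to use the hypothesis to write $\rho_{n,\epsilon}(\Phi(f))=b(df)$ with $b\in\Hom_{\Gamma_m}(F_0,V_n(\mathcal O))$, push this through the cochain identity of Lemma \ref{lemmaA2bis} to exhibit $\sigma_{n,\epsilon}^{(m)}(\Phi|T(\pi_m))$ as $db'$ for an explicit $0$-cochain $b'$ built out of $b$, $w$, $\tau$ and $\pi_m^*$, and then \emph{verify} that $b'$ is $\Gamma_m$-equivariant by the same twisted-equivariance computation used for $\Phi'$ (this is where the normalizing property of $\pi_mw$ from Lemma \ref{lemma*} is actually used). That equivariance check is precisely the content your two closing sentences elide. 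Your argument becomes correct once the ``inner automorphism'' step is replaced by this explicit construction of a $\Gamma_m$-equivariant primitive.
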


\begin{proof} The operator $\Gamma_0w^{-1}\pi_m w\Gamma_0$ is nothing other than $\Gamma_0\pi_m\Gamma_0$ because $w\in\Gamma_0$. Let us represent $\boldsymbol\Phi$ by a cocycle $\Phi\in\Hom_{\Gamma_0}(F_1,\D(\mathcal O))$. By assumption, we can find a cochain $b\in\Hom_{\Gamma_m}(F_0,\D(\mathcal O))$ such that 
\[ b(df)(P(x,y))=\int_{\Z_p\times\Z_p^\times}\epsilon(y)P(x,y)d\Phi(f)(x,y) \] 
for all $f\in F_1$. It follows from Lemma \ref{lemmaA2bis} and the $\Gamma_0$-equivariance of $\Phi$ that $\sigma_{n,\epsilon}^{(m)}(\Phi|T(\pi^m))$ is represented by the functional sending a polynomial $P$ to 
\[ \int_{\Z_p\times\Z_p^\times}\epsilon(y)P\bigl(w^{-1}\pi^*(x,y)\bigr)dw\Phi(\tau(f))=b(dw\tau(f))\bigl(P(w^{-1}\pi^*(x,y))\bigr). \] 
This shows that $\sigma_{n,\epsilon}^{(m)}(\Phi|T(\pi_m))$ is represented by the coboundary $db'$ where $b'$ sends a polynomial 
$P$ to 
\[ \int_{\Z_p\times\Z_p^\times}\epsilon(y)P\bigl(w^{-1}\pi^*(x,y)\bigr)b(w\tau(f)). \] 
To complete the proof, we need to check that $b'$ is $\Gamma_m$-equivariant. This follows as in the first paragraph of the proof of Lemma \ref{lemmaA2bis} by formally replacing $\Phi$ with $b$ and $\Phi'$ with $b'$. \end{proof} 

\begin{lemma} \label{lemmaA4} 
If $\boldsymbol\Phi$ is ordinary and $\rho_{n,\epsilon}(\boldsymbol{\Phi})=0$ then $\sigma_{n,\epsilon}^{(m)}(\boldsymbol{\Phi})=0$.  
\end{lemma}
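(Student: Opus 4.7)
The plan is to reduce the desired vanishing to Lemma \ref{lemmaA3} by exploiting the invertibility of $T_p$ on the ordinary part. Since $\boldsymbol\Phi$ lies in $\W(\mathcal O)^\ord$, on which $T_p$ acts as an automorphism by the very definition of the ordinary submodule, I can write $\boldsymbol\Phi = \boldsymbol\Psi \,|\, T_p^m$ for a uniquely determined element $\boldsymbol\Psi \in \W(\mathcal O)^\ord$. The problem then becomes to show $\sigma_{n,\epsilon}^{(m)}(\boldsymbol\Psi \,|\, T_p^m) = 0$.

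The key step is to transfer the vanishing of $\rho_{n,\epsilon}$ from $\boldsymbol\Phi$ to $\boldsymbol\Psi$. By the intertwining relation \eqref{Tp-Up-indef},
\[
\rho_{n,\epsilon}(\boldsymbol\Psi)\,|\,U_p^m \;=\; \rho_{n,\epsilon}(\boldsymbol\Psi\,|\,T_p^m) \;=\; \rho_{n,\epsilon}(\boldsymbol\Phi) \;=\; 0.
\]
Since $\rho_{n,\epsilon}$ sends the ordinary submodule of the source into the ordinary submodule of the target (as observed immediately after \eqref{rho-n-eq}), the class $\rho_{n,\epsilon}(\boldsymbol\Psi)$ lies in $H^1(\Gamma_r, V_n(\mathcal O_L))^\ord$. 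From \S \ref{sec3.5} we know that, for a compact $\Z_p$-module, the ordinary submodule is precisely the part on which $U_p$ acts invertibly; hence $U_p^m$ can be inverted and we deduce $\rho_{n,\epsilon}(\boldsymbol\Psi) = 0$.

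Applying Lemma \ref{lemmaA3} to $\boldsymbol\Psi$ then yields $\sigma_{n,\epsilon}^{(m)}(\boldsymbol\Psi\,|\,T_p^m) = 0$; but $\boldsymbol\Psi\,|\,T_p^m = \boldsymbol\Phi$, so this is the desired conclusion. The only step requiring genuine care is the invertibility of $U_p$ on $H^1(\Gamma_r, V_n(\mathcal O_L))^\ord$, which in turn rests on the canonical ordinary/nilpotent decomposition for compact $\Z_p$-modules recalled in \S \ref{sec3.5}; since $H^1(\Gamma_r, V_n(\mathcal O_L))$ is a finitely generated $\mathcal O_L$-module and hence profinite, this hypothesis is automatically satisfied, so I expect no serious technical obstacle in executing this plan.
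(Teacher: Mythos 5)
Your proposal is correct and follows essentially the same route as the paper: write $\boldsymbol\Phi=\boldsymbol\Psi|T_p^m$ using invertibility of $T_p$ on $\W(\mathcal O)^\ord$, deduce $\rho_{n,\epsilon}(\boldsymbol\Psi)=0$ from the intertwining relation \eqref{Tp-Up-indef} and the invertibility of $U_p$ on the ordinary part of the target, and then apply Lemma \ref{lemmaA3}. If anything, your version is slightly more careful than the paper's, which leaves the $U_p$-invertibility step implicit.
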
 

\begin{proof} Choose $\boldsymbol{\Psi}$ such that $\boldsymbol{\Phi}=\boldsymbol{\Psi}|T_p^m$. Since $\rho_{n,\epsilon}(\boldsymbol{\Phi})=0$, we also have that $\boldsymbol{\Psi}$ is ordinary and $\rho_{n,\epsilon}(\boldsymbol{\Psi})=0$ (this argument will be used again in the proof of Lemma \ref{prop-kernel}). Lemma \ref{lemmaA3} then implies that $\sigma_{n,\epsilon}^{(m)}(\boldsymbol{\Phi})=\sigma_{n,\epsilon}^{(m)}(\boldsymbol{\Psi}|T_p^m)=0$. Finally, it can be checked that $b'$ is $\Gamma_m$-equivariant, which completes the proof. \end{proof}

\begin{lemma} \label{as-lemma}
Let $\boldsymbol\Phi\in\W(\mathcal O)^{\ord}$ and suppose that $\rho_{n,\epsilon}(\boldsymbol\Phi)=0$. If the residue field of $\mathcal O$ is sufficiently large then $\boldsymbol\Phi$ can be represented by a cocycle $\Phi$ such that $\rho_{n,\epsilon}(\Phi)=0$ as a cochain in $\Hom_{\Gamma_r}(F_1,V_n(\mathcal O))$.
\end{lemma}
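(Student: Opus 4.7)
The plan is to chain together the three preceding lemmas (\ref{lemmaA4}, \ref{lemma A1} and \ref{lemmaA2}) and then exploit the torsion-freeness of the target module $V_n(\mathcal O)$. To start, fix an integer $m\geq r$, which will allow us to regard every $\Gamma_r$-equivariant cochain as a $\Gamma_m$-equivariant one, so that the cochain identity of Lemma \ref{lemmaA2} takes place in an ambient module common to both sides of the equation.

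First I would apply Lemma \ref{lemmaA4} to the ordinary class $\boldsymbol\Phi$: since $\rho_{n,\epsilon}(\boldsymbol\Phi)=0$, one concludes that $\sigma_{n,\epsilon}^{(m)}(\boldsymbol\Phi)=0$ in $H^1(\Gamma_m,V_n(\mathcal O))$. Next, invoking Lemma \ref{lemma A1} (this is where the hypothesis on the residue field of $\mathcal O$ is used), I would choose a representative $\Phi\in\Hom_{\Gamma_0}(F_1,\D(\mathcal O))$ of $\boldsymbol\Phi$ for which the vanishing $\sigma_{n,\epsilon}^{(m)}(\Phi)=0$ already holds at the cochain level in $\Hom_{\Gamma_m}(F_1,V_n(\mathcal O))$. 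Applying $X_m$ preserves this identity, so Lemma \ref{lemmaA2} yields
\[ p^m\rho_{n,\epsilon}(\Phi)=\sigma_{n,\epsilon}^{(m)}(\Phi)\big|X_m=0 \]
as elements of $\Hom_{\Gamma_m}(F_1,V_n(\mathcal O))$.

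Finally, since $V_n(\mathcal O)$ is a free $\mathcal O$-module of finite rank, it is $p$-torsion-free; consequently the $\mathcal O$-module $\Hom_{\Gamma_m}(F_1,V_n(\mathcal O))$ is $p$-torsion-free as well. Therefore $p^m\rho_{n,\epsilon}(\Phi)=0$ forces $\rho_{n,\epsilon}(\Phi)=0$ as a cochain. Because $\Phi$ is $\Gamma_0$-equivariant and $\Z_p\times\Z_p^\times$ is stable under $\Gamma_r$, the cochain $\rho_{n,\epsilon}(\Phi)$ naturally lies in $\Hom_{\Gamma_r}(F_1,V_n(\mathcal O))$, which is the required conclusion.

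The only real subtlety, and the place where one must be careful, is to verify that the representative produced by Lemma \ref{lemma A1} — which a priori differs from the starting one by a coboundary valued in $\D(\mathcal O)$ — still fits into the formalism of Lemma \ref{lemmaA2}; but this is automatic since $\rho_{n,\epsilon}$ and $\sigma_{n,\epsilon}^{(m)}$ are defined on arbitrary $\D(\mathcal O)$-valued cochains, and the cochain identity of Lemma \ref{lemmaA2} is universal. Once this is granted, the argument is just a bookkeeping combination of the three earlier lemmas, with no new computation required.
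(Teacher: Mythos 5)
Your proposal is correct and follows essentially the same route as the paper: chain Lemma \ref{lemmaA4}, then Lemma \ref{lemma A1} to fix a representative with $\sigma_{n,\epsilon}^{(m)}(\Phi)=0$ at the cochain level, then the cochain identity of Lemma \ref{lemmaA2} to conclude. Your final step, dividing by $p^m$ via the torsion-freeness of $V_n(\mathcal O)$, is in fact a cleaner justification than the formula the paper writes at that point, but it is the same argument in substance.
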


\begin{proof} Since $\rho_{n,\epsilon}(\boldsymbol\Phi)=0$, it follows from Lemma \ref{lemmaA4} that $\sigma_{n,\epsilon}^{(m)}(\boldsymbol{\Phi})=0$. By Lemma \ref{lemma A1} one can choose a representative $\Phi$ of $\boldsymbol\Phi$ such that $\sigma_{n,\epsilon}^{(m)}({\Phi})=0$ as a cochain. Now Lemma \ref{lemmaA2} shows that 
\[ \rho_{n,\epsilon}(\Phi)=\rho_{n,\epsilon}(\Phi)|wT_p^{-m}=0 \] 
in $\Hom_{\Gamma_r}(F_1,V_n(\mathcal O))$. \end{proof}

We are now going to combine Lemma \ref{lemma-integration-II} with Lemma \ref{as-lemma} to study the kernel of the specialization map. 

\begin{lemma} \label{prop-kernel} 
Fix $\kappa\in\mathcal A(\tilde\Lambda)$ of weight $k$ and a character $\epsilon$ factoring through $(\Z/p^r\Z)^\times$ for some integer $r\geq1$. Set $n:=k-2$. Suppose that the residue field of $\mathcal O$ is sufficiently large, so that Lemma \ref{as-lemma} can be applied. Then the map $\rho_{n,\epsilon}^\ord$ induces an injective, $\mathfrak h^D_{\rm univ}[\iota]$-equivariant homomorphism of $\tilde\Lambda/P_\kappa\tilde\Lambda$-modules
\[ \rho_{n,\epsilon}^\ord:\W(\mathcal O)^\ord/P_\kappa\W(\mathcal O)^\ord\;\longmono\;H^1\bigl(\Gamma_r,V_n(\mathcal O)\bigr)^\ord. \] 
\end{lemma}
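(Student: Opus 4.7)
The plan is to establish both parts of the claim: that $\rho_{n,\epsilon}^\ord$ kills $P_\kappa\W(\mathcal O)^\ord$, so that it descends to the quotient, and that the induced map is injective. The $\mathfrak h^D_{\rm univ}[\iota]$-equivariance and $\tilde\Lambda/P_\kappa\tilde\Lambda$-linearity of the induced map are automatic from the $\tilde\Lambda$-linearity and Hecke equivariance of $\rho_{n,\epsilon}$ recorded in \S \ref{specialization-subsec}.

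For the vanishing on $P_\kappa\W(\mathcal O)^\ord$, I appeal to the implication $(1)\Rightarrow(3)$ of Lemma \ref{lemma-integration-II}: any $\boldsymbol\Phi\in P_\kappa\W(\mathcal O)$ admits a cocycle representative $\Phi$ such that $\int_\X\varphi\,d\Phi_\gamma=0$ for every function $\varphi:\X\to\mathcal O$ homogeneous of degree $\chi(t):=\epsilon(t)t^n$. The integrand $\varphi_P(x,y):=\mathbf{1}_{\Z_p\times\Z_p^\times}(x,y)\epsilon(y)P(x,y)$ defining $\rho_{n,\epsilon}(\Phi_\gamma)(P)$ is such a function, as the characteristic function of $\Z_p\times\Z_p^\times$ is $\Z_p^\times$-invariant and $\epsilon(ty)P(tx,ty)=\chi(t)\epsilon(y)P(x,y)$ for $t\in\Z_p^\times$ and $P\in P_n(\mathcal O)$. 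Hence $\rho_{n,\epsilon}(\boldsymbol\Phi)=0$, and a fortiori $\rho_{n,\epsilon}^\ord$ vanishes on $P_\kappa\W(\mathcal O)^\ord$.

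For injectivity, let $\boldsymbol\Phi\in\W(\mathcal O)^\ord$ with $\rho_{n,\epsilon}^\ord(\boldsymbol\Phi)=0$; the goal is to verify condition $(4)$ of Lemma \ref{lemma-integration-II}. Fix $m\geq 1$ and use the invertibility of $T_p$ on $\W(\mathcal O)^\ord$ to write $\boldsymbol\Phi=\boldsymbol\Psi|T_p^m$ with $\boldsymbol\Psi\in\W(\mathcal O)^\ord$; by \eqref{Tp-Up-indef} and the invertibility of $U_p$ on $H^1(\Gamma_r,V_n(\mathcal O))^\ord$, also $\rho_{n,\epsilon}(\boldsymbol\Psi)=0$. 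The hypothesis on the residue field of $\mathcal O$ lets me invoke Lemma \ref{as-lemma}, which supplies a cocycle representative $\Psi$ with $\rho_{n,\epsilon}(\Psi)=0$ as a cochain, i.e.,
\[ \int_{\Z_p\times\Z_p^\times}\epsilon(y)Q(x,y)\,d\Psi_\delta=0 \]
for every $\delta\in\Gamma_0$ and every $Q\in P_n(\mathcal O)$. Take $\Phi:=\Psi|T_p^m$ as a cocycle representative of $\boldsymbol\Phi$. Expanding $\int_\X\psi_{m,\chi}\,d\Phi_\gamma$ via \eqref{Hecke-indef} along a coset decomposition $T_p^m=\coprod_i\Gamma_0\alpha_i$, a local analysis at $p$ in the standard form $i_p(\alpha_i)=u_i\smallmat{p^{a_i}}{b_i}{0}{p^{m-a_i}}$ isolates the single coset with $a_i=b_i=0$, i.e., $i_p(\alpha_0)=u_0\smallmat{1}{0}{0}{p^m}$; on that coset the integral collapses, via the change of variables $\alpha_0^*(x,y)=(p^m x,y)$ (after absorbing the unit $u_0$), to $\int_{\Z_p\times\Z_p^\times}\epsilon(y)y^n\,d\Psi_{t_0(\gamma)}$, which vanishes by the cochain vanishing applied to $Q(x,y)=y^n$. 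Condition $(4)$ thus holds, yielding $\boldsymbol\Phi\in P_\kappa\W(\mathcal O)$; combining this with the $\tilde\Lambda$-stability of the ordinary decomposition $\W(\mathcal O)=\W(\mathcal O)^\ord\oplus\W(\mathcal O)^{\rm nil}$, I conclude $\boldsymbol\Phi\in P_\kappa\W(\mathcal O)^\ord$.

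The main technical obstacle is the coset bookkeeping in the Hecke expansion: verifying that only the expected coset contributes a non-zero integral and handling the $\GL_2(\Z_p)$-unit factors $u_i$ arising in the local decomposition \eqref{Tp} of $T_p^m$. These units on the $p$-component must be absorbed through the $\Gamma_0$-equivariance of $\Psi$, using strong approximation to lift suitable elements of $\GL_2(\Z_p/p^r\Z_p)$ to $\Gamma_0$, so that the contributing integral genuinely reduces to the clean $\epsilon(y)$-weighted polynomial integration over $\Z_p\times\Z_p^\times$ to which the cochain-level vanishing of $\rho_{n,\epsilon}(\Psi)$ applies.
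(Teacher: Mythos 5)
Your overall strategy coincides with the paper's own proof: the containment $P_\kappa\W(\mathcal O)^\ord\subset\ker(\rho_{n,\epsilon}^\ord)$ via the implication $(1)\Rightarrow(3)$ of Lemma \ref{lemma-integration-II}, and, for the converse, the passage to $\boldsymbol\Psi$ with $\boldsymbol\Psi|T_p^m=\boldsymbol\Phi$, the invertibility of $U_p$ on the ordinary part, the appeal to Lemma \ref{as-lemma}, and the coset analysis isolating the single contributing class $\smallmat 100{p^m}$ in $T_p^m$ are exactly the steps of the printed argument; the coset bookkeeping you single out as the main obstacle is handled correctly.

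The gap is at the very end, where you write that ``Condition $(4)$ thus holds.'' Condition $(4)$ of Lemma \ref{lemma-integration-II} requires a \emph{single} cocycle representative $\Phi$ of $\boldsymbol\Phi$ satisfying $\int_\X\psi_{m,\chi}\,d\Phi_\gamma=0$ simultaneously for \emph{all} $m\geq1$. Your construction produces, for each fixed $m$, a representative $\Phi_m:=\Psi^{(m)}|T_p^m$ (with $\Psi^{(m)}$ the output of Lemma \ref{as-lemma} applied to the class $\boldsymbol\Psi^{(m)}$ determined by $\boldsymbol\Psi^{(m)}|T_p^m=\boldsymbol\Phi$) for which the vanishing holds only for that one $m$. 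These representatives differ by coboundaries as $m$ varies, and the functional $\Phi\mapsto\int_\X\psi_{m,\chi}\,d\Phi_\gamma$ is not invariant under adding a coboundary (since $\psi_{m,\chi}$ is not $\Gamma_0$-invariant), so the conclusions for different $m$ cannot simply be superposed into condition $(4)$. The paper closes this gap with a compactness argument: one first observes that $\int_\X\psi_{m,\chi}\,d(\Phi_{m'})_\gamma=0$ also for all $m'\geq m$ (the cosets of $T_p^{m'}$ that survive over $\U(m)$ are again killed by the cochain-level vanishing of $\rho_{n,\epsilon}(\Psi^{(m')})$), and then uses the compactness of $\Hom_{\Gamma_0}(F_1,\D(\mathcal O))$ and of the subspace of coboundaries to extract a limit $\Phi_\infty$ of a subsequence of the $\Phi_m$; this limit is still a cocycle representing $\boldsymbol\Phi$ and, by continuity of $\nu\mapsto\int_\X\psi_{m,\chi}\,d\nu$, satisfies the required vanishing for every $m$ at once. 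Without some such step the implication $(4)\Rightarrow(1)$ cannot be invoked, so your argument as written does not yet prove injectivity.
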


\begin{proof} Since the integrand $\epsilon(y)P(x,y)$ appearing in $\rho_{n,\epsilon}^\ord$ is homogeneous of degree $\kappa$, the inclusion $\ker(\rho_{n,\epsilon}^\ord)\supset P_\kappa\W(\mathcal O)^\ord$ follows from the implication $(1)\Rightarrow(3)$ in Lemma \ref{lemma-integration-II}.

Now we show the opposite inclusion. Let $\boldsymbol\Phi\in\ker(\rho_{n,\epsilon}^\ord)$ and represent it by a cocycle in $\Phi\in\Hom_{\Gamma_0}(F_1,\D(\mathcal O))$. Fix an integer $m\geq1$, choose $\boldsymbol\Psi\in\W(\mathcal O)^\ord$ such that $\boldsymbol\Psi|T_p^m=\boldsymbol\Phi$ (this is possible because $T_p$ induces an isomorphism on $\W(\mathcal O)^\ord$) and represent $\boldsymbol\Psi$ by a cocycle $\Psi$. Write $T_p^m$ as
\[ T_p^m=\coprod_i\Gamma_0 g_{m,i} \]
where the $g_{m,i}$ are suitable products of $m$ elements, not necessarily distinct, chosen in the set ${\{g_a\}}_{a=0,\dots,p-1,\infty}$ defined in \eqref{Tp}. Therefore, for all $f\in F_1$ we have 
\[ \int_\X\psi_{m,\kappa}(x,y)d\Phi(f)=\sum_i\int_\X\psi_{m,\kappa}\bigl(g_{m,i}^*(x,y)\bigr)d\Psi(f_i), \] 
where the $f_i$ are suitable elements in $F_1$ which can be made explicit using the definition of the Hecke actions given in terms of elements in $Z^1(\Gamma_0,\D(\mathcal O))$ and the identification between $Z^1(\Gamma_0,\D(\mathcal O))$ and $\Hom_{\Gamma_0}(F_1,\D(\mathcal O))$; however, we will not need this description in the following.
 
Now $\psi_{m,\kappa}\bigl(g_{m,i}^*(x,y)\bigr)=0$ unless $g_{m,i}=\smallmat 100{p^m}$. More precisely, since $\Psi(f_i)$ is supported on $\X$ and $\psi_{m,\kappa}$ is supported on the set $\U(m)$ defined in \eqref{U(m)}, the above integral does not vanish only if $\U(m)\cap\X\neq\emptyset$. An easy calculation shows that $g_{m,i}^*\X\cap\X\neq\emptyset$ if and only if $g_{m,i}=\smallmat 100{p^m}$, and in this case one has $\smallmat 100{p^m}^{\!*}\X\subset\U(m)$. Hence the $i$-th summand in the above sum is equal to
\[ \int_\X\psi_{\kappa,m}(p^mx,y)d\Psi(f_i)=\int_\X\kappa(y)d\Psi(f_i)=\int_\X\epsilon(y)y^nd\Psi(f_i). \]
On the other hand, by \eqref{Tp-Up-indef} there are equalities 
\[ 0=\rho_{n,\epsilon}^\ord(\boldsymbol\Phi)=\rho_{n,\epsilon}^\ord(\boldsymbol\Psi|T_p^m)=\rho_{n,\epsilon}^\ord(\boldsymbol\Psi)|U_p^m. \] 
Since $U_p$ acts invertibly on $H^1(\Gamma_r,V_n(F_f))^\ord$, it follows that $\rho_{n,\epsilon}^\ord(\boldsymbol\Psi)=0$. 

By invoking Lemma \ref{as-lemma}, we choose a representative $\Psi$ of $\boldsymbol\Psi$ such that $\rho_{n,\epsilon}(\Psi)=0$ in $\Hom_{\Gamma_r}(F_1,V_n(F_\kappa))$, and then we conclude that 
\[ \int_\X\epsilon(y)y^nd\Psi(f)=0 \] 
for all $f\in F_1$. Define $\Phi_m:=\Psi|T_p^m$, which is a representative of $\boldsymbol\Phi=\boldsymbol\Psi|T_p^m$. Then, since the map $\rho_{n,\epsilon}$ is compatible with the action of $T_p$, we conclude that $\rho_{n,\epsilon}(\Phi_m)=0$ in $\Hom_{\Gamma_r}(F_1,\D(\mathcal O))$, and so we get 
\[ \int_\X \psi_{m,\kappa}(x,y)d\Phi_m(f)=0 \] 
for all $f\in F_1$. Since $\Hom_{\Gamma_0}(F_1,\D(\mathcal O))$ is compact, we can assume that the sequence ${(\Psi_m)}_{m\geq1}$ has a limit, which we denote $\Psi_\infty$. But the coboundary map is continuous, hence $\Psi_\infty$ is also a cocycle. Since coboundaries form a compact subspace of the group of cocycles, $\Psi_\infty$ still represents $\boldsymbol\Psi$. Finally, since the topology on the space of measures is induced by pointwise convergence on continuous functions, we see that 
\[ \int_\X \psi_{m,\kappa}(x,y)d\Phi_\infty(f)=0 \] 
for all $f\in F_1$ and all integers $m\geq1$. From the equivalence of conditions (4') above and (4) in Lemma \ref{lemma-integration-II} and the implication $(4)\Rightarrow(1)$ in Lemma \ref{lemma-integration-II} it follows that $\boldsymbol\Phi\in P_\kappa\W(\mathcal O)^\ord$, as was to be shown. \end{proof}

\begin{lemma}
Fix $\kappa\in\mathcal A(\tilde\Lambda)$ of weight $k$ and a character $\epsilon$ factoring through $(\Z/p^r\Z)^\times$ for some integer $r\geq1$. Set $n:=k-2$. The map $\rho_{n,\epsilon}^\ord$ induces an injective, $\mathfrak h^D_{\rm univ}[\iota]$-equivariant homomorphism of $\tilde\Lambda/P_\kappa\tilde\Lambda$-modules
\[ \rho_{n,\epsilon}^\ord:\W^\ord/P_\kappa\W^\ord\;\longmono\;H^1\bigl(\Gamma_r,V_n(\mathcal O)\bigr)^\ord. \] 
\end{lemma}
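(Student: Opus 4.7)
The plan is to deduce the statement from Lemma \ref{prop-kernel} by a faithfully flat base change, whose sole purpose is to eliminate the hypothesis that the residue field of $\mathcal O$ be sufficiently large (which, as pointed out in the remark following Lemma \ref{lemma A1}, was used only in the proof of that lemma).

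First, I would pick a finite \emph{unramified} extension $F'/F$ whose residue field is large enough to apply Lemma \ref{prop-kernel} with the ring of integers $\mathcal O'$ of $F'$ in place of $\mathcal O$. Since $\mathcal O'$ is free, hence faithfully flat, over $\mathcal O$, continuous cohomology with profinite coefficients commutes with the functor $-\otimes_{\mathcal O}\mathcal O'$, and via Shapiro's isomorphism \eqref{shapiro} I would identify
\[ \W(\mathcal O')\simeq \W(\mathcal O)\otimes_{\mathcal O}\mathcal O',\qquad H^1\bigl(\Gamma_r,V_n(\mathcal O')\bigr)\simeq H^1\bigl(\Gamma_r,V_n(\mathcal O)\bigr)\otimes_{\mathcal O}\mathcal O', \]
both $\mathfrak{h}^D_{\rm univ}[\iota]$-equivariantly and in a way compatible with the specialization map $\rho_{n,\epsilon}$.

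Second, I would check that ordinary submodules and the quotient by $P_\kappa$ also commute with this base change. The first point holds because the canonical decomposition $A=A^\ord\oplus A^{\rm nil}$ recalled in \S\ref{sec3.5} is Hecke-equivariant and therefore preserved under tensoring with a flat $\mathcal O$-algebra; the second point is just flatness of $\mathcal O'/\mathcal O$. Putting everything together, one obtains a commutative square
\[ \xymatrix{\W(\mathcal O)^\ord/P_\kappa\W(\mathcal O)^\ord \ar[r]^-{\rho_{n,\epsilon}^\ord}\ar[d]& H^1\bigl(\Gamma_r,V_n(\mathcal O)\bigr)^\ord \ar[d]\\ \W(\mathcal O')^\ord/P_\kappa\W(\mathcal O')^\ord \ar[r]^-{\rho_{n,\epsilon}^\ord}& H^1\bigl(\Gamma_r,V_n(\mathcal O')\bigr)^\ord} \]
whose bottom row is injective by Lemma \ref{prop-kernel}, and in which the vertical arrows realize the bottom row as the top row tensored with $\mathcal O'$.

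Third, by faithful flatness of $\mathcal O'$ over $\mathcal O$, injectivity of a map of $\mathcal O$-modules can be tested after $-\otimes_{\mathcal O}\mathcal O'$, so the injectivity of the bottom map forces the injectivity of the top map. The $\mathfrak{h}^D_{\rm univ}[\iota]$-equivariance and the $\tilde\Lambda/P_\kappa\tilde\Lambda$-linearity of $\rho_{n,\epsilon}^\ord$ are inherited directly from the corresponding properties already established for $\mathcal O'$ (equivalently, they are formal consequences of the definition of $\rho_{n,\epsilon}$ and of the fact that $P_\kappa$ is annihilated on the target). I expect the only delicate step to be the verification that $\W$ and its ordinary part are compatible with finite unramified base change in the required sense; but this reduces, via \eqref{shapiro}, to the standard fact that cohomology of a profinite group with finitely generated profinite coefficients commutes with extension of scalars along a finite flat map of coefficient rings, together with the Hecke-equivariance of the ordinary splitting.
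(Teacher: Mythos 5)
Your proposal is correct and follows essentially the same route as the paper: the paper's proof likewise enlarges $\mathcal O$ to a sufficiently large (unramified) extension so that Lemma \ref{prop-kernel} applies, identifies $\W(\mathcal O)\simeq\W\otimes_{\mathcal O_f}\mathcal O$ and $H^1(\Gamma_r,V_n(\mathcal O))\simeq H^1(\Gamma_r,V_n(\mathcal O_f))\otimes_{\mathcal O_f}\mathcal O$ Hecke-equivariantly via the universal coefficient theorem, and concludes by faithful flatness. Your additional checks (compatibility of the ordinary splitting and of the quotient by $P_\kappa$ with the base change) are exactly the details the paper leaves implicit in ``the result follows.''
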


\begin{proof} Fix $\mathcal O$ sufficiently large so that the above lemma can be applied. The extension $\mathcal O_f\subset\mathcal O$ is fully faithful. Moreover, there are isomorphisms  
\[ \W\otimes_{\mathcal O_f}\mathcal O=H^1(\Gamma_0,\D)\otimes_{\mathcal O_f}\mathcal O\simeq H^1(\Gamma_0\backslash\mathcal H,\D)\otimes_{\mathcal O_f}\mathcal O\simeq H^1(\Gamma_0\backslash\mathcal H,\D(\mathcal O))\simeq\W(\mathcal O) \] 
and 
\[ H^1\bigl(\Gamma_r,V_n(\mathcal O_f)\bigr)\otimes_{\mathcal O_f}\mathcal O\simeq H^1\bigl(\Gamma_r,V_n(\mathcal O)\bigr) \] coming from the universal coefficient theorem for cohomology (see \cite[Theorem 15.3]{br}) which are compatible with the 
action of Hecke operators. The result follows. \end{proof}

For any $\mathcal O_f[\mathfrak h^D_{\rm univ}]$-module $M$ and any ring homomorphism $\vartheta:\mathfrak h^D_{\rm univ}\rightarrow R$ define
\[ M^\vartheta:=M\otimes_{\fr h_{\rm univ}^D}R, \] 
the tensor product being taken with respect to $\vartheta$. Let $\kappa\in\mathcal A(\mathcal R)$ and define the $F_\kappa$-vector space
\[ \W_\kappa^\ord:=\Big(H^1\bigl(\Gamma_{m_\kappa},V_{n_\kappa}(F_\kappa)\bigr)^\ord\Big)^{f_\kappa}. \] 
Let $h_\kappa$ denote the composition of $h:\mathfrak h^D_{\rm univ}\rightarrow\mathfrak h_\infty^{D,\ord}$ with the canonical map $\mathfrak h_\infty^{D,\ord}\rightarrow\mathcal R_{P_\kappa}$. Then we can consider the $\mathcal R_{P_\kappa}$-submodule $\W_{h_\kappa}^\ord$ of $\W^\ord_{\mathcal R_{P_\kappa}}$ defined by 
\[ \W_{h_\kappa}^\ord:=\bigl(\W^\ord_{\mathcal R_{P_\kappa}}\bigr)^{h_\kappa}. \]
The action of the involution $\iota$ on a $\Q_p$-vector space $M$ induces a splitting $M=M^+\oplus M^-$, where $M^\pm$ are the $\pm$-eigenspaces for $\iota$. 

\begin{proposition} \label{propA4}
Let $\kappa\in\mathcal A(\mathcal R)$ have weight $k=k_\kappa$ and character $\epsilon=\epsilon_\kappa$. Set $n:=k-2$. The map $\rho_{n,\epsilon}^\ord$ of Lemma \ref{prop-kernel} induces an injective homomorphism of $F_\kappa$-vector spaces
\[ \rho_\kappa:\W_{h_\kappa}^{\ord,\pm}/P_\kappa\W_{h_\kappa}^{\ord,\pm}\;\longmono\;\W_\kappa^{\ord,\pm}. \]
\end{proposition}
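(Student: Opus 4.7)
The plan is to derive the proposition from Lemma \ref{prop-kernel} by successively localizing at $P_\kappa$, extracting the $h_\kappa$-eigencomponent, and splitting under the involution $\iota$; the final identification of the target will come from Proposition \ref{7.2-hida}.

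I would start from the $\mathfrak h^D_{\rm univ}[\iota]$-equivariant injection
\[ \rho_{n,\epsilon}^\ord : \W^\ord/P_\kappa \W^\ord \;\longmono\; H^1\bigl(\Gamma_r, V_n(\mathcal O)\bigr)^\ord \]
provided by Lemma \ref{prop-kernel}, where $r=m_\kappa$ is chosen so that $\epsilon=\epsilon_\kappa$ factors through $(\Z/p^r\Z)^\times$. Since $\mathcal R$ is the integral closure of $\Lambda$ in the primitive component $\mathcal K$ corresponding to $f$, the localization $\mathcal R_{P_\kappa}$ is a flat $\mathcal R$-algebra and, via $\tilde\Lambda\to\mathfrak h^D_{\rm univ}\xrightarrow{h}\mathfrak h_\infty^{D,\ord}\to\mathcal R_{P_\kappa}$, base change preserves injectivity. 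Restricting to the $h_\kappa$-submodule on both sides --- a direct summand because the primitive factor $\mathcal K$ is a field and hence cuts out an idempotent in $\mathfrak h_\infty^{D,\ord}\otimes_\Lambda\mathcal L$ --- produces an injection of the form
\[ \W^\ord_{h_\kappa}/P_\kappa \W^\ord_{h_\kappa} \;\longmono\; \bigl(H^1(\Gamma_r, V_n(\mathcal O))^\ord\bigr)_{h_\kappa} \otimes_{\mathcal R_{P_\kappa}} F_\kappa. \]

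Next I would split under $\iota$. Because $p\geq 5$, the $\pm$-projectors $(1\pm\iota)/2$ exist integrally, and since $\iota$ commutes with every element of $\mathfrak h^D_{\rm univ}$ by \S \ref{App1}, taking $\pm$-parts preserves both Hecke equivariance and injectivity. The target after this step is $\bigl(H^1(\Gamma_{m_\kappa}, V_{n_\kappa}(F_\kappa))^\ord\bigr)^{f_\kappa,\pm}$, which is exactly $\W_\kappa^{\ord,\pm}$ by definition, and Proposition \ref{7.2-hida} identifies it as a one-dimensional $F_\kappa$-vector space. Stringing the steps together yields the desired injection
\[ \rho_\kappa : \W_{h_\kappa}^{\ord,\pm}/P_\kappa \W_{h_\kappa}^{\ord,\pm} \;\longmono\; \W_\kappa^{\ord,\pm}. \]

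The main obstacle I anticipate is the bookkeeping required to verify that localization at $P_\kappa$, passage to the $h_\kappa$-eigencomponent, reduction modulo $P_\kappa$, and extraction of the $\pm$-part interact transparently and that injectivity survives each step. The essential input is that $\mathcal R_{P_\kappa}$ is a discrete valuation ring sitting inside the field factor $\mathcal K$ of $\mathfrak h_\infty^{D,\ord}\otimes_\Lambda\mathcal L$, which provides the required idempotents and flatness; granted this structural fact, each manipulation is formal, and the proof reduces entirely to Lemma \ref{prop-kernel} combined with Proposition \ref{7.2-hida}.
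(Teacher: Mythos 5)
Your proposal follows essentially the same route as the paper: start from the $\mathfrak h^D_{\rm univ}[\iota]$-equivariant injection of Lemma \ref{prop-kernel}, base change to $\mathcal R_{P_\kappa}$-coefficients, and only at the end cut out the $h_\kappa$-eigencomponent and the $\pm$-part. There is, however, one step where your justification does not do the work required. The base change in question is $-\otimes_{\tilde\Lambda}\mathcal R_{P_\kappa}$ (recall $\W_R:=\W\otimes_{\tilde\Lambda}R$), so the flatness you need is that of $\mathcal R_{P_\kappa}$ over $\tilde\Lambda$, not over $\mathcal R$; the paper obtains it by first passing to $\Lambda_{\p_\kappa}$ with $\p_\kappa:=P_\kappa\cap\Lambda$ (using $\tilde\Lambda/\p_\kappa\tilde\Lambda\simeq\Lambda/\p_\kappa\Lambda$ and flatness of localization) and then using that $\mathcal R_{P_\kappa}$ is flat over the discrete valuation ring $\Lambda_{\p_\kappa}$. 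More substantively, Lemma \ref{prop-kernel} identifies the kernel of $\rho_{n,\epsilon}^\ord$ with $\p_\kappa\W^\ord$, i.e.\ it controls the quotient by the $\Lambda$-level arithmetic prime, whereas the proposition concerns the quotient by the $\mathcal R$-ideal $P_\kappa$. For $\rho_\kappa$ even to be well defined modulo $P_\kappa$ one needs $P_\kappa\mathcal R_{P_\kappa}=\p_\kappa\mathcal R_{P_\kappa}$, that is, that $\mathcal R_{P_\kappa}$ is unramified over $\Lambda_{\p_\kappa}$: this is Hida's theorem (\cite[Corollary 1.4]{hida-inv}) and is \emph{not} a formal consequence of $\mathcal R_{P_\kappa}$ being a discrete valuation ring, contrary to your closing claim that the remaining manipulations are purely formal. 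Once this input is supplied, the rest of your argument (restriction to the eigencomponent, which is a direct summand, the $\pm$-splitting using $p\geq5$, and the coefficient extension $H^1(\Gamma_r,V_n(F_f))\otimes_{F_f}F_\kappa\simeq H^1(\Gamma_r,V_n(F_\kappa))$ via universal coefficients) matches the paper's proof. Note finally that Proposition \ref{7.2-hida} is not needed here: the one-dimensionality of $\W_\kappa^{\ord,\pm}$ plays no role in the injectivity statement and is only used later, in the dimension count of Theorem \ref{control-thm}.
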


\begin{proof} Define $\p_\kappa:=P_\kappa\cap\Lambda$. Using the canonical projection $\tilde\Lambda\rightarrow\Lambda$ we may view $\p_\kappa$ as an element in $\mathcal A(\tilde\Lambda)$. We deduce from Lemma \ref{prop-kernel} the existence of an injective homomorphism of $\tilde\Lambda/\p_\kappa\tilde\Lambda$-modules 
\[ \rho_{n,\epsilon}^\ord:\W^\ord/{\p_\kappa}\W^\ord\;\longmono\;H^1\bigl(\Gamma_r,V_n(\mathcal O_f)\bigr)^\ord. \]
Now $\tilde\Lambda/\p_\kappa\tilde\Lambda\simeq\Lambda/\p_\kappa\Lambda$, thus we get an injective homomorphism of $\Lambda$-modules  
\[ \rho_{n,\epsilon}^\ord:\W^\ord_\Lambda/{\p_\kappa}\W^\ord_\Lambda\;\longmono\;H^1\bigl(\Gamma_r,V_n(\mathcal O_f)\bigr)^\ord. \]
Since $\Lambda_{\p_\kappa}$ is flat over $\Lambda$, we also obtain an injective homomorphism of $\Lambda_{\p_\kappa}/\p_\kappa\Lambda_{\p_\kappa}$-vector spaces  
\[ \rho_{n,\epsilon}^\ord:\W_{\Lambda_{\p_\kappa}}^\ord/{\p_\kappa}\W_{\Lambda_{\p_\kappa}}^\ord\;\longmono\;H^1\bigl(\Gamma_r,V_n(F_f)\bigr)^\ord. \]
Now $\mathcal R_{P_\kappa}$ and $\Lambda _{\p_\kappa}$ are normal domains and $\mathcal R_{P_\kappa}$, being unramified over $\Lambda_{\p_\kappa}$ thanks to \cite[Corollary 1.4]{hida-inv}, is flat over $\Lambda_{\p_\kappa}$. Using the universal coefficient theorem for cohomology (see \cite[Theorem 15.3]{br}) and recalling that $H^1(\Gamma_r,V_n(F))$ is canonically isomorphic to $H^1(\Gamma_r\backslash\mathcal H,V_n(F))$, we get an isomorphism 
\[ H^1\bigl(\Gamma_r,V_n(F_f)\bigr)\otimes_{F_f}F_\kappa\simeq H^1\bigl(\Gamma_r,V_n(F_\kappa)\bigr), \] 
from which we deduce an injective homomorphism of $F_\kappa$-vector spaces
\[ \rho_\kappa^\ord:\W ^\ord_{\mathcal R_{P_\kappa}}/P_\kappa\W ^\ord_{\mathcal R_{P_\kappa}}\;\longmono\;H^1\bigl(\Gamma_r,V_n(F_\kappa)\bigr)^\ord. \] 
Thanks to the $\mathfrak h^D_{\rm univ}[\iota]$-equivariance of $\rho_{n,\epsilon}^\ord$, restricting $\rho_\kappa^\ord$ to $\W_{h_\kappa}^{\ord,\pm}$ gives the searched-for injection. \end{proof}

\subsubsection{Dimension bounds} Now we compute the dimensions of the source and the target of the specialization map. Recall that $\mathcal L$ (respectively, $\mathcal K$) is the fraction field of $\Lambda$ (respectively, $\mathcal R$) and that there is a canonical decomposition 
\begin{equation} \label{decomp}
\mathfrak h_\infty^{D,\ord}\otimes_\Lambda\mathcal L\simeq\mathcal K\oplus\mathcal N
\end{equation}
where $\mathcal N$ is a sum of finitely many fields and of a non-reduced part. Define
\[ h_\mathcal R:\mathfrak h^D_{\rm univ}\overset{h}\longrightarrow\mathfrak h_\infty^D\longrightarrow\mathfrak h^{D,\ord}_\infty\xrightarrow{f_\infty}\mathcal R. \]

\begin{proposition} \label{prop-2}
The module $\W^\ord_\mathcal K$ is a $2$-dimensional vector space over $\mathcal K$ and each eigenmodule for $\iota$ has dimension $1$. Moreover, the action of $\mathfrak h^D_{\rm univ}$ on $\W^\ord_\mathcal K$ factors through $h_\mathcal R$. 
\end{proposition}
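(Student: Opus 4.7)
The plan is to verify the three assertions in turn. By construction, $\W^\ord_\mathcal K$ is the direct summand of $\W^\ord\otimes_\Lambda\mathcal L$ cut out by the idempotent attached to the primitive component $\mathcal K$ in the decomposition \eqref{decomp}, so the Hecke algebra $\mathfrak h^{D,\ord}_\infty$ acts on $\W^\ord_\mathcal K$ via its projection onto $\mathcal K$. Because the Hecke operators and group-like elements generating $\mathfrak h^D_{\rm univ}$ are integral over $\Lambda$, their images in $\mathcal K$ lie in the integral closure $\mathcal R$, so the composition $\mathfrak h^D_{\rm univ}\xrightarrow{h}\mathfrak h^{D,\ord}_\infty\to\mathcal K$ factors through $\mathcal R$. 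This yields the claimed factorization through $h_\mathcal R$ and disposes of the last assertion.

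Since $\iota$ commutes with the Hecke operators and with $\Lambda$, it descends to an involution of $\W^\ord_\mathcal K$, producing $\W^\ord_\mathcal K=\W^{\ord,+}_\mathcal K\oplus\W^{\ord,-}_\mathcal K$; it therefore suffices to show that each eigenspace is one-dimensional over $\mathcal K$. For the upper bound, fix any $\kappa\in\mathcal A(\mathcal R)$. By Proposition \ref{propA4} the specialization map yields an injection
\[ \W^{\ord,\pm}_{h_\kappa}\big/P_\kappa\W^{\ord,\pm}_{h_\kappa}\;\hookrightarrow\;\W^{\ord,\pm}_\kappa, \]
whose target has $F_\kappa$-dimension one by Proposition \ref{7.2-hida}. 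Since $\mathcal R_{P_\kappa}$ is a discrete valuation ring (as $\mathcal R$ is a normal $\Lambda$-algebra of Krull dimension two and $P_\kappa$ has height one), Nakayama's lemma forces $\W^{\ord,\pm}_{h_\kappa}$ to be a cyclic $\mathcal R_{P_\kappa}$-module. Passing to fraction fields, $\W^{\ord,\pm}_\mathcal K\simeq\W^{\ord,\pm}_{h_\kappa}\otimes_{\mathcal R_{P_\kappa}}\mathcal K$ has $\mathcal K$-dimension at most one.

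The lower bound requires the non-vanishing of $\W^{\ord,\pm}_\mathcal K$. For this one appeals to the quaternionic analog of Hida's duality theorem: $\W^{\ord,\pm}$ is a faithful module over $\mathfrak h^{D,\ord}_\infty$, a fact which can be extracted by taking the inverse limit over $r$ in Shapiro's isomorphism \eqref{shapiro} and transporting the finite-level rank-one statement of Proposition \ref{7.2-hida} through the Jacquet--Langlands correspondence. Consequently $\W^{\ord,\pm}\otimes_\Lambda\mathcal L$ is a faithful module over $\mathfrak h^{D,\ord}_\infty\otimes_\Lambda\mathcal L\simeq\mathcal K\oplus\mathcal N$, so its $\mathcal K$-component must be non-trivial. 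Combined with the upper bound, this gives $\dim_\mathcal K\W^{\ord,\pm}_\mathcal K=1$ for each sign, and hence $\dim_\mathcal K\W^\ord_\mathcal K=2$. The main obstacle is precisely this lower bound: Proposition \ref{propA4} provides only an injection, so the non-vanishing must be imported from the finite-level Hida theory, and one needs to verify carefully that the required faithfulness passes through the inverse limit and survives the Jacquet--Langlands transfer.
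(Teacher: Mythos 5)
Your overall architecture (upper bound via the injectivity of the specialization map plus Nakayama, lower bound via faithfulness over the big Hecke algebra) is reasonable, and your treatment of the factorization through $h_\mathcal R$ is fine. But the lower bound, which you yourself identify as ``the main obstacle,'' is exactly the content of the proposition, and your proposal does not prove it: you assert that the faithfulness of $\W^{\ord,\pm}$ over $\mathfrak h_\infty^{D,\ord}$ ``can be extracted by taking the inverse limit over $r$ in Shapiro's isomorphism and transporting the finite-level rank-one statement of Proposition \ref{7.2-hida} through the Jacquet--Langlands correspondence,'' but no such transport is carried out, and as described it would not work. Passing from the rank-one statement at each finite level to a statement about $\varprojlim_r H^1(\Gamma_r,\mathcal O_f)^\ord$ as a module over $\mathfrak h_\infty^{D,\ord}$ is precisely Hida's vertical control machinery: one needs the $\Lambda$-freeness of the Pontryagin dual of $\varinjlim_r H^1(\Gamma_r,F_f/\mathcal O_f)^{\ord,\pm}$ (Hida's Corollary 10.4) and then the duality argument of Hida's Theorem 12.1, with Proposition \ref{7.2-hida} substituted for Hida's eq.\ (7.6). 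This is what the paper's proof does, and it yields the stronger statement that $\W^{\ord,\pm}\otimes_\Lambda\Lambda_{P_\kappa}$ is \emph{free of rank one} over $\mathfrak h_\infty^{D,\ord}\otimes_\Lambda\Lambda_{P_\kappa}$, from which both bounds follow at once after tensoring with $\mathcal L$ and invoking the decomposition \eqref{decomp}. (The Jacquet--Langlands correspondence plays no role at this stage: Proposition \ref{7.2-hida} is already a statement on the quaternionic side.)

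Two further unstated hypotheses weaken the parts of your argument that are in principle salvageable. First, your Nakayama step requires $\W^{\ord,\pm}_{h_\kappa}$ to be finitely generated over $\mathcal R_{P_\kappa}$, which rests on the finite generation of $\W^\ord$ over $\Lambda$ --- again Hida's Corollary 10.4, which you never invoke. Second, your deduction that faithfulness of $\W^{\ord,\pm}$ over $\mathfrak h_\infty^{D,\ord}$ implies faithfulness of $\W^{\ord,\pm}\otimes_\Lambda\mathcal L$ over $\mathfrak h_\infty^{D,\ord}\otimes_\Lambda\mathcal L$ uses the identification $\End_\Lambda(M)\otimes_\Lambda\mathcal L\simeq\End_{\mathcal L}(M\otimes_\Lambda\mathcal L)$, which also needs $M$ finitely presented over $\Lambda$. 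Once these inputs are acknowledged, the cleanest path is the paper's: prove local freeness of rank one directly rather than assembling separate upper and lower bounds.
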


\begin{proof} For every integer $r\geq1$ let $X_r$ denote the compact Shimura curve $\Gamma_r\backslash\mathcal H$. Define the $p$-divisible abelian group 
\[ \mathcal V:=\varinjlim_r H^1(X_r,F_f/\mathcal O_f)^\ord=\varinjlim_rH^1(\Gamma_r,F_f/\mathcal O_f)^\ord, \]
where the inductive limit is taken with respect to the restriction maps. The Hecke operators $T_n$, $T_{n,n}$ and the involution $\iota$ act naturally on $\mathcal V$, since the Hecke action is compatible with the restriction maps (see \cite[(2.9 a,b) and (3.5)]{hida}). Consider the eigenmodules $\mathcal V^\pm$ for $\iota$ and define $V$ to be the Pontryagin dual of one of $\mathcal V^\pm$. Thanks to \cite[Corollary 10.4]{hida}, we know that $V$ is free of finite rank over $\Lambda$.

Replacing \cite[eq. (7.6)]{hida} with Proposition \ref{7.2-hida}, we can mimic the proof of \cite[Theorem 12.1]{hida} and show that there is an isomorphism $V\otimes_\Lambda\Lambda_{P_\kappa}\simeq\mathfrak h_\infty^{D,\ord}\otimes_\Lambda\Lambda_{P_\kappa}$ for all $\kappa\in\mathcal A(\Lambda)$. Now the Pontryagin dual of $\mathcal V$ is identified with the inverse limit of the cohomology groups $H^1(\Gamma_r,\mathcal O_f)^\ord$ with respect to the corestriction maps. On the other hand, Shapiro's Lemma \eqref{shapiro} is equivariant for the action of $\mathfrak h^D_{\rm univ}$. Therefore $\W^\ord\otimes_\Lambda\Lambda_{P_\kappa}$ is free of rank $2$ over $\mathfrak h_\infty^{D,\ord}\otimes_\Lambda\Lambda_{P_\kappa}$ for all $\kappa\in\mathcal A(\Lambda)$, and each eigenmodule for $\iota$ is free of rank $1$. Hence it follows that $\W_\mathcal L^\ord$ is free of rank $2$ over $\mathfrak h_\infty^{D,\ord}\otimes_\Lambda\mathcal L$, and each eigenspace for $\iota$ is free of rank $1$. In light of decomposition \eqref{decomp}, the proof is complete. \end{proof}

\subsubsection{Control Theorem} We are now ready to state and prove the main result of this section. 

\begin{theorem} \label{control-thm}
For every $\kappa\in\mathcal A(\mathcal R)$ the map $\rho_\kappa$ of Proposition \ref{propA4} induces an isomorphism of $1$-dimensional $F_\kappa$-vector spaces
\[ \rho_\kappa:\W^{\ord,\pm}_{h_\kappa}\big/P_\kappa\W_{h_\kappa}^{\ord,\pm}\overset\simeq\longrightarrow\W^{\ord,\pm}_\kappa. \]
\end{theorem}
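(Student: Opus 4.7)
My plan is to deduce the theorem from Proposition \ref{propA4} by a dimension count: since $\rho_\kappa$ is already known to be an injective $F_\kappa$-linear map, it suffices to show that both source and target are $1$-dimensional $F_\kappa$-vector spaces.

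\smallskip
\noindent\textbf{Step 1 (Target is $1$-dimensional).} I compute $\dim_{F_\kappa}\W^{\ord,\pm}_\kappa=1$. By Proposition \ref{7.2-hida}, the space $H^1(\Gamma_r,V_{k_\kappa-2}(F_\kappa))^\pm$ is free of rank $1$ over $\mathfrak h_{r,k_\kappa}^D\otimes_{\mathcal O_f}F_\kappa$. Applying the ordinary projector and then cutting out the $f_\kappa$-isotypic piece (which is defined precisely because $f_\kappa$ gives a ring homomorphism $\mathfrak h_{r,k_\kappa}^{D,\ord}\otimes_{\mathcal O_f}F_\kappa\rightarrow F_\kappa$) produces a $1$-dimensional $F_\kappa$-vector space.

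\smallskip
\noindent\textbf{Step 2 (Source is $1$-dimensional).} Set $\p_\kappa:=P_\kappa\cap\Lambda$. The proof of Proposition \ref{prop-2} establishes that $\W^{\ord,\pm}\otimes_\Lambda\Lambda_{\p_\kappa}$ is free of rank $1$ over $\mathfrak h_\infty^{D,\ord}\otimes_\Lambda\Lambda_{\p_\kappa}$. Since $\mathcal K$ appears as a primitive (hence idempotent-cut) component of $\mathfrak h_\infty^{D,\ord}\otimes_\Lambda\mathcal L$, and since the integral closure $\mathcal R$ is unramified over the regular local ring $\Lambda_{\p_\kappa}$ by \cite[Corollary 1.4]{hida-inv}, the canonical morphism $\mathfrak h_\infty^{D,\ord}\otimes_\Lambda\Lambda_{\p_\kappa}\twoheadrightarrow\mathcal R_{P_\kappa}$ exhibits $\mathcal R_{P_\kappa}$ as a direct factor, the projection onto it being realised precisely by cutting out the $h_\kappa$-eigencomponent. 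Tensoring the rank-$1$ freeness along this factor gives that $\W^{\ord,\pm}_{h_\kappa}$ is free of rank $1$ over $\mathcal R_{P_\kappa}$, and consequently $\W^{\ord,\pm}_{h_\kappa}/P_\kappa\W^{\ord,\pm}_{h_\kappa}$ is $1$-dimensional over $F_\kappa=\mathcal R_{P_\kappa}/P_\kappa\mathcal R_{P_\kappa}$.

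\smallskip
\noindent\textbf{Step 3 (Conclusion).} By Proposition \ref{propA4}, $\rho_\kappa$ is injective; by Steps 1 and 2, its source and target are $1$-dimensional $F_\kappa$-vector spaces. An injection between two $1$-dimensional vector spaces is an isomorphism, so we are done.

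\smallskip
The main obstacle is Step 2: one must carefully extract a rank-$1$ statement over $\mathcal R_{P_\kappa}$ from the rank-$1$ freeness over the full localised Hecke algebra $\mathfrak h_\infty^{D,\ord}\otimes_\Lambda\Lambda_{\p_\kappa}$. Technically this requires disentangling the $\p_\kappa$-localisation from the $P_\kappa$-localisation, verifying that $\mathcal K$ really does contribute an idempotent-cut summand of the semi-local ring $\mathfrak h_\infty^{D,\ord}\otimes_\Lambda\Lambda_{\p_\kappa}$, and then using that $\mathcal R_{P_\kappa}$ is a DVR (equivalently, that $\mathcal R$ is unramified over $\Lambda$ at $\p_\kappa$). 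Everything else reduces to flat base change and the elementary fact that an injection of $1$-dimensional vector spaces is a bijection.
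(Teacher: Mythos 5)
Your proposal is correct and follows essentially the same route as the paper: injectivity from Proposition \ref{propA4}, the Matsushima--Shimura rank-one statement (via Proposition \ref{7.2-hida}) for the target, and Proposition \ref{prop-2} for the source, concluding by comparing dimensions. The only cosmetic difference is that the paper phrases the source computation as the lower bound $\dim_{F_\kappa}(\W_{h_\kappa}^\ord/P_\kappa\W_{h_\kappa}^\ord)\geq\dim_{F_\kappa}(\W_\kappa^\ord)$ rather than an exact rank-one freeness statement over $\mathcal R_{P_\kappa}$, but the underlying argument is identical.
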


\begin{proof} Thanks to Proposition \ref{propA4}, we only need to show that
\[ \dim_{F_\kappa}(\W_{h_\kappa}^\ord/P_\kappa\W_{h_\kappa}^\ord)\geq\dim_{F_\kappa}(\W_{\kappa}^\ord). \] 
Since $\dim_{F_\kappa}(\W_{\kappa}^\ord)=2$ as observed in \S \ref{sec2.4}, we are reduced to proving that 
\[ \dim_{F_\kappa}(\W_{h_\kappa}^\ord/P_\kappa\W_{h_\kappa}^\ord)\geq2. \] 
However, Proposition \ref{prop-2} shows that the intersection of $\W_{\mathcal R_\kappa}^\ord$ with the $h_\mathcal R$-eigenmodule of $\W^\ord_\mathcal K$ is a free $\mathcal R_{P_\kappa}$-module of rank $2$, and we are done. \end{proof}

\section{The definite case} \label{def-sec}

In this section $B$ is a \emph{definite} quaternion algebra over $\Q$, whose discriminant $D>1$ is then a square-free product of an \emph{odd} number of primes.

\subsection{Modular forms on quaternion algebras} \label{definite-forms-subsec}

We will often use notations and results from Section \ref{indefinite-case}. For all primes $\ell\nmid D$ fix isomorphisms of $\Q_\ell$-agebras 
\[ i_\ell:B\otimes_\Q\Q_\ell\overset\simeq\longrightarrow\M_2(\Q_\ell) \] 
and a maximal order $R^{\rm max}$ in $B$ such that $i_\ell(R^{\rm max}\otimes_\Z\Z_\ell)=\M_2(\Z_\ell)$. For all primes $\ell\nmid D$ and all integers $r\geq1$ choose Eichler orders $R_0^D(Mp^r)\subset B$ of level $Mp^r$ such that $i_\ell(R_0^D(Mp^r)\otimes\Z_\ell)$ is the order of $\M_2(\Z_\ell)$ consisting of the matrices $\smallmat abcd$ with $c\equiv 0\pmod{Mp^r}$. For every integer $r\geq0$ define the compact open subgroup $U_r$ of $\hat B^\times$ as 
\[ U_r:=\Big\{{(x_\ell)}_\ell\in\hat R_0^D(Mp^r)^\times\;\big|\;\text{$i_\ell(x_\ell)=\smallmat abcd$ with $a\equiv1\pmod{\ell^{{\rm ord}_\ell(Mp^r)}}$ for all $\ell|Mp^r$}\Big\}. \]
We begin by recalling the definition of modular forms on $B$ which can be found, e.g., in \cite[Definition 2.1]{BD}; references are \cite[Section 2]{BD}, \cite[Section 4]{Bu} and \cite[Section 2]{hida}. If $A$ is a $\Z_p$-module equipped with a left linear action of $\M_2(\Z_p)\cap\GL_2(\Q_p)$ and $U$ is a compact open subgroup of $\hat B^\times$ then an \emph{$A$-valued modular form on $B$ of level $U$} is an element of the $A$-module $S(U,A)$ of functions 
\[ s:\hat B^\times\longrightarrow A \]
such that 
\[ g(bgu)=i_p(u_p)^{-1}s(g)\qquad\text{for all $b\in B^\times$, $g\in\hat B^\times$ and $u\in U$}, \] 
where $u_p$ denotes the $p$-component of $u$. Therefore a modular form in $S(U,A)$ is completely determined by its values on the finite set 
\[ X(U):=B^\times\backslash\widehat B^\times/U. \]  
Finally, for $U=U_r$ set $X_r:=X(U_r)$ and $S_k(U_r,A):=S(U_r,V_{k-2}(A))$.  

\begin{remark}
As in \cite{BD}, the above definition works for $\Z_p$-modules endowed with a \emph{left} linear action of $\GL_2(\Q_p)\cap\M_2(\Z_p)$. The definition which can be found in \cite{Bu} uses, on the contrary, \emph{right} actions on $A$. Of course, the two definitions are compatible, as one sees by turning the right action in \cite{Bu} into a left one via the formula $\gamma\cdot a:=a|\gamma^*$.  
\end{remark} 

\begin{remark} 
The definition in \cite{hida} looks different from the ones in in \cite{Bu} and \cite{BD}. The point is that in \cite{hida} the weight action on polynomials is concentrated in the archimedean place, while the above definition makes use of the place at $p$. However, the two notions are equivalent whenever we fix an embedding $\bar\Q_p\hookrightarrow\C$. For details, see \cite[Section 4]{Bu} and the references quoted there. 
\end{remark}

\subsection{Hecke algebras} \label{App1-def} 

We review the theory of \S \ref{App1} in the adelic language, which is more suitable for applications to definite quaternion algebras (and for generalizations of the theory to the case of totally real fields).

For any integer $r\geq0$ define 
\[ \Sigma_r:=\Big\{{(x_\ell)}_\ell\in\hat R_0^D(Mp^r)\cap\hat B^\times\;\big|\;\text{$i_\ell(x_\ell)=\smallmat abcd$ with $a\equiv1\pmod{\ell^{{\rm ord}_\ell(Mp^r)}}$ for all $\ell|Mp^r$}\Big\} \]
and
\[ \Delta_r:=\bigl\{{(x_\ell)}_\ell\in\hat R^{\rm max}\cap\hat B^\times\mid\text{$i_\ell(x_p)=\smallmat abcd$ with $a\equiv1\pmod{p^r}$}\bigr\}. \]
Now fix an integer $r\geq0$. For every integer $n\geq2$ there is a Hecke operator $T_n=\sum_iT(\alpha_i)$ in $\mathcal H(U_r,\Sigma_r)$ and $\mathcal H(U_r,\Delta_r)$, where the sum is taken over all double cosets $U_r\alpha_iU_r$ with $\alpha_i\in\Sigma_r$ and ${\rm norm}(\alpha_i)\hat\Z=n\hat\Z$ (here ${\rm norm}:\hat B\rightarrow\hat\Q$ is the adelization of the norm map). There are also Hecke operators $T_{n,n}$ in $\mathcal H(U_r,\Sigma_r)$ and $\mathcal H(U_r,\Delta_r)$ for integers $n\geq1$ prime to $MDp^r$, defined as follows. For any such $n$ choose $\nu\in\hat\Z\cap\hat\Q^\times$ such that $n\hat\Z=\nu\hat\Z$ and $\nu-1\in Mp^r\hat\Z$, then define $T_{n,n}=U_r\nu U_r$. Finally, recall that $\mathcal H(U_r,\Sigma_r)$ is the commutative algebra generated over $\Z$ by the operators $T_n$ and $T_{n,n}$. 

Fix a finite field extension $F$ of $\Q_p$ and denote $\mathcal O$ its ring of integers. An operator $T\in\mathcal H(U_r,\Delta_r)$ acts on $S_2(U_r,F)$ as follows. Write $T=\coprod_i U\alpha_i$ and define 
\[ (s|T)(g):=\sum_is(g\alpha_i). \] 
Let $\mathfrak h^D_r$ denote the image of $\mathcal H(U_r,\Sigma_r)\otimes_\Z\mathcal O$ in the endomorphism algebra of $S_2(U_r,F)$. As in \S \ref{App1}, for $r\geq1$ let $\mathfrak h_r^{D,\ord}$ denote the product of those local rings of $\mathfrak h^D_r$ where $U_p$ is invertible and define $\mathfrak h_\infty^D$ and $\mathfrak h_\infty^{D,\ord}$ to be the inverse limits over $r\geq1$ of the rings $\mathfrak h^D_r$ and $\mathfrak h_r^{D,\ord}$, respectively. 

For any $a\in\Z'$ choose $\alpha\in\hat\Z\cap\hat\Q^\times$ such that $a\hat\Z=\alpha\hat\Z$ and $\alpha-1\in p^r\hat\Z$, then define $\langle a\rangle=U_r\alpha U_r$ in $\mathcal H(U_r,\Delta_r)$. The map $a\mapsto\langle a\rangle$ is multiplicative and thus extends to a ring homomorphism $\Z[\Z']\rightarrow\mathcal H(\Gamma_r,\Delta_r)$. Since $\Z[\Z']$ embeds naturally in $\tilde\Lambda$, we may form the $\tilde\Lambda$-algebras 
\[ \mathcal H(p^r):=\mathcal H(U_r,\Delta_r)\otimes_{\Z[\Z']}\tilde\Lambda. \]
Now the Hecke pair $(U_r,\Delta_r)$ is weakly compatible (in the sense of \cite[Definition 2.1]{AS}) to $(U_0,\Delta_0)$, hence, as explained in \cite[\S 2]{AS}, there is a canonical surjective $\tilde\Lambda$-algebra homomorphism 
\[ \rho_r:\mathcal H(1)\;\longepi\;\mathcal H(p^r) \] 
for every integer $r\geq1$. 

We define the commutative $\tilde\Lambda$-algebra 
\[ \mathfrak  h^D_{\rm univ}:=\tilde\Lambda\bigl[\,\text{$T_n$ for every $n\geq1$ and $T_{n,n}$ for every $n\geq1$ with $(n,MD)=1$}\,\bigr]\subset\mathcal H(1). \]
The $\tilde\Lambda$-algebra $\mathfrak h^D_{\rm univ}$ acts compatibly on the $\C$-vector spaces $S_2(U_r)$, in the sense that the diagram of $\tilde\Lambda$-algebras
\[ \xymatrix{\mathfrak h^D_{\rm univ}\ar[r]^-{\rho_r}\ar[rd]_-{\rho_{r-1}}&\mathcal H(p^r)\ar@{->>}[d]\\
             &\mathcal H(p^{r-1})} \] 
commutes for all $r\geq1$ (here the vertical arrow is the canonical map arising from the weakly compatibility of the Hecke pairs $(U_r,\Delta_r)$ and $(U_{r-1},\Delta_{r-1})$). For all $r\geq1$ the image of $\mathfrak h^D_{\rm univ}$ in the endomorphism algebra of $S_2(U_r,\C)$ is canonically isomorphic to $\mathfrak h^D_r$, hence we obtain a canonical morphism
\[ h:\mathfrak h^D_{\rm univ}\longrightarrow\mathfrak h_\infty^{D,\ord}. \]

\subsection{Hida families} 

The Jacquet--Langlands correspondence (which, in this case, can be concretely established via the Eichler trace formula) ensures that $\mathfrak h^D_r$ is canonically isomorphic to the quotient of $\mathfrak h_r^1$ acting faithfully on the $\C$-vector space $S_2^\text{$D$-new}(\Gamma_1(MDp^r))$ of cusp forms of weight $2$ and level $\Gamma_1(MDp^r)$ which are new at all the primes dividing $D$. We also fix a (non-canonical) isomorphism $S_2^\text{$D$-new}(\Gamma_1(MDp^r))\simeq S_2(U_r,\C)$. Therefore for all $r\geq1$ there is a canonical projection $\mathfrak h^1_r\rightarrow\mathfrak h^D_r$, which restricts to the ordinary parts, yielding a canonical map $\mathfrak h_\infty^{1,\ord}\rightarrow\mathfrak h_\infty^{D,\ord}$. As above, there is a splitting 
\[ \mathfrak h_\infty^{D,\ord}\otimes_\Lambda\mathcal L\simeq\bigg(\bigoplus_{j\in J}\mathcal F_j\bigg)\oplus\mathcal M \]
where the $\mathcal F_j$ are finite field extensions of $\mathcal L$ and $\mathcal M$ is non-reduced. Since the morphism associated with $f$ factors through $\mathfrak h_\infty^{D,\ord}$, it must factor through some $\mathcal F\in\{\mathcal F_j\}_{j\in J}$ which is canonically isomorphic to $\mathcal K$ (the primitive component through which the morphism associated with $f$ factors). Summing up, we get a commutative diagram
\[ \xymatrix{\mathfrak h_\infty^{1,\ord}\ar[rr]^-{f_\infty}\ar[dr]&&\mathcal R\\
             &\mathfrak h_\infty^{D,\ord}\ar[ur]_-{f_\infty}} \]  
where we write $f_\infty$ also for the factoring map and the unlabeled arrow is the canonical projection considered before. 

\subsection{Hecke action on modular forms} 

For a field $F$, the Hecke action on $S_2(U_r,F)$ has been described above. Now we consider the Hecke action on $S_k(U_r,A)$ for general weights $k$ and $\Q_p$-vector spaces $A$ with a left linear $\GL_2(\Q_p)\cap\M_2(\Z_p)$-action.  

Suppose that $\eta\in\Sigma_r$ and $s\in S_k(U_r,A)$, then set 
\[ (\eta\cdot s)(g):=i_p(\eta_p)s(g\eta). \] 
Observe that
\[ S_k(U_r,F)=\bigl\{s:B^\times\backslash\hat B^\times\rightarrow V_{k-2}(F)\mid\text{$\eta\cdot s=s$ for all $\eta\in U_r$}\bigr\}. \]
Now, if $T=U_r\alpha U_r=\coprod_i \alpha_iU_r$ we define $T\cdot s:=\sum_i\alpha_i\cdot s$. 

\begin{proposition} \label{prop3.3}
For any subfield $F$ of $\C$ containing $F_f$ via the fixed embedding $\bar\Q_p\hookrightarrow\C$ the $\mathfrak h_k^D\otimes_{\mathcal O_f}F$-module $S_k(U_r,F)$ is free of rank $1$ if $k>2$. If $k=2$ then the same is true for the quotient of $S_2(U_r,F)$ by the subspace $S_2^{\rm triv}(U_r,F)$ consisting of those functions factoring through the norm map. 
\end{proposition}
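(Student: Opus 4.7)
The plan is to model the argument on the proof of Proposition \ref{7.2-hida}, with the simplification that there is no involution $\iota$ to split off in the definite case, so one expects a single copy of the Hecke algebra rather than two eigen-copies. The main inputs will be the Jacquet--Langlands correspondence, the duality between the Hecke algebra and classical cusp forms, and faithful flatness of $\C$ over $F$.

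First, I would invoke the Jacquet--Langlands correspondence as recalled at the start of \S \ref{App1-def} to produce a canonical Hecke-equivariant isomorphism
\[ S_k(U_r,\C)\big/S_k^{\rm triv}(U_r,\C)\;\simeq\;S_k^\text{$D$-new}\bigl(\Gamma_1(MDp^r),\C\bigr). \]
For $k>2$ the left hand side coincides with $S_k(U_r,\C)$, since a function factoring through the reduced norm cannot satisfy the transformation rule $s(bgu)=i_p(u_p)^{-1}s(g)$ with the non-trivial weight action on $V_{k-2}(\C)$ unless it is identically zero. This reduces matters to the classical space of $D$-new cusp forms of level $\Gamma_1(MDp^r)$.

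Second, by \cite[Proposition 3.1]{hida-iwasawa} (already invoked in the indefinite case) there is a canonical Hecke-equivariant duality
\[ \Hom_\C\bigl(\mathfrak h_{r,k}^1\otimes_{\mathcal O_f}\C,\C\bigr)\;\simeq\;S_k\bigl(\Gamma_1(MDp^r),\C\bigr), \]
and a morphism on the left factors through $\mathfrak h_{r,k}^D$ exactly when the corresponding modular form is new at every prime dividing $D$. Combining this with the first step yields a (non-canonical) isomorphism
\[ \mathfrak h_{r,k}^D\otimes_{\mathcal O_f}\C\;\simeq\;S_k(U_r,\C)\big/S_k^{\rm triv}(U_r,\C) \]
of $\mathfrak h_{r,k}^D\otimes_{\mathcal O_f}\C$-modules, which establishes the freeness of rank one statement over $\C$. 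To descend to $F$, I would use that $S_k(U_r,F)$ consists of functions on the finite set $X(U_r)$ with values in $V_{k-2}(F)$, hence behaves well under scalar extension: $S_k(U_r,F)\otimes_F\C\simeq S_k(U_r,\C)$ and similarly for the trivial subspace. Coupled with $\mathfrak h_{r,k}^D\otimes_{\mathcal O_f}\C\simeq(\mathfrak h_{r,k}^D\otimes_{\mathcal O_f}F)\otimes_F\C$ and the fact that $\C$ is faithfully flat over $F$, the result follows exactly as in the last paragraph of the proof of Proposition \ref{7.2-hida}.

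The main obstacle I anticipate is the careful treatment of the case $k=2$. One has to verify that $S_2^{\rm triv}(U_r,F)$ is stable under the action of $\mathfrak h_{r,2}^D$, so that the quotient inherits a well-defined Hecke-module structure; this amounts to checking that if $s(g)=\phi(\mathrm{norm}(g))$ and $T=\coprod_i\alpha_iU_r$, then $(s|T)(g)=\sum_i\phi(\mathrm{norm}(g)\mathrm{norm}(\alpha_i))$ again factors through the norm. One must then identify, under Jacquet--Langlands, the trivial subspace with the complement of the cuspidal part and confirm that the identification is compatible with the extraction of the $D$-new component. Once this bookkeeping is in place, the rest is formal.
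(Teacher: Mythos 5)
Your overall route is the same as the paper's, but there is a genuine gap at the decisive step. The Jacquet--Langlands identification together with \cite[Proposition 3.1]{hida-iwasawa} gives you an isomorphism of $\mathfrak h_{r,k}^D\otimes_{\mathcal O_f}\C$-modules
\[ \Hom_\C\bigl(\mathfrak h_{r,k}^D\otimes_{\mathcal O_f}\C,\C\bigr)\;\simeq\;\tilde S_k(U_r,\C), \]
i.e.\ it identifies the space of forms with the $\C$-linear \emph{dual} of the Hecke algebra, not with the Hecke algebra itself. You then write that ``combining'' these two inputs yields $\mathfrak h_{r,k}^D\otimes_{\mathcal O_f}\C\simeq\tilde S_k(U_r,\C)$, which is exactly the freeness assertion to be proved; but passing from the dual to the algebra requires knowing that $A:=\mathfrak h_{r,k}^D\otimes_{\mathcal O_f}\C$ is self-dual as a module over itself, i.e.\ that $A$ is a Frobenius (equivalently, Gorenstein) algebra. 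For a general finite-dimensional commutative $\C$-algebra this fails: for $A=\C[x,y]/(x^2,xy,y^2)$ the dual $\Hom_\C(A,\C)$ is faithful of the right dimension but is not free. So without this input your argument only shows that $\tilde S_k(U_r,\C)$ is a faithful Hecke module of dimension $\dim_\C A$, which is strictly weaker than freeness of rank one.

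The paper closes precisely this gap by citing \cite[Corollary 6.5]{hida-inv}, which asserts that $\mathfrak h_r^1$ is a Frobenius algebra over $\C$, so that $\mathfrak h_r^1\otimes_{\mathcal O_f}\C\simeq\Hom_\C(\mathfrak h_r^1\otimes_{\mathcal O_f}\C,\C)$ as modules, and then observing that $\mathfrak h_r^D\otimes_{\mathcal O_f}\C$ is a direct factor of $\mathfrak h_r^1\otimes_{\mathcal O_f}\C$, hence inherits the self-duality. You should insert this step between your duality statement and your conclusion over $\C$. The remaining parts of your proposal --- the vanishing of $S_k^{\rm triv}(U_r,\C)$ for $k>2$, the Hecke-stability of the trivial subspace when $k=2$, and the descent from $\C$ to $F$ by faithful flatness using that $S_k(U_r,F)$ is a space of $V_{k-2}(F)$-valued functions on the finite set $X(U_r)$ --- all match the paper's proof and are fine.
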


\begin{proof} In order to have uniform notations, for any field $F$ define $\tilde S_k(U_r,F):=S_k(U_r,F)$ if $k>2$ and $\tilde S_2(U_r,F):=S_2(U_r,F)/S_2^{\rm triv}(U_r,F)$. As before, fix a (non-canonical) isomorphism 
\begin{equation} \label{JL-def}
S_k^\text{$D$-new}(\Gamma_1(MDp^r))\simeq\tilde S_k(U_r,\C)
\end{equation}
(\emph{cf.} \cite[Theorem 2]{Bu}). Thanks to \cite[Proposition 3.1]{hida-iwasawa}, there is a canonical isomorphism 
\[ \Hom_\C\bigl(\mathfrak h_k^1\otimes_{\mathcal O_f}\C,\C\bigr)\simeq S_k\bigl(\Gamma_1(MDp^r)\bigr). \] 
Now a homomorphism in the left hand side factors through $\mathfrak h^D_k$ precisely when the corresponding cusp form is new at all the primes dividing $D$, and so, combining this fact with \eqref{JL-def}, we obtain a non-canonical isomorphism 
\[ \Hom_\C\bigl(\mathfrak h_k^D\otimes_{\mathcal O_f}\C,\C\bigr)\simeq\tilde S_k(U_r,\C). \] 
Thanks to \cite[Corollary 6.5]{hida-inv}, $\mathfrak h_r^{1}$ is a Frobenius algebra over $\C$ (actually, over any field $K\subset\C$), hence there is a canonical isomorphism of $\mathfrak h^1_r\otimes_{\mathcal O_f}\C$-modules 
\[ \mathfrak h_r^1\otimes_{\mathcal O_f}\C\simeq\Hom_\C\bigl(\mathfrak h_r^1\otimes_{\mathcal O_f}\C,\C\bigr). \] 
Since $\mathfrak h_r^D\otimes_{\mathcal O_f}\C$ is a direct factor of $\mathfrak h^1_r\otimes_{\mathcal O_f}\C$, one deduces that 
\[ \Hom_\C\bigl(\mathfrak h_r^D\otimes_{\mathcal O_f}\C,\C\bigr)\simeq\mathfrak h^D_r\otimes_{\mathcal O_f}\C \] 
as $\mathfrak h^D_r\otimes\C$-modules. Therefore we get isomorphisms
\[ \bigl(\mathfrak h_r^D\otimes_{\mathcal O_f}F\bigr)\otimes_F\C\simeq\mathfrak h_k^D\otimes_{\mathcal O_f}\C\simeq\tilde S_k(U_r,\C)\simeq\tilde S_k(U_r,F)\otimes_F\C, \] 
and the result follows because $\C$ is fully faithful over $F$. \end{proof}

\subsection{Measure-valued modular forms} \label{W-def}

With notation as in \S \ref{sec3.5}, in our present context define 
\[ \W:=S_2(U_0,\D). \] 
Then $\W$ has a natural action of $\mathfrak h^D_{\rm univ}$ and, since 
\[ \W\simeq\varprojlim_rS_2(U_r,\mathcal O_f), \] 
we can define its ordinary part $\W^\ord$ as in \S \ref{sec3.5}. Moreover, for any $\Lambda$-algebra $R$ put 
\[ \W_R:=\W\otimes_\Lambda R,\qquad\W^\ord_R:=\W^\ord\otimes_\Lambda R. \]
The operator $T_p\in\mathfrak h^D_{\rm univ}$ gives rise to a coset decomposition
\begin{equation} \label{Tp-def}
T_p=\coprod_{a\in\{0,\dots,p-1,\infty\}}U_0g_a
\end{equation}
where $i_\ell(g_\infty)=i_\ell(g_i)=1$ for all $\ell\not=p$ while $i_p(g_\infty)=\smallmat p001$, $i_p(g_i)=\smallmat 1{a_i}0p$ and the $a_i$ are integers forming a complete system of representatives of $\Z/p\Z$.

\subsection{Specialization maps}

As in \S \ref{specialization-subsec}, for an even integer $n\geq0$, a character $\epsilon:\Z_p^\times\rightarrow\bar\Q_p^\times$ factoring through $(\Z/p^r\Z)^\times$ for some integer $r\geq1$ and a finite field extension $L/\Q_p$ containing the values of $\epsilon$ there is a \emph{specialization map}  
\[ \rho_{n,\epsilon}=\rho_{n,\epsilon,L}:\W\longrightarrow S_k(U_r,L) \]
defined by 
\[ \rho_{n,\epsilon}(s)(g)(P):=\int_{\Z_p^\times\times p\Z_p}\epsilon(a)P(x,y)ds(g) \]
for all $g\in\hat B^\times$ and $P\in P_n(L)$. 

Let $\gamma\in B^\times$ and write $i_p(\gamma)=\smallmat abcd$. Suppose that $i_p(\gamma)\in\GL_2(\Q_p)\cap\M_2(\Z_p)$ with $a\in\Z_p^\times$ and $c\equiv0\pmod{p^r}$. Then for $s\in\W$, $g\in\hat B^\times$ and $P\in P_n(L)$ there are equalities 
\[ \begin{split}
   \rho_{n,\epsilon}(\gamma\cdot s)(g)(P)&=\int_\Y\chi_{\Z_p^\times\times p\Z_p}(x,y)\epsilon(a)P(x,y)d(\gamma\cdot s)(g)\\
   &=\int_\Y\chi_{\Z_p^\times\times p\Z_p}(x,y)\epsilon(y)P(x,y)di_p(\gamma)s(g\gamma)\\
   &=\int_\Y\chi_{\Z_p^\times\times p\Z_p}(ax+by,cx+dy)\epsilon(ax+by)P(ax+by,cx+dy)ds(g\gamma)\\
   &=\int_{\Z_p^\times\times p\Z_p}\epsilon(a)\epsilon(x)P(ax+by,cx+dy)ds(g\gamma)\\
   &=\epsilon(a)\rho_{n,\epsilon}(s)(g\gamma)(P|\gamma)=\epsilon(a)\bigl(\gamma\cdot\rho_{n,\epsilon}(s)\bigr)(P).
   \end{split} \] 
The above computation shows that
\[ \rho_{n,\epsilon}(\gamma\cdot s)=\gamma\cdot\rho_{n,\epsilon}(s) \]
for all $\gamma\in\Delta_r$. Note also that $\rho_{n,\epsilon}(T_p s)=U_p\rho_{n,\epsilon}(s)$ because $\smallmat p001(\Z_p^\times\times p\Z_p)\cap\X=\emptyset$. It follows that the map $\rho_{n,\epsilon}$ is $\mathfrak h^D_{\rm univ}$-equivariant.
 
Restricting to the ordinary parts, we also get an $\mathfrak h^D_{\rm univ}$-equivariant map
\[ \rho_{n,\epsilon}^\ord:\W^\ord\longrightarrow S_k(U_r,L)^\ord. \]

\subsection{The Control Theorem} 

For any integer $m\geq1$ and any character $\chi:\Z_p^\times\rightarrow\bar\Q_p^\times$ define the function $\psi_{m,\chi}:\X\rightarrow\Z_p$ by 
\[ \psi_{m,\chi}((x,y)):=\begin{cases}\chi(x)&\text{if $y\in p^m\Z_p$},\\[2mm]0&\text{otherwise}. \end{cases} \]
Note that $\psi_{m,\chi}$ is homogeneous of degree $\chi$ for all integers $m\geq1$. 

The next result is the counterpart of Lemma \ref{prop-kernel}.

\begin{lemma} \label{prop-kernel-def} 
Fix $\kappa\in\mathcal A(\tilde\Lambda)$ of weight $k$ and character $\epsilon$ factoring through $(\Z/p^r\Z)^\times$ for some integer $r\geq1$. Set $n:=k-2$. The map $\rho_{n,\epsilon}^\ord$ induces an injective, $\mathfrak h^D_{\rm univ}$-equivariant homomorphism of $\tilde\Lambda/P_\kappa\tilde\Lambda$-modules
\[ \rho_{n,\epsilon}^\ord:\W^\ord/P_\kappa\W^\ord\;\longmono\;S_k(U_r,F_f)^\ord. \] 
\end{lemma}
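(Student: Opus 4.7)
The plan is to follow, mutatis mutandis, the proof of Lemma \ref{prop-kernel} from the indefinite case. The essential simplification is that here $\W = S_2(U_0,\D)$ consists of honest functions on the finite set $X(U_0)$ with values in $\D$ rather than group cohomology classes, so there are no cocycles or coboundaries to manage and the chain-level arguments of Lemmas \ref{lemma A1}--\ref{as-lemma} become unnecessary: the vanishing of $\rho_{n,\epsilon}^\ord(s)$ already means the pointwise vanishing $\int_{\Z_p^\times\times p\Z_p}\epsilon(x)P(x,y)\,ds(g)=0$ for every $g\in\hat B^\times$ and every $P\in P_n$.

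First I would establish the inclusion $P_\kappa\W^\ord\subseteq\ker(\rho_{n,\epsilon}^\ord)$. The measure-level analog of Lemma \ref{lemma-integration-II} carries over verbatim, since its proof uses only the distributional inputs \cite[Lemma 1.2]{as} and \cite[Lemma (6.3)]{AS}, with no reference to cohomology; it characterizes $P_\kappa\D$ as the submodule of $\nu\in\D$ that annihilate every continuous function $\X\to\Z_p$ homogeneous of degree $\chi$, where $\chi(t)=\epsilon(t)t^n$. Since the integrand $\chi_{\Z_p^\times\times p\Z_p}(x,y)\,\epsilon(x)P(x,y)$ defining $\rho_{n,\epsilon}$ is manifestly homogeneous of degree $\chi$, an element $s\in P_\kappa\W$ forces $s(g)\in P_\kappa\D$ for every $g$, so $\rho_{n,\epsilon}^\ord(s)=0$.

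For the reverse inclusion, let $s\in\W^\ord$ with $\rho_{n,\epsilon}^\ord(s)=0$. Fix $m\geq1$ and $g\in X(U_0)$; by the characterization just recalled, it suffices to show $\int_\X\psi_{m,\kappa}\,ds(g)=0$. Using invertibility of $T_p$ on $\W^\ord$, I would write $s=t|T_p^m$ with $t=s|T_p^{-m}\in\W^\ord$. The $\mathfrak h^D_{\rm univ}$-equivariance of $\rho_{n,\epsilon}^\ord$, together with the identity $\rho_{n,\epsilon}^\ord(s|T_p)=\rho_{n,\epsilon}^\ord(s)|U_p$ (valid because $\smallmat p001(\Z_p^\times\times p\Z_p)\cap\X=\emptyset$, as already recorded in the discussion preceding the present statement) and the invertibility of $U_p$ on the ordinary part, give $\rho_{n,\epsilon}^\ord(t)=0$. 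I would then expand $\int_\X\psi_{m,\kappa}\,d(t|T_p^m)(g)$ along the coset decomposition of $T_p^m$ built out of \eqref{Tp-def}, and carry out a support analysis analogous to the one in the proof of Lemma \ref{prop-kernel} -- with the roles of $x$ and $y$ interchanged, in keeping with the definite-case definitions of $\rho_{n,\epsilon}$ and $\psi_{m,\kappa}$ -- to isolate the unique iterated coset product whose contribution survives. That contribution reduces to an integral of the form $\int\epsilon(x)x^n\,dt(g')$ annihilated by $\rho_{n,\epsilon}^\ord(t)=0$, once the integration domain is collapsed using that $t(g')$ is supported on $\X$ and that $\epsilon$ vanishes on $p\Z_p$; this yields $s\in P_\kappa\W^\ord$. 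The hard part will be exactly this combinatorial support analysis: pinning down which iterated coset product contributes non-trivially and checking that the resulting integral matches the shape annihilated by $\rho_{n,\epsilon}^\ord(t)=0$. Once this is settled, the $\mathfrak h^D_{\rm univ}$-equivariance and the injectivity of the induced map on the quotient follow formally from the construction, in direct parallel with the indefinite case.
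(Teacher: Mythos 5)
Your proposal is correct and follows essentially the same route as the paper: the forward inclusion via homogeneity of the integrand and the characterization of $P_\kappa\D$ (the paper cites \cite[Lemma 6.3]{GS} directly rather than restating Lemma \ref{lemma-integration-II}), and the reverse inclusion by writing $s=T_p^m\cdot t$, using invertibility of $U_p$ on the ordinary part to get $\rho_{n,\epsilon}^\ord(t)=0$, and expanding $\int_\X\psi_{m,\kappa}\,ds(g)$ along the coset decomposition of $T_p^m$. One correction to the step you flag as the hard part: in the definite case it is \emph{not} a unique iterated coset product that survives the support analysis; rather, all $p^m$ products built solely from the $g_i$ with $i\neq\infty$ contribute (only factors of $g_\infty=\smallmat p001$ kill the integrand, since $\kappa(px)=0$), and using the decomposition $\Z_p^\times\times p\Z_p=\coprod_{i=0}^{p-1}g_i(\Z_p^\times\times p\Z_p)$ their total contribution assembles into $U_p^m\rho_{n,\epsilon}^\ord(t)(g)(x^n)$, which vanishes because $\rho_{n,\epsilon}^\ord(t)=0$ -- so your argument still closes, just with this different combinatorial shape.
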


\begin{proof} First suppose that $s\in P_\kappa\W^\ord$, so that $s(g)\in P_\kappa\D$. Since the integrand $\epsilon(y)P(x,y)$ appearing in the expression of $\rho_{n,\epsilon}^\ord$ is homogeneous of degree $\kappa$, \cite[Lemma 6.3]{GS} shows that $\rho_{n,\epsilon}(s)(g)(P)=0$ and thus $s\in\ker(\rho_{n,\epsilon}^\ord)$. 

Now we show the opposite inclusion $\ker(\rho_{n,\epsilon}^\ord)\subset P_\kappa\W^\ord$. Fix $s\in \ker(\rho_{n,\epsilon}^\ord)$ and an integer $m\geq1$, then choose $t\in\W^\ord$ such that $T_p^m\cdot t=s$ (this is possible because $T_p$ induces an isomorphism on $\W^\ord$). Write $T_p^m$ as 
\[ T_p^m=\coprod U_0g_{m,i} \] 
where the $g_{m,i}$ are suitable products of $m$ elements, not necessarily distinct, chosen in the set ${\{g_a\}}_{a=0,\dots,p-1,\infty}$ defined in \eqref{Tp-def}.
Therefore we have 
\[ \int_\X \psi_{m,\kappa}(x,y)ds(g)(x,y)=\sum_i\int_\X\psi_{m,\kappa}\big(g_{m,i}(x,y)\big)dt(gg_{m,i})(x,y). \] 
Now $\psi_{m,\kappa}(g_{m,i}(x,y))=0$ unless $g_{m,i}$ is the product of elements of the form $g_i$ with $i\neq\infty$. Furthermore, there is a decomposition 
\[ \Z_p^\times\times p\Z_p=\coprod_{i=0}^{p-1}g_{i}(\Z_p^\times\times p\Z_p), \]
so the above sum is equal to
\[ \sum_{m,i}\int_{g_{m,i}(\Z_p^\times\times p\Z_p)}\kappa(x)dt(gg_{m,i})=\sum_{m,i}\int_{g_{m,i}(\Z_p^\times\times p\Z_p)}\epsilon(x)x^ndt(gg_{m,i})=U_p^m\rho_{n,\epsilon}^\ord(t)(g)(x^n). \] 
Now 
\begin{equation} \label{Up-def-eq}
0=\rho_{n,\epsilon}^\ord(s)=\rho_{n,\epsilon}^\ord(T_p^mt)=U_p^m\rho_{n,\epsilon}^\ord(t). 
\end{equation} 
Since $s\in\W^\ord$, the same is true of $t$ and also of $\rho_{n,\epsilon}^\ord(t)$. Equation \eqref{Up-def-eq} then shows that $t=0$ because $U_p$ acts invertibly on the ordinary submodule, hence we conclude that  
\[ \int_\X \psi_{m,\kappa}(x,y)ds(g)=0 \] 
for all $g\in\hat B^\times$. Finally, from \cite[Lemma 6.3]{GS} it follows that $s(g)\in P_\kappa\D$ for all $g\in\hat B^\times$, and the lemma is proved. \end{proof}

Let $\kappa\in\mathcal A(\mathcal R)$ and define the $F_\kappa$-vector space
\[ \W_\kappa^\ord:=\Big(S_k(U_r,F_\kappa)^\ord\Big)^{f_\kappa}. \] 
Since $f_\kappa$ is either a newform or a $p$-stabilized newform, $\W_\kappa^\ord$ is $1$-dimensional over $F_\kappa$. 

Define $h_\kappa:\mathfrak h^D_{\rm univ}\rightarrow\mathcal R_{P_\kappa}$ to be the composition of $h$ with the localization map $\mathcal R\rightarrow\mathcal R_{P_\kappa}$ at the kernel $P_\kappa$ of $\kappa$. We can consider the $\mathcal R_{P_\kappa}$-submodule $\W_{h_\kappa}^\ord$ of $\W^\ord_{\mathcal R_{P_\kappa}}$ defined by 
\[ \W_{h_\kappa}^\ord:=\bigl(\W^\ord_{\mathcal R_{P_\kappa}}\bigr)^{h_\kappa}. \]

\begin{proposition} \label{propA4-def}
Let $\kappa\in\mathcal A(\mathcal R)$ be of weight $k=k_\kappa$ and character $\epsilon=\epsilon_\kappa$. Set $n:=k-2$. The map $\rho_{n,\epsilon}^\ord$ of Lemma \ref{prop-kernel-def} induces an injective homomorphism of $F_\kappa$-vector spaces
\[ \rho_\kappa:\W_{h_\kappa}^\ord/P_\kappa\W_{h_\kappa}^\ord\;\longmono\;\W_\kappa^\ord. \]
\end{proposition}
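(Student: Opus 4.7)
The plan is to mimic the proof of Proposition \ref{propA4} (the indefinite counterpart) almost verbatim, since the only real input particular to the definite case, namely Lemma \ref{prop-kernel-def}, is already in hand and the rest of the argument is formal commutative algebra combined with a universal coefficient statement for the coefficient space $S_k(U_r,\cdot)$.

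First, I would reduce from $\tilde\Lambda$ to $\Lambda$. Set $\p_\kappa:=P_\kappa\cap\Lambda$; via the canonical projection $\tilde\Lambda\twoheadrightarrow\Lambda$ one may regard $\p_\kappa$ as an arithmetic prime in $\mathcal A(\tilde\Lambda)$ of weight $k$ and character $\epsilon$. Lemma \ref{prop-kernel-def} then gives an $\mathfrak h^D_{\rm univ}$-equivariant injection
\[ \rho_{n,\epsilon}^\ord:\W^\ord/\p_\kappa\W^\ord\;\longmono\;S_k(U_r,F_f)^\ord. \]
Because the quotient $\tilde\Lambda/\p_\kappa\tilde\Lambda$ coincides with $\Lambda/\p_\kappa\Lambda$, this is already a map of $\Lambda/\p_\kappa\Lambda$-modules, and I may work with $\W^\ord$ as a $\Lambda$-module from here on.

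Next I would propagate the injection through two flat base-changes. Flatness of the localization $\Lambda_{\p_\kappa}\to\Lambda$ yields an injective homomorphism
\[ \rho_{n,\epsilon}^\ord:\W^\ord_{\Lambda_{\p_\kappa}}\big/\p_\kappa\W^\ord_{\Lambda_{\p_\kappa}}\;\longmono\;S_k(U_r,F_f)^\ord, \]
and since by \cite[Corollary 1.4]{hida-inv} the extension $\mathcal R_{P_\kappa}/\Lambda_{\p_\kappa}$ is unramified, hence flat, a further base-change produces an injective homomorphism of $F_\kappa$-vector spaces
\[ \rho_\kappa^\ord:\W^\ord_{\mathcal R_{P_\kappa}}\big/P_\kappa\W^\ord_{\mathcal R_{P_\kappa}}\;\longmono\;S_k(U_r,F_f)^\ord\otimes_{F_f}F_\kappa. \]
A straightforward application of the universal coefficient theorem for the coefficient module $V_{k-2}$ identifies the target with $S_k(U_r,F_\kappa)^\ord$, in an $\mathfrak h^D_{\rm univ}$-equivariant manner; this step replaces the cohomological universal coefficient used in the indefinite case, and here it is essentially the observation that $S_k(U_r,-)$ is the $U_r$-invariants in a finite set of copies of $V_{k-2}(-)$, so it commutes with flat extension of scalars.

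Finally, I would take the $h_\kappa$-eigenspaces on both sides. Because $\rho_\kappa^\ord$ is $\mathfrak h^D_{\rm univ}$-equivariant and $\mathcal R_{P_\kappa}$ is a local ring with residue field $F_\kappa$, restricting the injection to the $h_\kappa$-part on the source and to the $f_\kappa$-part on the target produces the desired injection
\[ \rho_\kappa:\W_{h_\kappa}^\ord\big/P_\kappa\W_{h_\kappa}^\ord\;\longmono\;\W_\kappa^\ord. \]
The only point I would flag as requiring care, rather than being an obstacle per se, is the verification that the two flat base-changes and the passage to eigenspaces all commute with taking the quotient by $P_\kappa$ and preserve injectivity; this is where the unramifiedness of $\mathcal R_{P_\kappa}$ over $\Lambda_{\p_\kappa}$ is used, exactly as in the indefinite argument. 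No new analytic input beyond Lemma \ref{prop-kernel-def} is required.
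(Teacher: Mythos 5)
Your proposal follows the paper's own proof essentially step for step: define $\p_\kappa:=P_\kappa\cap\Lambda$, feed Lemma \ref{prop-kernel-def} through the two flat base-changes (localization at $\p_\kappa$, then the unramified hence flat extension $\mathcal R_{P_\kappa}/\Lambda_{\p_\kappa}$ via \cite[Corollary 1.4]{hida-inv}), identify $S_k(U_r,F_f)\otimes_{F_f}F_\kappa\simeq S_k(U_r,F_\kappa)$, and restrict to the $h_\kappa$-eigenmodule. This is exactly the argument given in the paper (which itself defers the details to the proof of Proposition \ref{propA4}), so nothing further is needed.
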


\begin{proof} Define $\p_\kappa:=P_\kappa\cap\Lambda$. As in the proof of Proposition \ref{propA4}, Lemma \ref{prop-kernel-def} ensures the existence of an injective homomorphism of $\Lambda_{\p_\kappa}/\p_\kappa\Lambda_{\p_\kappa}$-vector spaces 
\[ \rho_{n,\epsilon}^\ord:\W_{\Lambda_{\p_\kappa}}^\ord/\p_\kappa\W_{\Lambda_{\p_\kappa}}^\ord\;\longmono\;S_k(U_r,F_f)^\ord. \]
Recall that $\mathcal R_{P_\kappa}$ and $\Lambda_{\p_\kappa}$ are normal domains and that $\mathcal R_{P_\kappa}$, being unramified over $\Lambda_{\p_\kappa}$ thanks to \cite[Corollary 1.4]{hida-inv}, is flat over $\Lambda_{\p_\kappa}$. Now 
\[ S(U_r,F_f)\otimes_{F_f}F_\kappa\simeq S_k(U_r,F_\kappa), \] 
from which we easily deduce the result. \end{proof}

Now recall that $\mathcal L$ (respectively, $\mathcal K$) is the fraction field of $\Lambda$ (respectively, $\mathcal R$) and there is a canonical decomposition 
\begin{equation} \label{decomp-def}
\mathfrak h_\infty^{D,\ord}\otimes_\Lambda\mathcal L\simeq\mathcal K\oplus\mathcal N
\end{equation}
where $\mathcal N$ is a direct sum of finitely many fields plus a non-reduced part. As in \S \ref{sec-CT-ind}, consider the composition
\[ h_\mathcal R:{\mathfrak h^D_{\rm univ}}\overset{h}\longrightarrow\mathfrak h_\infty^D\longrightarrow\mathfrak h_\infty^{D,\ord}\overset{f_\infty}\longrightarrow\mathcal R. \]

\begin{proposition} \label{prop-2-def}
The module $\W^\ord_\mathcal K$ is a $1$-dimensional vector space over $\mathcal K$. Furthermore, the action of $\mathfrak h^D_{\rm univ}$ on $\W^\ord_\mathcal K$ factors through $h_\mathcal R$.  
\end{proposition}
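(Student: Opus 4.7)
The plan mirrors the proof of Proposition \ref{prop-2} in the indefinite case, with Proposition \ref{prop3.3} playing the role of the Matsushima--Shimura rank-one statement (Proposition \ref{7.2-hida}) used there. Two simplifications appear relative to the indefinite setting: there is no archimedean involution $\iota$ to keep track of, and consequently the rank over the primitive component $\mathcal K$ will be $1$ rather than $2$.

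First, I would introduce the $p$-divisible $\mathcal O_f$-module
\[ \mathcal V:=\varinjlim_r \tilde S_2(U_r,F_f/\mathcal O_f)^\ord, \]
where $\tilde S_2$ is as in the proof of Proposition \ref{prop3.3} (so that the action of $\mathfrak h_r^D$ is faithful) and the direct limit is taken with respect to the natural inclusions induced by the containments $U_{r+1}\subset U_r$. The quaternionic analog of \cite[Corollary 10.4]{hida} then shows that the Pontryagin dual $V$ of $\mathcal V$ is free of finite rank over $\Lambda$. By construction, $V$ is canonically identified with $\varprojlim_r \tilde S_2(U_r,\mathcal O_f)^\ord$, which in turn coincides (up to the trivial weight-$2$ summand, killed by ordinarity) with $\W^\ord$ as an $\mathfrak h^D_\infty$-module.

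Next, by mimicking the proof of \cite[Theorem 12.1]{hida} with the indefinite rank-one input replaced by Proposition \ref{prop3.3}, I would deduce that for every $\kappa\in\mathcal A(\Lambda)$ there is an isomorphism
\[ V\otimes_\Lambda\Lambda_{P_\kappa}\simeq\mathfrak h_\infty^{D,\ord}\otimes_\Lambda\Lambda_{P_\kappa} \]
of $\mathfrak h_\infty^{D,\ord}\otimes_\Lambda\Lambda_{P_\kappa}$-modules. Thus $\W^\ord\otimes_\Lambda\Lambda_{P_\kappa}$ is free of rank $1$ over $\mathfrak h_\infty^{D,\ord}\otimes_\Lambda\Lambda_{P_\kappa}$ for every arithmetic prime of $\Lambda$; tensoring with $\mathcal L$ and invoking the decomposition \eqref{decomp-def}, one concludes that $\W^\ord_\mathcal K$ is $1$-dimensional over $\mathcal K$. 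The second assertion is then automatic: since $\mathcal K$ is the primitive component of $\mathfrak h_\infty^{D,\ord}\otimes_\Lambda\mathcal L$ through which $f$ factors and $\mathcal R$ is the integral closure of $\Lambda$ in $\mathcal K$, the structure map of $\mathfrak h^D_{\rm univ}$ on $\W^\ord_\mathcal K$ agrees with the composition $h_\mathcal R$ followed by the inclusion $\mathcal R\hookrightarrow\mathcal K$.

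The main obstacle will be verifying the two Hida-style inputs in the quaternionic \emph{definite} setting: the finiteness and $\Lambda$-freeness of $V$ (the analog of \cite[Corollary 10.4]{hida}) and the mod-$P_\kappa$ identification with $\mathfrak h_\infty^{D,\ord}$ (the analog of \cite[Theorem 12.1]{hida}). Hida's indefinite argument realizes $H^1$ as a $\Lambda$-adic cohomology module via ordinary projectors; in the definite case one must instead work directly with the spaces of functions on the finite double-coset sets $X(U_r)$, using compatibility of the ordinary projector with the maps between levels and using Proposition \ref{prop3.3} to pass from the fiberwise rank-one statement at each $\kappa$ to the $\Lambda$-adic structural result. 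Once this control is in place, tensoring with $\mathcal L$ and cutting by the idempotent for $\mathcal K$ yields the proposition.
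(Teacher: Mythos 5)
Your proposal follows essentially the same route as the paper's proof: define $\mathcal V$ as the ordinary direct limit of the $S_2(U_r,F_f/\mathcal O_f)$, identify its Pontryagin dual with $\W^\ord\simeq\varprojlim_r S_2(U_r,\mathcal O_f)^\ord$, invoke the quaternionic analogues of Hida's Corollary 10.4 and Theorem 12.1 with Proposition \ref{prop3.3} supplying the rank-one input in place of the Matsushima--Shimura statement, and then cut out the component $\mathcal K$ via the decomposition \eqref{decomp-def}. The only small discrepancy is your claim that the trivial weight-$2$ summand is ``killed by ordinarity'': what actually removes it is the restriction to the non-Eisenstein component $\mathcal K$ (recall $f$ is assumed residually absolutely irreducible), but this does not affect the argument.
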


\begin{proof} Define the $p$-divisible abelian group 
\[ \mathcal V:=\varinjlim_r S_2(U_r,F_f/\mathcal O_f)^\ord \]
where the direct limit is induced by the maps $U_r\subset U_{r-1}$. The Hecke operators $T_n$, $T_{n,n}$ and the involution $\iota$ act naturally on $\mathcal V$, since the Hecke action is compatible with the restriction maps (see \cite[(2.9 a,b) and (3.5)]{hida}). Define $V$ to be the Pontryagin dual of $\mathcal V$ and note that 
\[ V=\varprojlim_r S_2(U_r,\mathcal O_f)^\ord\simeq\W^\ord. \]
Thanks to \cite[Corollary 10.4]{hida}, we know that $V$ is free of finite rank over $\Lambda$. Using Proposition \ref{prop3.3} in place of Proposition \ref{7.2-hida}, one can proceed as in the proof of Proposition \ref{prop-2} to show that $\W_\mathcal L^\ord$ is free of rank $1$ over $\mathfrak h_\infty^{D,\ord}\otimes_\Lambda\mathcal L$, and we are done thanks to decomposition \eqref{decomp-def}. \end{proof}

Now we can prove the analogue of Theorem \ref{control-thm} in the definite setting.

\begin{theorem} \label{control-thm-def}
For every $\kappa\in\mathcal A(\mathcal R)$ the map $\rho_\kappa$ of Proposition \ref{propA4-def} induces an isomorphism of $1$-dimensional $F_\kappa$-vector spaces
\[ \rho_\kappa:\W^\ord_{h_\kappa}/P_\kappa\W_{h_\kappa}^\ord\overset\simeq\longrightarrow\W^\ord_\kappa. \]
\end{theorem}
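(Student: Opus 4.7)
The plan is to follow the same strategy used for Theorem \ref{control-thm} in the indefinite setting, suitably simplified because the archimedean involution $\iota$ plays no role here and all eigenspaces are already one-dimensional. First, Proposition \ref{propA4-def} already supplies the injectivity of
\[ \rho_\kappa:\W^\ord_{h_\kappa}/P_\kappa\W_{h_\kappa}^\ord\;\longmono\;\W^\ord_\kappa, \]
so the only thing left is to compare $F_\kappa$-dimensions on the two sides.

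For the right-hand side, I would recall the observation made just before Proposition \ref{propA4-def}: because $f_\kappa$ is a (possibly $p$-stabilized) newform, the $f_\kappa$-isotypic component inside the ordinary part of $S_k(U_r,F_\kappa)$ is one-dimensional over $F_\kappa$, so $\dim_{F_\kappa}\W_\kappa^\ord=1$. Thus it suffices to prove the opposite inequality
\[ \dim_{F_\kappa}\bigl(\W_{h_\kappa}^\ord/P_\kappa\W_{h_\kappa}^\ord\bigr)\geq 1. \]

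For the left-hand side, I would invoke Proposition \ref{prop-2-def}, which states that $\W^\ord_\mathcal K$ is a $1$-dimensional $\mathcal K$-vector space on which $\mathfrak h^D_{\rm univ}$ acts through $h_\mathcal R$. Localizing at $P_\kappa$, the $h_\kappa$-eigencomponent of $\W^\ord_{\mathcal R_{P_\kappa}}$, namely $\W^\ord_{h_\kappa}$, is then free of rank one over $\mathcal R_{P_\kappa}$; indeed, after inverting the elements of $\Lambda\setminus\p_\kappa$ one recovers the one-dimensional $\mathcal K$-eigenspace, and $\mathcal R_{P_\kappa}$ is a discrete valuation ring flat over $\Lambda_{\p_\kappa}$ (by \cite[Corollary 1.4]{hida-inv}), so the natural $\mathcal R_{P_\kappa}$-module structure on $\W_{h_\kappa}^\ord$ is free of rank one. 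Quotienting by the maximal ideal $P_\kappa\mathcal R_{P_\kappa}$ yields a quotient of dimension $1$ over $F_\kappa=\mathcal R_{P_\kappa}/P_\kappa\mathcal R_{P_\kappa}$, which is the required lower bound.

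Combining the injectivity of $\rho_\kappa$ with the equality $\dim_{F_\kappa}\W^\ord_\kappa=\dim_{F_\kappa}\bigl(\W_{h_\kappa}^\ord/P_\kappa\W_{h_\kappa}^\ord\bigr)=1$ forces $\rho_\kappa$ to be an isomorphism. The main conceptual step, and the only potential obstacle, is the rank-one freeness of $\W^\ord_{h_\kappa}$ over $\mathcal R_{P_\kappa}$; however, this is a routine consequence of Proposition \ref{prop-2-def} and the flatness result of Hida cited above, so no new input beyond the preceding machinery is required.
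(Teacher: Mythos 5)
Your proposal is correct and follows essentially the same route as the paper: injectivity from Proposition \ref{propA4-def}, the observation that $\dim_{F_\kappa}\W_\kappa^\ord=1$, and the lower bound on the source obtained from Proposition \ref{prop-2-def} via the rank-one freeness of the $h_\kappa$-eigenmodule over $\mathcal R_{P_\kappa}$. The extra detail you supply about localizing at $\p_\kappa$ and using the flatness of $\mathcal R_{P_\kappa}$ over $\Lambda_{\p_\kappa}$ is exactly the content the paper compresses into its final sentence.
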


\begin{proof} Thanks to Proposition \ref{propA4-def}, we only need to show the inequality
\[ \dim_{F_\kappa}(\W_{h_\kappa}^\ord/P_\kappa\W_{h_\kappa}^\ord)\geq\dim_{F_\kappa}(\W_\kappa^\ord). \] 
Since $\dim_{F_\kappa}(\W_\kappa^\ord)=1$, we are reduced to proving that 
\[ \dim_{F_\kappa}(\W_{h_\kappa}^\ord/P_\kappa\W_{h_\kappa}^\ord)\geq1. \] 
However, Proposition \ref{prop-2-def} shows that the intersection of $\W_{\mathcal R_\kappa}^\ord$ with the $h_\mathcal R$-eigenmodule of $\W^\ord_\mathcal K$ is a free $\mathcal R_{P_\kappa}$-module of rank $1$, which completes the proof. \end{proof}

\end{document}